\numberwithin{equation}{section} 
\newtheorem{theorem}{Theorem}
\newtheorem{lemma}[theorem]{Lemma}
\newtheorem{fact}[theorem]{Fact}
\newtheorem{definition}[theorem]{Definition}
\newtheorem{remark}[theorem]{Remark}
\title{Degrees of Freedom for Critical Random 2-SAT}
\author[, 1]{Andreas Basse-O'Connor, \thanks{ basse@math.au.dk}}
\author[, 1, 2]{Mette Skjøtt, \thanks{ metteskjott@math.au.dk}}
\affil[1]{Department of Mathematics, Aarhus University, Denmark}
\affil[2]{Kvantify ApS, DK-2300 Copenhagen S, Denmark}
\date{}
\newcommand{\inSAT}{\in\text{\texttt{SAT}}}
\newcommand{\rr}[1]{^{(#1)}}
\newcommand\eqdist{\mathrel{\stackrel{\makebox[0pt]{\mbox{\normalfont\tiny $\mathcal{D}$}}}{=}}}
\newcommand{\sign}{\text{sgn}}
\newcommand{\ind}{\perp\!\,\,\!\!\!\!\perp}
\begin{document}

% Title page
\maketitle

\begin{abstract}
The random $k$-SAT problem serves as a model that represents the 'typical' $k$-SAT instances. This model is thought to undergo a phase transition as the clause density changes, and it is believed that the random $k$-SAT problem is primarily difficult to solve near this critical phase. In this paper, we introduce a weak formulation of degrees of freedom for random $k$-SAT problems and demonstrate that the critical random $2$-SAT problem has $\sqrt[3]{n}$ degrees of freedom. This quantity represents the maximum number of variables that can be assigned truth values without affecting the formula’s satisfiability. Notably, the value of $\sqrt[3]{n}$ differs significantly from the degrees of freedom in random $2$-SAT problems sampled below the satisfiability threshold, where the corresponding value equals $\sqrt{n}$. Thus, our result underscores the significant shift in structural properties and variable dependency as satisfiability problems approach criticality.
\end{abstract}

\newpage

\section{Introduction}

\subsection{Background and motivation}
The Boolean satisfiability problem (SAT) is a highly studied topic in computer science, notable for being the first problem proven to be NP-complete, see \cite{Cook71}. Its versatility extends beyond theoretical interest, with practical applications in areas like artificial intelligence, software verification, and optimization (see ~\cite{Marques_2008, Marco_2006, Knuth_2015}, and references therein). In recent years, SAT has also attracted significant attention in the fields of discrete probability and statistical physics. This interdisciplinary interest arises because SAT exhibits behaviors, such as phase transitions, making it a compelling subject for studying threshold behavior in combinatorial structures.

A SAT instance is a Boolean function that evaluates multiple Boolean variables and returns a single Boolean value. The function is typically expressed in conjunctive normal form (CNF), meaning it is a conjunction (\texttt{and}) of disjunctions (\texttt{or}) of literals. Each literal represents a variable or its negation. A formula in which every clause contains exactly $k$ literals is called a $k$-CNF formula. The following is an example of a $2$-CNF formula with four variables and five clauses:
\[\varphi(x)=
(x_1 \lor  x_2) \land (\neg x_2 \lor x_3) \land (\neg x_3\lor x_4) \land (\neg x_1 \lor \neg x_4)\wedge (x_2\lor  x_4),\quad (x\in  \{\texttt{true},\texttt{false}\}^4).
\]
The only assignment that makes the above formula evaluate to \texttt{true} is $(\texttt{false}, \texttt{true}, \texttt{true}, \texttt{true})$. The objective of the satisfiability problem is to determine whether such an assignment exists; if so, we write $\varphi \in \text{SAT}$. In the context of computational complexity theory, the $2$-SAT problem is NL-complete, meaning it can be solved non-deterministically with logarithmic storage space and is one of the most difficult problems within this class (see Thm.~16.3 in \cite{papadimitriou94}). Consequently, a deterministic algorithm that solves $2$-SAT using only logarithmic space would imply $L = NL$, which is a standing conjecture. For $k \geq 3$, the $k$-SAT problem is NP-complete, situating it at the core of the famous $P$ vs. $NP$ conjecture.

In practical applications, SAT instances are, in most cases, easily solvable, which appears to contradict the problem's computational hardness. This observation inspired the development of the random $k$-SAT model, designed to generate typical SAT instances, see \cite{Goldberg79, Cheeseman91, Kirkpatrick94, Gent94}. In this model, the number of input variables $n$, clauses $m$, and the clause size $k$ are fixed. Clauses are then sampled independently and uniformly from the $2^k\binom{n}{k}$ clauses with non-overlapping variables. This model is called the random $k$-SAT model, and the distribution is denoted $F_k(n,m)$. This model becomes particularly interesting when $n$ and $m$ grow large simultaneously. Specifically, by setting $m = \lfloor \alpha n \rfloor$, where $\alpha > 0$ represents the clause density, the random $k$-SAT problem is believed to undergo a phase transition: the asymptotic probability of satisfiability shifts from one to zero as $\alpha$ surpasses a critical threshold $\alpha_k$, that is for $k\geq 2$,
\begin{align} \label{eq satisfiability conjecture}
\lim_{n\rightarrow\infty}\mathbb{P}\big(F_k(n,\lfloor \alpha n\rfloor)\inSAT\big)=\begin{cases}
        1,\quad&\alpha<\alpha_k,\\
        0,\quad &\alpha>\alpha_k.
    \end{cases}
\end{align}
A random $k$-SAT problem that is satisfiable w.h.p.\ is referred to as under-constrained, while it is called over-constrained when it is unsatisfiable w.h.p. Furthermore, when a phase transition exists, problems at this critical value are referred to as being critical. 

As previously discussed, SAT problems are computationally challenging. Notably, it is near the expected phase transition of the random $k$-SAT model that the hardest instances are thought to arise, see \cite{Selmanetal1996}. Figure \ref{fig} displays how a spike in computational hardness appears when the clause density approaches the expected phase transition. This highlights why understanding the behavior of random $k$-SAT in this critical region is of substantial theoretical and practical importance. More broadly, the study of random structures near critical transitions is a significant and complex area of research. The prominence of this field is underscored by the fact that three Fields Medals have been awarded since 2006 for groundbreaking work on critical phenomena, with recipients including H. Duminil-Copin, S. Smirnov, and W. Werner.
\begin{figure}
\centering
\includegraphics[scale=0.1]{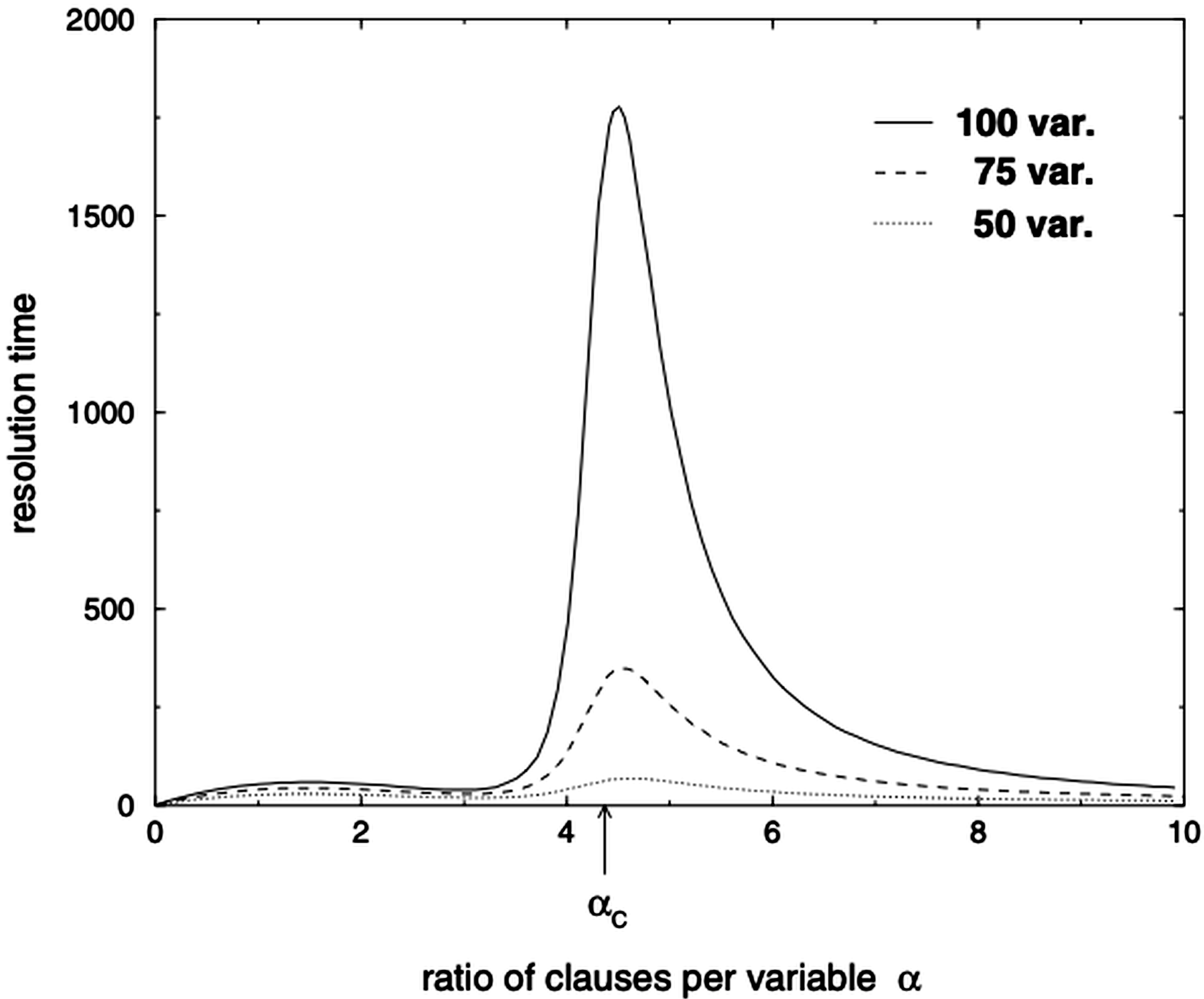}
\caption{Computational hardness of random $3$-SAT as a function of clause density. The y-axis displays the median resolution time of 10,000 instances solved using the DPLL algorithm. Credit: Fig. 8.2 in \cite{Biroli02}}
\label{fig}
\end{figure}

The phase transition phenomenon was in 1992 established for $k=2$ in the articles \cite{Goerdt_1996, Chvatal_1992, Fernandez_2001}, where the authors independently established that $\alpha_2 = 1$.  Recently, the sharp satisfiability conjecture (\ref{eq satisfiability conjecture}) has been affirmatively verified for all $k \geq k_0$, with $k_0$ being a large and unknown constant, see \cite{SSZ21}. The remaining cases of $k$ constitute an open problem.  In 1999, the result on random $2$-SAT was further refined in \cite{Bollobas_2001} as the rate of convergence was determined. Additionally, it was shown that the asymptotic probability of satisfiability of a random critical $2$-SAT problem is bounded away from both zero and one, though whether this probability converges remains an open question. Recent contributions to the random $2$-SAT model have focused on the under-constrained regime, where both the expected number of solutions and a central limit theorem for this quantity (see \cite{Noela2021, Doja-Ochlan2024}) has been established. Thus, while the phase transition of random $2$-SAT was proven many years ago, ongoing research continues to uncover new insights into the model, and several open questions remain unresolved. 

A recent study \cite{Basse_2024} examined variable interactions by analyzing the \emph{degrees of freedom} in under-constrained random $k$-SAT problems. This concept refers to the number of variables that can be fixed without impacting the formula's satisfiability. For under-constrained random $2$-SAT problems, where $\alpha < 1$, the degrees of freedom equal $n^{1/2}$, while in random $3$-SAT problems well below the phase transition ($\alpha<3.145$), the degrees of freedom equal $n^{2/3}$.

In this paper, we compute the degrees of freedom in \emph{critical} $2$-SAT problems. Our result shows that in this critical setting, the degrees of freedom decrease with a polynomial factor, scaling only as $n^{1/3}$.  This finding underscores the emergence of complex structures near the phase transition, where variable interdependencies become significantly more pronounced. Thus, our results highlight this marked shift in variable correlation as random SAT problems approach criticality.

\subsection{Main result} \label{section main result}

Consider a random $2$-CNF formula $\Phi$ sampled at the phase-transition point of the random $2$-SAT problem, where the asymptotic probability of satisfiability shifts from one to zero. We aim to determine how many input variables are free—that is, they can be assigned any value without effecting the asymptotic probability that the formula is satisfiable.

Let $\mathcal{L} \subseteq \pm [n] := \{-n, \dotsc, -1, 1, \dotsc, n\}$ be a set with $|\mathcal{L}| = f(n)$ elements, chosen such that if $\ell \in \mathcal{L}$, then $-\ell \notin \mathcal{L}$ (we say that $\mathcal{L}$ is \emph{consistent}). This set $\mathcal{L}$ dictates the variables being fixed, having $x_v = \texttt{true}$ when $v \in \mathcal{L}$ and $x_v = \texttt{false}$ when $-v \in \mathcal{L}$. Formally, let $\mathbb{B} = \{\texttt{true}, \texttt{false}\}$. For $x \in \mathbb{B}^n$, we define $x_\mathcal{L}\in\mathbb{B}^n$ as the vector with $(x_\mathcal{L})_v = \texttt{true}$ when $v \in \mathcal{L}$, $(x_\mathcal{L})_v = \texttt{false}$ when $-v \in \mathcal{L}$, and $(x_\mathcal{L})_v = x_v$ for all other entries. We then consider
\begin{equation} \label{eq Phi_L}
    \Phi_\mathcal{L}(x) = \Phi(x_\mathcal{L}).
\end{equation}
Note that $\Phi_\mathcal{L}$ denotes the mapping $\Phi$ with $f$ variables fixed to values specified by $\mathcal{L}$. Our goal is to identify the threshold value of $f$ that separates instances where $\Phi_\mathcal{L}$ remains solvable with positive probability from those where $\Phi_\mathcal{L}$ becomes unsatisfiable. To formalize this notion, we introduce the following definition, where we recall that $F_k(n, m)$ denotes a random $k$-CNF formula with $n$ variables and $m$ clauses.
\begin{definition} \label{def degrees of freedom}
    The random $k$-SAT problem with clause density $\alpha > 0$ is said to have $f_\star(n)$ degrees of freedom weakly if, for $\Phi \sim F_k(n, \lfloor \alpha n \rfloor)$, every consistent subset $\mathcal{L} \subseteq \pm [n]$ with $|\mathcal{L}| = f(n)$, and for all $\varepsilon > 0$, the following holds:
    \begin{enumerate}[label=(\arabic*)]
        \item Whenever $f= O(f_\star n^{-\varepsilon})$, then $$\liminf_{n\rightarrow\infty}\mathbb{P}(\Phi_\mathcal{L}\inSAT)=\liminf_{n\rightarrow\infty}\mathbb{P}(\Phi\inSAT)>0.$$
        \item Whenever $f =  \Omega(f_\star n^\varepsilon)$, then $$\lim_{n\rightarrow\infty}\mathbb{P}(\Phi_\mathcal{L}\inSAT)=0.$$
    \end{enumerate}
\end{definition}
Condition (1) states that fixing strictly fewer than $f_\star$ variables does not decrease the lower bound on the probability of satisfiability. On the other hand, condition (2) implies that when fixing strictly more than $f_\star$ variables, the problem becomes unsatisfied. This concept is a weaker form of the degrees of freedom notion introduced in \cite{Basse_2024}; specifically, having $f_\star$ degrees of freedom implies having $f_\star$ degrees of freedom weakly. Note that $f_\star$ is unique up to sub-polynomial factors, meaning that if both $f_\star$ and $g_\star$ are weak degrees of freedom, then for any $\varepsilon > 0$, we have $f_\star n^{-\varepsilon} \leq g_\star \leq f_\star n^{\varepsilon}$ for sufficiently large $n$. Our main result is the following:
\begin{theorem} \label{main theorem}
    The random critical $2$-SAT problem has $n^{1/3}$ degrees of freedom weakly. 
\end{theorem}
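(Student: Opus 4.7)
Let $G_\Phi$ be the implication digraph on literals $\pm[n]$: each clause $(\ell\vee\ell')$ of $\Phi$ contributes the edge pair $-\ell\to\ell'$ and $-\ell'\to\ell$, which endows $G_\Phi$ with the antisymmetry $\ell\to\ell'\iff -\ell'\to-\ell$. Viewing $\mathcal{L}$ as unit clauses, $2$-SAT unit propagation together with the Aspvall--Plass--Tarjan criterion gives that $\Phi_\mathcal{L}\inSAT$ if and only if $\Phi\inSAT$ and the forward-reachable set $R(\mathcal{L})\subseteq\pm[n]$ in $G_\Phi$ is \emph{consistent}, i.e.\ contains no $\ell^*$ together with $-\ell^*$; once consistent, the residual formula's implication digraph is an induced subgraph of $G_\Phi$ and hence inherits satisfiability from $\Phi$. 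The antisymmetry lets us simplify the consistency condition: whenever $\ell^*\in R(\ell_1)$ and $-\ell^*\in R(\ell_2)$ for some $\ell_1,\ell_2\in\mathcal{L}$, reversing and negating the path $\ell_1\to\ell^*$ produces $-\ell^*\to-\ell_1$, which composes with $\ell_2\to-\ell^*$ to yield the direct path $\ell_2\to-\ell_1$. Defining
\[
N := \bigl|\{(\ell_1,\ell_2)\in\mathcal{L}^2 : \ell_1\to-\ell_2\text{ in }G_\Phi\}\bigr|,
\]
we therefore have $\{\Phi_\mathcal{L}\inSAT\} = \{\Phi\inSAT\}\cap\{N=0\}$, so $\Prob(\Phi_\mathcal{L}\inSAT)\leq\Prob(N=0)$ and $\Prob(\Phi_\mathcal{L}\inSAT)\geq\Prob(\Phi\inSAT)-\Prob(N\geq 1)$.

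\textbf{Moments at criticality and the two directions.} At $\alpha=1$ the digraph $G_\Phi$ has $2n$ vertices with expected in- and out-degree $1$, so the forward exploration from a literal is a critical Galton--Watson process whose total progeny $T$ is heavy-tailed with $\Prob(T\geq k)\asymp k^{-1/2}$ on the scaling window $k\lesssim n^{2/3}$. By the symmetry of the clause distribution across the literal set, this translates into the uniform estimates
\[
\Prob(\ell\to\ell'\text{ in }G_\Phi)=\Theta(n^{-2/3}),\qquad \E|R(\ell)|^2=\Theta(n),
\]
from which $\E N = \Theta(f^2 n^{-2/3})$, revealing $f\asymp n^{1/3}$ as the critical scale. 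For $f=O(n^{1/3-\varepsilon})$, Markov gives $\Prob(N\geq 1)\to 0$; hence $\liminf_n\Prob(\Phi_\mathcal{L}\inSAT)\geq\liminf_n\Prob(\Phi\inSAT)>0$ by \cite{Bollobas_2001}, and the trivial reverse inequality turns this into the required equality. For $f=\Omega(n^{1/3+\varepsilon})$, decomposing $\E N^2$ by the overlap pattern of the two sampled pairs (disjoint pairs contribute $(\E N)^2$ by approximate independence of far-apart explorations, while pairs sharing one coordinate are bounded via $\E|R(\ell)|^2=\Theta(n)$) yields $\Var N=O(f^2/n^{2/3}+f^3/n)$, so $\Var N/(\E N)^2=O(n^{-\varepsilon})\to 0$ and Chebyshev forces $\Prob(N=0)\to 0$; thus $\Prob(\Phi_\mathcal{L}\inSAT)\to 0$.

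\textbf{Main obstacle.} The technical heart is the critical reachability estimate $\Prob(\ell\to\ell')=\Theta(n^{-2/3})$, together with the companion moment bound $\E|R(\ell)|^2=\Theta(n)$ and the near-independence of explorations launched from distinct starting literals needed in the variance calculation. The branching-process heuristic is transparent, but $G_\Phi$ does not fit a standard configuration-model framework cleanly because of the antisymmetric edge-pair constraint coming from each clause (each edge $u\to v$ is forced to coexist with $-v\to-u$). Making the estimates rigorous will require an exploration-based analysis tailored to the random implication digraph at criticality, building on the critical-scaling analysis that underlies the rate-of-convergence and non-trivial satisfiability probability of \cite{Bollobas_2001}, and possibly complemented by techniques from the solution-counting literature \cite{Noela2021,Doja-Ochlan2024}.
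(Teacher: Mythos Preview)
Your reduction to the reachability count $N$ via the implication digraph is correct and clean: the equivalence $\{\Phi_\mathcal{L}\inSAT\}=\{\Phi\inSAT\}\cap\{N=0\}$ is valid, and the antisymmetry argument reducing consistency of $R(\mathcal{L})$ to $N=0$ is fine. This is a genuinely different organization from the paper's, which never passes through the implication digraph but instead tracks the unit-propagation process round by round, controlling the width $M_1^{(r)}$ of the BFS front via supermartingale/optional-stopping arguments (for $q<1/3$) and a delicate Markov-chain stability analysis (for $q>1/3$). In effect the paper studies the \emph{breadth} of the exploration per generation, whereas you study the \emph{total} reachable set and reduce everything to moments of $N$.

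The gap is the one you yourself flag: the estimates $\Prob(\ell\to\ell')=\Theta(n^{-2/3})$ and $\E|R(\ell)|^2=\Theta(n)$, together with the near-independence needed in the variance bound, are asserted from a critical Galton--Watson heuristic but not proved. These are not cosmetic details---they are the entire technical content of the theorem. The implication digraph is not a standard $D(n,1/n)$ model because of the forced edge-pair constraint $(-\ell\to\ell')\Leftrightarrow(-\ell'\to\ell)$, so one cannot simply cite critical random-digraph results. In particular, the second-moment step is more fragile than your write-up suggests: at criticality, forward explorations from distinct literals can both intersect the same large-scale structure (the analogue of the scaling-window component), so ``approximate independence of far-apart explorations'' requires a real argument, not an appeal to intuition. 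The references you cite (\cite{Bollobas_2001}, \cite{Noela2021}, \cite{Doja-Ochlan2024}) do not deliver these specific estimates off the shelf; extracting them would take work of the same order as the paper's direct analysis. As written, then, this is a plausible proof \emph{strategy} with the main theorem-level difficulty deferred, rather than a proof.

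If you can establish just the upper bound $\E|R(\ell)|=O(n^{1/3})$ rigorously, your first-moment argument for part~(1) of Definition~\ref{def degrees of freedom} would indeed be shorter than the paper's route through Lemmas~\ref{lemma under-constrained alternative result}--\ref{lemma five limits}. For part~(2), however, the second-moment route does not obviously save effort over the paper's approach of showing that $M_1^{(r)}$ stays of order $n^q$ for $\Theta(n^{1-2q}\log n)$ rounds and that an empty clause is generated along the way.
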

We recall that \emph{critical} refers to the situation with $\alpha=1$. Figure \ref{fig2} shows simulations indicating that, as $n$ increases, the curve representing the satisfiability of the random critical $2$-SAT problem as a function of the number of fixed variables becomes increasingly steep. Moreover, this steepening behavior points to a cutoff occurring at $n^{1/3}$. 
\begin{figure}[ht]
\centering
\includegraphics[scale=0.7]{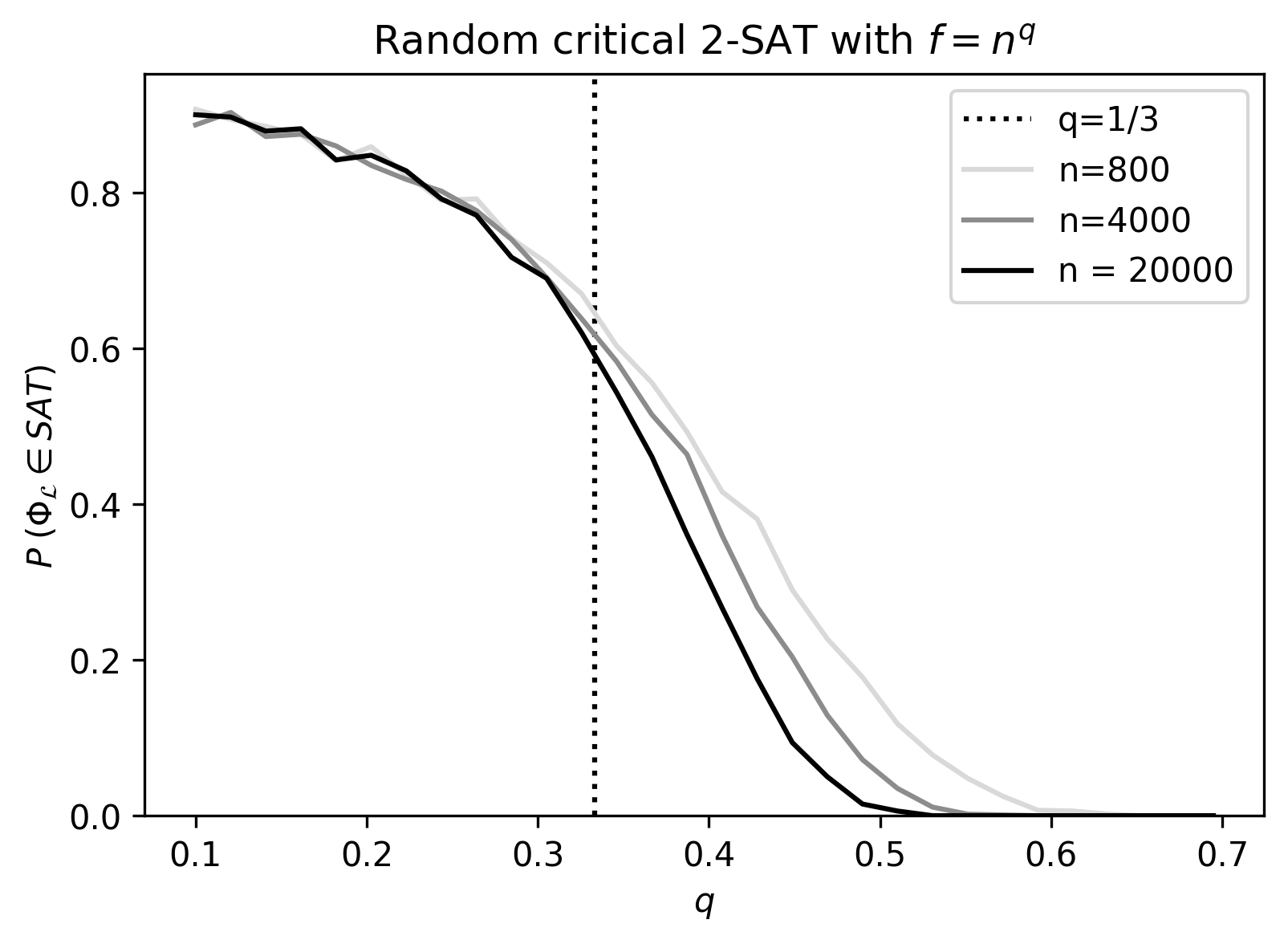}
\caption{Satisfiability of random critical $2$-SAT as a function of the number of fixed variables. The different curves represent a varying number of input variables. Each data point is comprised of $2,000$ simulations. The vertical dotted line indicates $q=1/3$.}
\label{fig2}
\end{figure}

\subsection{Related work}
In this section, we compare our results to related work, providing new insights and situating our findings within a broader context.
\begin{remark}
Theorem \ref{main theorem} allows us to compare the critical random $2$-SAT problem with general random $k$-SAT problems:
 \begin{itemize}
     \item The paper \cite{Basse_2024} established that under-constrained random $2$-SAT problems have $n^{1/2}$ degrees of freedom, which reveals a pronounced difference with the behavior observed at the critical phase-transition point. At this threshold, a dramatic reduction in degrees of freedom occurs, reflecting a fundamental shift in the underlying structure of the formula.  This is not surprising, 
as at this critical ratio, the system is on the "knife edge" between 
being satisfiable versus unsatisfiable, and therefore, long-range correlations between variables are expected to appear. To our knowledge, our result is one of the first to indicate this drastic change in variable dependence.
\item The paper \cite{Basse_2024} also examines random $3$-SAT problems, establishing that when $\alpha$ is significantly below the expected phase-transition threshold ($\alpha < 3.145$), the degrees of freedom are $n^{2/3}$. In comparison, our main theorem shows that the degrees of freedom in critical random $2$-SAT equal the square root of this amount, indicating a notable contrast in variable flexibility between the two cases.
 \end{itemize}
\end{remark}

The computational hardness of the satisfiability problem implies that finding solutions to challenging SAT formulas often requires traversing a substantial portion of the search tree, that is, assigning truth values to variables sequentially and backtracking when encountering contradictions. This approach forms the core of the DPLL algorithm, introduced in 1962 as one of the first SAT-solving algorithms, \cite{Davis62}. Decades later, in the 1990s, CDCL (Conflict-Driven Clause Learning) solvers transformed SAT solving, enabling the solution of instances with thousands or even millions of variables. Despite their modern enhancements, these solvers still rely on the simple procedure of assigning truth values and backtracking (see p. 62 in \cite{Knuth_2015}). The concept of degrees of freedom quantifies how deep one can navigate in the search tree before a contradiction arises when solving a random SAT problem. Moreover, the drastic change in degrees of freedom when comparing under-constrained problems with critical problems highlights why computational complexity intensifies near the satisfiability threshold. This also aligns with the observations in Figure \ref{fig}, which displayed the computation time of the DPLL algorithm when approaching criticality. 

Let again $\Phi\sim F_2(n,n)$, $\mathcal{L}\subseteq\pm[n]$ be consistent with $|\mathcal{L}|=f(n)$, and remember that fixing variables corresponds to shrinking the input space. Thus, it is clear
that $\{\Phi_{\mathcal{L}} \inSAT \} \subseteq \{\Phi \inSAT \}$. This along with our main theorem implies that whenever $f=O(n^{1/3-\varepsilon})$ for an $\varepsilon>0$ we have 
\begin{equation} \label{liminf<limsup<liminf<limsup}
    \liminf_{n \rightarrow \infty} \mathbb{P} \big(\Phi \inSAT \big) 
\leq \liminf_{n \rightarrow \infty} \mathbb{P} \big(\Phi_{\mathcal{L}} 
\inSAT \big)\leq \limsup_{n \rightarrow \infty} \mathbb{P} 
\big(\Phi_{\mathcal{L}} \inSAT \big) \leq \limsup_{n \rightarrow \infty} 
\mathbb{P} \big(\Phi \inSAT \big).
\end{equation}
Thus, if $\mathbb{P}(\Phi \inSAT)$ has a limit as $n\rightarrow\infty$, 
then $\mathbb{P}(\Phi_{\mathcal{L}} \inSAT)$ also has a limit, and these two limits coincide. In \cite{Bollobas_2001} it is shown that for all $\delta > 0$ sufficiently 
small, there exists a $c_\delta > 0$ such that if $\Phi_\alpha \sim F_2(n, \lfloor \alpha n\rfloor)$ with $\alpha \in [1 - c_\delta 
n^{-1/3}, 1 + c_\delta n^{-1/3}]$, then
\begin{equation} \label{eq interval of P(PhiinSAT)}
    \delta \leq \mathbb{P}(\Phi_\alpha \inSAT) \leq 1 - \delta.
\end{equation}
Moreover, this interval is the best possible 
in the sense that if a sufficiently large constant replaces $c_\delta$, 
the statement becomes false. Combining (\ref{liminf<limsup<liminf<limsup}) and (\ref{eq interval of P(PhiinSAT)}) we get that for $\delta > 0$ small 
enough
$$\delta \leq \liminf_{n \rightarrow \infty} \mathbb{P} 
\big(\Phi_{\mathcal{L}} \inSAT \big) \leq \limsup_{n \rightarrow \infty} 
\mathbb{P} \big(\Phi_{\mathcal{L}} \inSAT \big) \leq 1 - \delta,$$
so the limiting probability is bounded away from zero and one, and this interval is not larger than the corresponding interval for satisfiability when no variables are fixed. We observe that the length of the scaling window in \cite{Bollobas_2001} is on the order of $n^{-1/3}$, which is the reciprocal of the degrees of freedom for the critical 2-SAT problem. However, the proof presented in \cite{Bollobas_2001} differs from that of the current paper, and there is no direct coupling between the two results.

The main idea of the proof in \cite{Bollobas_2001} is to consider an order parameter for the phase transition of random $2$-SAT. This is a concept often used in statistical physics and it refers to a function that vanishes on one side of a transition and becomes non-zero on the other side. The order parameter that they consider is the average size of the spine, where the spine of a CNF-formula $\varphi$ is defined to be the set of literals $\ell$ for which there is a satisfiable sub-formula $\psi$ of $\varphi$ with $\psi\wedge\ell$ not satisfiable. By carefully controlling this quantity in a random CNF-formula as clauses are added one by one their result follows. Note that the size of the spine equals the number of variables that are free to be given any truth value without making a satisfiable SAT problem unsatisfiable. The spine only describes how each variable on its own affects the satisfiability of a CNF-formula. In contrast, we need to understand how all the fixed variables simultaneously impact the satisfiability of the formula. Multiple other papers, e.g. \cite{Chao86, Achlioptas01, Achlioptas01_1} also consider the procedure of fixing one single variable at a time, and in \cite{Achlioptas00} they consider fixing two variables at a time. This is different from the approach in the present paper where many variables are fixed simultaneously and hereby long implication chains emerge that intervene with each other and affect satisfiability. 

As previously mentioned, the paper \cite{Basse_2024} was the first to introduce and compute degrees of freedom in certain random under-constrained $k$-SAT problems. Their proof is based on the idea that fixing variables in a CNF-formula creates clauses of size one, also called unit-clauses. The presence of these unit-clauses, in turn, corresponds to further variable fixing. Thus, variables are fixed repeatedly in rounds, and the probability of encountering a contradiction in each round is calculated. The sequence describing the number of fixed variables throughout the rounds is then studied. This procedure is closely related to the unit-propagation algorithm, a well-studied technique used as a subroutine in most modern SAT solvers. We also base our proof on an appropriate adaptation of the unit propagation algorithm. In the under-constrained regime of random 2-SAT, it is possible to control the number of unit-clauses produced in each round $r$, and this number decreases exponentially at a rate of $\alpha$, i.e.\ as $\alpha^r$. However, at the phase transition, we have $\alpha = 1$, and thus the expected number of unit-clauses produced in each round remains approximately constant ($\alpha^r=1$). As a result, controlling unit propagation becomes more challenging because the entire process must be analyzed as a whole, unlike in the under-constrained regime, where the rounds could be considered independently. This again suggests the presence of long-range correlations between variables when $\alpha = 1$. When $f=\Omega(n^{1/3+\varepsilon})$ for some $\varepsilon>0$ the key idea is to show that the number of unit-clauses produced in each round remains high for a certain number of rounds w.h.p. This implies that a contradiction is likely to occur before the process terminates. On the other hand, when $f=O(n^{1/3-\varepsilon})$ for some $\varepsilon>0$ we show that the sequence dies out w.h.p.\ before encountering a contradiction.

The results of \cite{Basse_2024} extend further as they also determine the limiting satisfiability of the random SAT problem when $\Theta(f_\star)$ variables are fixed, where $f_\star$ represents the degrees of freedom of the random formula. In this setting, they show that the limiting probability remains bounded away from zero and one, and they provide the exact limiting value. By adjusting a parameter, this limiting value smoothly interpolates between the two edge cases. An open question is whether a similar result holds for the random critical $2$-SAT problem. Specifically, it remains unknown what happens when $\Theta(n^{1/3})$ variables are fixed in such formulas, and whether the limiting probability will also interpolate between the edge cases.

\section{Preliminaries}

\subsection{Notation and conventions}
For any set $A\subseteq\mathbb{Z}$ we define $-A=\{-a:a\in A\}$, $\pm A=A\cup(-A)$ and we denote by $|A|$ the number of elements in $A$. For elements $x_i$, $i\in A$ belonging to some space we let $(x_a)_{a\in A}$ denote the vector $(x_{a_1},\dotsc,x_{a_{|A|}})$, where $\{a_1,\dotsc,a_{|A|}\}= A$ and $a_1<a_2<\dotsb< a_{|A|}$. Furthermore, for any $n,m\in\mathbb{N}$ with $m<n$ we let $[n]=\{1,\dotsc,n\}$, $[m,n]=\{m,m+1,\dotsc,n\}$, and $[0]=\emptyset$. The two sets $\mathbb{B}=\{\texttt{true},\texttt{false}\}$ and $K=\{0,1,2,\star\}$ are also considered repeatedly. For an $x\in\mathbb{R}$ we let $x^+=\max\{0,x\}$. 

When considering random elements a probability space $(\Omega,\mathcal{F},\mathbb{P})$ will always be given.  Whenever new random elements are introduced, unless specified otherwise, they are independent of all previously existing randomness. We define $\frac{0}{0} = 0$. As we will ultimately let $n$ approach infinity, certain inequalities will hold only for sufficiently large $n$. In such cases, the required size of $n$ for the inequality to hold may depend on $q$, but it will always be independent of the round $r$. As has been the case thus far, $n$ is often omitted from the notation, even though most elements depend on this parameter.

\subsection{The random SAT-problem}
Let $n,m\in\mathbb{N}_0$ and $k\in\mathbb{N}$, where $n\geq k$ when $m>0$. The random $k$-SAT distribution was defined in section \ref{section main result}, but we will infer some additional notation needed for our proof. Firstly, we will specify the non-random case. When $m>0$ we let a $k$-clause over $n$ variables be a vector from the set
$$\mathcal{D}=\big\{(\ell_1,\dotsc,\ell_k)\in(\pm[n])^k:|\ell_1|<\dotsb<|\ell_k|\big\}.$$
The entries of such a vector are called the literals of the clause. Consider $m$ such clauses $(\ell_{j,i})_{i\in [k]}$, $j\in[m]$. From these clauses we define a $k$-SAT formula $\varphi$ with $n$ variables and $m$ clauses by letting
$$\varphi=\bigwedge_{j=1}^m\big(\ell_{j,1}\vee\dotsb\vee \ell_{j,k}\big).$$
We let the order of the clauses matter such that two formulas $\varphi$ and $\varphi'$ with literals $((\ell_{j,i})_{i\in [k]})_{j\in[m]}$ and $((\ell_{j,i}')_{i\in [k]})_{j\in[m]}$, respectively, are equal if and only if $\ell_{j,i}=\ell'_{j,i}$ for all $j\in[m]$ and $i\in[k]$. This implies a one-to-one correspondence between a formula and its (ordered set of) literals. Now, we define a mapping related to a SAT-formula. For $\ell\in\pm[n]$ we associate a mapping by letting
\begin{equation} \label{eq ell mapping}
\ell:\mathbb{B}^n\rightarrow\mathbb{B},\quad\text{where}\quad \ell:x=(x_1,\dotsc,x_n)\mapsto \begin{cases}
    \;\;\,x_{|\ell|},\quad &\text{if }\sign(\ell)=1,\\
    \neg\, x_{|\ell|},\quad&\text{if }\sign(\ell)=-1.
\end{cases}
\end{equation}
Letting $\wedge$ denote the logical \texttt{and} and $\vee$ denote the logical \texttt{or}, we associate $\varphi$ with the function mapping $\mathbb{B}^n$ to $\mathbb{B}$ that is given by
$$\varphi(x)=\bigg(\bigwedge_{j=1}^m\big(\ell_{j,1}\vee\dotsb\vee \ell_{j,k}\big)\bigg)(x)=\bigwedge_{j=1}^m\big(\ell_{j,1}(x)\vee\dotsb\vee \ell_{j,k}(x)\big),\quad x\in\mathbb{B}^n.$$
 We now define a distribution over the set of $k$-SAT formulas with $n$ variables and $m$ clauses and we denote this distribution by $F_k(n,m)$. Consider random vectors $(L_{j,i})_{i\in[k]}$, $j\in[m]$, that are uniformly distributed on $\mathcal{D}$. We say that these are random clauses. Furthermore, let
$$\Phi=\bigwedge_{j=1}^m\big(L_{j,1}\vee\dotsb\vee L_{j,k}\big),$$
then $\Phi$ has distribution $F_k(n,m)$ and we say that $\Phi$ is a random $k$-SAT formula with $n$ variables and $m$ clauses. For $x\in\mathbb{B}^n$ we let $\Phi(x)$ denote the point-wise evaluation of $\Phi$ in $x$.

\subsection{Fixing variables and the unit-propagation algorithm}
Let $n,m\in\mathbb{N}_0$ and $k\in\mathbb{N}$ with $n\geq k$ when $m>0$. Let $\mathcal{L}\subseteq\pm[n]$ be consistent. For an $x\in\mathbb{B}^n$ we let the vector $x_\mathcal{L}$ be as defined in subsection \ref{section main result} and for functions $g:\mathbb{B}^n\rightarrow\mathbb{B}$ we define $g_\mathcal{L}(x)=g(x_\mathcal{L})$. Consider a $2$-SAT formula $\varphi$ with $n$ variables and $m$ clauses where its literals are denoted $((\ell_{j,i})_{i\in[2]})_{j\in[m]}$. Consider the formula with fixed variables
$$\varphi_\mathcal{L}=\bigwedge_{j=1}^m\big(\ell_{j,1}\vee\ell_{j,2}\big)_\mathcal{L}=\bigwedge_{j=1}^m\big((\ell_{j,1})_\mathcal{L}\vee(\ell_{j,2})_\mathcal{L}\big).$$
The set $[m]$ is now split into four non-overlapping subsets:
\begin{equation} \label{eq sets A first def}
\begin{aligned}
    \mathcal{C}_0&=\big\{j\in[m]:\ell_{j,1}\in-\mathcal{L}\text{ and }\ell_{j,2}\in-\mathcal{L}\big\},\\
    \mathcal{C}_1&=\big\{j\in[m]:\ell_{j,i_{j_1}}\notin\pm\mathcal{L}\text{ and }\ell_{j,i_{j_2}}\in -\mathcal{L},\;\{i_{j_1},i_{j_2}\}=\{1,2\}\big\},\\
    \mathcal{C}_2&=\big\{j\in[m]:\ell_{j,1}\notin\pm\mathcal{L}\text{ and }\ell_{j,2}\notin\pm\mathcal{L}\big\},\\
    \mathcal{C}_\star&=\big\{j\in[m]:\ell_{j,1}\in\mathcal{L}\text{ or }\ell_{j,2}\in \mathcal{L}\big\}.
    \end{aligned}
\end{equation}
Using the definition of $i_{j_1}$ from above we ease notation and let $\ell_{j,i_{j_1}}=\ell_j$ for $j\in\mathcal{C}_1$. Note that
\begin{itemize}
    \item When $j\in\mathcal{C}_0$ then $(\ell_{j,1}\vee \ell_{j,1})_\mathcal{L} (x)=\texttt{false}$ for all $x\in\mathbb{B}^n$.
    \item When $j\in\mathcal{C}_1$ then $(\ell_{j,1}\vee\ell_{j,2})_\mathcal{L}(x)=\ell_{j}(x)$ for all $x\in\mathbb{B}^n$.
    \item When $j\in\mathcal{C}_2$ then $(\ell_{j,1}\vee\ell_{j,2})_{\mathcal{L}}(x)=(\ell_{j,1}\vee\ell_{j,2})(x)$ for all $x\in\mathbb{B}^n$.
    \item When $j\in\mathcal{C}_\star$ then $(\ell_{j,1}\vee\ell_{j,2})(x)=\texttt{true}$ for all $x\in\mathbb{B}^n$.
\end{itemize}
Define
\begin{equation} \label{eq phi_1 and phi_2 definition}
    \varphi_1=\bigwedge_{j\in\mathcal{C}_1}(\ell_{j}),\qquad \text{and}\qquad \varphi_2=\bigwedge_{j\in\mathcal{C}_2}(\ell_{j,1}\vee\ell_{j,2}).
\end{equation}
Note that the above literals will belong to the set $(\pm[n]\backslash\pm\mathcal{L})$. The above implies that $\varphi\inSAT$ if and only if $\mathcal{C}_0=\emptyset$ and $(\varphi_1\wedge\varphi_2)\inSAT$. We will now further determine when $(\varphi_1\wedge\varphi_2)\inSAT$.  Define
\begin{equation} \label{eq L(phi_1) definition}
    \mathcal{L}(\varphi_1)=\{\ell_j\in\mathcal{C}_1:-\ell_j\notin \mathcal{C}_1\}
\end{equation}
We let this be the set associated with the $1$-SAT formula $\varphi_1$, and we note that it is a consistent set. Moreover, for $x\in\mathbb{B}^n$
\begin{equation} \label{eq varphi_1 SAT}
\varphi_1(x)=\texttt{true}\quad \Longleftrightarrow \quad x_{|\ell_{j}|}=\begin{cases}
    \texttt{true},\quad&\text{when }\sign(\ell_{j})=+1,\\
    \texttt{false},\quad&\text{when }\sign(\ell_{j})=-1.
\end{cases}\quad \forall j\in\mathcal{C}_1.
\end{equation}
This along with the definition of $x_{\mathcal{L}(\varphi_1)}$ implies that when $\varphi_1\inSAT$ then for all $x\in\mathbb{B}^n$ we have that $\varphi_1(x)=\texttt{true}$ if and only if $x=x_{\mathcal{L}(\varphi_1)}$. Therefore
$$(\varphi_1\wedge\varphi_2)\inSAT\;\;\Longleftrightarrow\;\; \varphi_1\inSAT\;\text{ and }\;(\varphi_1\wedge\varphi_2)_{\mathcal{L}(\varphi_1)}\inSAT\;\;\Longleftrightarrow\;\; \varphi_1\inSAT\;\text{ and }\;(\varphi_2)_{\mathcal{L}(\varphi_1)}\inSAT.$$
Thus
\begin{equation} \label{eq varphi inSAT when}
\varphi\inSAT\quad\Longleftrightarrow\quad \mathcal{C}_0=\emptyset,\quad \varphi_1\inSAT,\quad\text{and}\quad (\varphi_2)_{\mathcal{L}(\varphi_1)}\inSAT.
\end{equation}
This decomposition of the event $\{\varphi\inSAT\}$ becomes a key tool in the proof. Moreover, note that the same procedure, as just described, can now be applied to the formula $(\varphi_2)_{\mathcal{L}(\varphi_1)}$. Hence, the procedure of fixing variables continues recursively in rounds and this is the idea behind the unit-propagation algorithm. One of the main ingredients in the proof of our main theorem concerns controlling this process. 

\subsection{Sketch of proof} \label{section proof sketch}
Consider a random $2$-CNF formula $\Phi \sim F_2(n, n)$ with literals $(L_{j,i})_{i \in [2], j \in [n]}$, and a consistent set $\mathcal{L} \subseteq \pm[n]$ with $|\mathcal{L}| = f$. We now apply the unit-propagation procedure to $\Phi$, thereby decomposing the probability of interest into a collection of simpler terms.

\emph{Initial round:} Let $\mathcal{C}_k^{(1)}$, for $k \in K$, be the random sets defined from $\Phi$ and $\mathcal{L}$ as described in (\ref{eq sets A first def}), and define $M_k\rr{1} = |\mathcal{C}_k\rr{1}|$ for $k \in K$. Additionally, let $\Phi_1^{(1)}$ and $\Phi_2^{(1)}$ be the random formulas constructed from $\Phi$ and $\mathcal{L}$, corresponding to the definitions in (\ref{eq phi_1 and phi_2 definition}). Finally, let $\mathcal{L}^{(1)}$ denote the set associated with $\Phi_1^{(1)}$, as defined in \eqref{eq L(phi_1) definition}. From the decomposition in (\ref{eq varphi inSAT when}), we get that
\begin{equation} \label{eq 1 asymp. prob cal}
\begin{aligned}
    \mathbb{P}(\Phi_\mathcal{L}\inSAT)&=\mathbb{P}(M_0\rr{1}=0,\;\Phi_1\rr{1}\inSAT,\;(\Phi_2\rr{1})_{\mathcal{L}\rr{1}}\inSAT).\\
    \end{aligned}
\end{equation}
The independence of the clauses of $\Phi$ implies that the three events in (\ref{eq 1 asymp. prob cal}) only are dependent through the random vector $(M_k\rr{1})_{k\in[K]}$. Moreover, the i.i.d. structure of the clauses in $\Phi$ implies that this vector has a multinomial distribution, where the entries concentrate around their mean and hence become asymptotically independent. This implies that also the events in (\ref{eq 1 asymp. prob cal}) are asymptotically independent, allowing for the desired decomposition:
\begin{equation} \label{eq 1 asymp. prob cal 2}
\begin{aligned}
    \mathbb{P}(\Phi_\mathcal{L}\inSAT)\sim \mathbb{P}(M_0\rr{1}=0\big)\mathbb{P}\big(\Phi_1\rr{1}\inSAT\big)\mathbb{P}\big((\Phi_2\rr{1})_{\mathcal{L}\rr{1}}\inSAT\big).
    \end{aligned}
\end{equation}

\emph{Subsequent rounds:} The procedure from the initial round is now repeated recursively, replacing $\Phi$ and $\mathcal{L}$ with $\Phi^{(1)}_2$ and $\mathcal{L}^{(1)}$, respectively. Hereby, new random elements $(M_k^{(2)})_{k\in K}$, $\Phi_1^{(2)}$, $\Phi_2^{(2)}$, and $\mathcal{L}^{(2)}$ are constructed. The procedure is then repeated iteratively on $\Phi_2^{(2)}$ and $\mathcal{L}^{(2)}$, and so on. Continuing a total of $R$ times ($R$ being some suitable integer), we construct the random elements $(M_k\rr{r})_{k\in K}$, $\Phi_1\rr{r}$, $\Phi_2\rr{r}$, and $\mathcal{L}\rr{r}$ for each $r \in [R]$. Using these constructed elements, the probability calculation in (\ref{eq 1 asymp. prob cal 2}) can be extended iteratively, leading to
\begin{equation} \label{eq decomp of prob}
    \mathbb{P}(\Phi_{\mathcal{L}}\inSAT)\sim \mathbb{P}\big((\Phi_2\rr{R})_{\mathcal{L}\rr{R}}\inSAT\big)\prod_{r=1}^R\mathbb{P}\big(M_0\rr{r}=0\big)\mathbb{P}\big(\Phi_1\rr{r}\inSAT\big).
\end{equation}
The probabilistic decomposition in (\ref{eq decomp of prob}) plays a central role in the overall proof. 
To evaluate the terms of (\ref{eq decomp of prob}), we need to know the distributions of the defined elements. As the elements are defined recursively, the distributions can be found as conditional distributions, and when conditioning on the past, we get that
\begin{align*}
    &M_0^{(r)}|M_1^{(r-1)}\approx  \text{Binomial}\bigg(\bigg(\frac{M_1^{(r-1)}}{2n}\bigg)^2,n\bigg),\quad \text{and}\quad\Phi_1^{(r)}|M_1^{(r)}\approx F_1(n,M_1^{(r)}),\quad (r\in[R]).
\end{align*}
 Thus, it becomes crucial to control the size of the sequence $(M_1^{(r)})_{r\in[R]}$, and the remaining part of the proof concerns this.

Firstly, we establish that $\lim_{n\rightarrow\infty}\mathbb{P}(\Phi_{\mathcal{L}}\inSAT)= 0$ when $f=n^q$ with $q=1/3+\varepsilon$. Here we will prove the existence of constants $c,C>0$, such that
\begin{align} \label{eq M_1^r in [cn^q,Cn^q]}
    \lim_{n\rightarrow\infty}\mathbb{P}\big(M_1^{(r)}\in[cn^q,Cn^q]\;\forall r\in[R]\big)=1,
\end{align}
which will imply that the product in (\ref{eq decomp of prob}) approaches zero and thus, this implies our main result. When proving \eqref{eq M_1^r in [cn^q,Cn^q]} a simple union bound will not do, and thus we will need to exploit the Markov structure of the sequence $(M_1\rr{r})_{r\in[R]}$. 

Next, we will establish that $\liminf_{n\rightarrow \infty}\mathbb{P}(\Phi_\mathcal{L}\inSAT)\geq \liminf_{n\rightarrow\infty}\mathbb{P}(\Phi\inSAT)$,  when $f=n^q$ with $q=1/3-\varepsilon$. In this setup, the sequence $(M_1\rr{r})_{r\in[R]}$ is a super-martingale, and thus optional sampling gives that $M_1\rr{r}\leq\log{n}\cdot n^q$ for all $r\in[R]$. This further implies that the product in \eqref{eq decomp of prob} approaches one as $n\rightarrow\infty$. Next, we will establish that the sequence $(M_1\rr{r})_{r\in[R]}$ is close in distribution to a critical Galton-Watson tree, and from this we can establish that $M_1^{(R)}=0$ w.h.p., which implies that $\mathcal{L}^{(R)}=\emptyset$ w.h.p. This further gives that $(\Phi_2^{(R)})_{\mathcal{L}^{(R)}}$ is close in distribution to $\Phi$, and thus the first term of \eqref{eq decomp of prob} is asymptotically equivalent to $\mathbb{P}(\Phi\inSAT)$. Thus, this finally proves our main theorem.  

\section{Main decomposition of probability}
In this section, we present a mathematically rigorous version of the decomposition in \ref{section proof sketch}. This decomposition will break the proof of our main result into smaller lemmas, which will be proven later.  In subsection \ref{lemma technical}, we introduce the technical lemmas that primarily provide distributional results for the sequences of elements that will be defined in sections \ref{Decomposition in over-constrained regime} and \ref{Decomposition in under-constrained regime}. The two sequences defined in these sections both serve as approximations to the unit propagation procedure. Section \ref{Decomposition in over-constrained regime} addresses the case $q > 1/3$, where the corresponding sequence is used to establish an upper bound on the probability, which approaches zero. In Section \ref{Decomposition in under-constrained regime}, the other sequence provides a lower bound that is used for the proof in the case $q < 1/3$.

\subsection{Technical lemmas} \label{lemma technical}
The first lemma of this section states that for $\Phi\sim F_2(n,m)$ and a consistent set of literals $\mathcal{L}\subseteq\pm[n]$, with $|\mathcal{L}|=f$ we can construct a coupled SAT-formula $\Phi'$ which has the same distribution as $\Phi$ but where fixing the literals of $\mathcal{L}$ in $\Phi$ corresponds to fixing the literals of the set $[n]\backslash[n-f]$. When considering the different rounds of the unit-propagation algorithm later on, the repeated use of this lemma will allow us to control which variables are fixed.
\begin{lemma} \label{lemma technical G}
    There exists a function $G$ such that if $\Phi\sim F_2(n,m)$ and $\mathcal{L}\subseteq \pm[n]$ is a consistent set of literals with $|\mathcal{L}|=f$, then $\Phi':=G(\Phi,\,\mathcal{L})\eqdist \Phi$ and
    $$\big\{\Phi_{\mathcal{L}}\inSAT\big\}=\big\{\Phi_{[n]\backslash [n-l]}'\inSAT\big\}.$$
\end{lemma}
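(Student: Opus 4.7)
The plan is to construct $\Phi' = G(\Phi,\mathcal{L})$ by applying a deterministic, signed relabelling of the variables that carries every literal of $\mathcal{L}$ to some positive literal in $\{n-f+1,\dotsc,n\}$. The hope is that, since the clauses of $\Phi$ are i.i.d.\ uniform on $\mathcal{D}$, any bijection of $\mathcal{D}$ applied clause by clause preserves the joint distribution, giving $\Phi' \eqdist \Phi$; and that the same relabelling pulls the substitution $x_\mathcal{L}$ through to the substitution $y_{[n]\setminus[n-f]}$ in $\Phi'$, yielding the event equality.

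Concretely, I would enumerate $V(\mathcal{L}) := \{|\ell| : \ell \in \mathcal{L}\} = \{v_1 < \dotsb < v_f\}$ and $[n] \setminus V(\mathcal{L}) = \{u_1 < \dotsb < u_{n-f}\}$, and define a permutation $\pi:[n]\to[n]$ by $\pi(v_i) = n-f+i$, $\pi(u_j) = j$, together with signs $\epsilon:[n]\to\{\pm 1\}$ with $\epsilon(v_i) = +1$ if $v_i \in \mathcal{L}$, $\epsilon(v_i) = -1$ if $-v_i \in \mathcal{L}$, and $\epsilon(u_j) = +1$. The signed permutation
$$\sigma(\ell) = \sign(\ell)\,\epsilon(|\ell|)\,\pi(|\ell|), \qquad \ell \in \pm[n],$$
is a bijection on $\pm[n]$ commuting with negation, so it induces a bijection of $\mathcal{D}$ by sending each clause $(\ell_1,\ell_2)$ to the pair $(\sigma(\ell_1),\sigma(\ell_2))$ reordered by increasing absolute value. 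Define $G(\Phi,\mathcal{L})$ to be the formula obtained by applying this bijection clause by clause in the original clause order.

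For $\Phi' \eqdist \Phi$ I would simply note that uniformity and independence of clauses is preserved under a clause-wise bijection of $\mathcal{D}$. For the event equality, the key is to exhibit a compatible bijection $T:\mathbb{B}^n\to\mathbb{B}^n$ on assignments defined by $(Tx)_{\pi(v)} = x_v$ when $\epsilon(v) = +1$ and $(Tx)_{\pi(v)} = \neg x_v$ when $\epsilon(v) = -1$. A short case check on the four sign combinations shows that $\ell(x) = \sigma(\ell)(Tx)$ for every literal $\ell$ and every $x$, hence $\Phi(x) = \Phi'(Tx)$. A separate, equally short computation using the definition of $\epsilon$ gives $T(x_\mathcal{L}) = (Tx)_{[n]\setminus[n-f]}$, and combining these two identities with the bijectivity of $T$ yields the event equality.

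The argument is essentially bookkeeping; there is no real obstacle. The only subtlety is that clauses in $\mathcal{D}$ carry a specific ordering $|\ell_1|<|\ell_2|$, so one must verify that the ``apply $\sigma$ then reorder'' step is indeed a bijection of $\mathcal{D}$, which follows because $\sigma$ is a sign-preserving bijection of $\pm[n]$ and hence induces a bijection of the underlying unordered two-literal sets.
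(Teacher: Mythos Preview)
Your proposal is correct and follows essentially the same approach as the paper: both construct a signed permutation of the literal set (a permutation $\pi$ of $[n]$ together with a sign map $\epsilon$) that carries $\mathcal{L}$ onto $[n]\setminus[n-f]$, apply it clause by clause, and then verify the two claims by exploiting uniformity of the clauses and a coordinate change on assignments. The paper carries out the event-equality check by a direct three-case analysis on whether $\ell_{j,i}\in\mathcal{L}$, $-\ell_{j,i}\in\mathcal{L}$, or $\pm\ell_{j,i}\notin\mathcal{L}$, while you package the same computation as the identities $\ell(x)=\sigma(\ell)(Tx)$ and $T(x_{\mathcal{L}})=(Tx)_{[n]\setminus[n-f]}$; these are equivalent formulations of the same argument. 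One tiny wording slip: in your last paragraph you call $\sigma$ ``sign-preserving'', but you mean (and earlier correctly say) ``commuting with negation''; this does not affect the argument.
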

The below is an easy consequence of the above lemma.
\begin{fact} \label{fact technical G with random L}
    Let $\Phi\sim F_2(n,m)$ and let $\mathcal{L}\subseteq \pm[n]$ be a consistent random set of literals independent of $\Phi$. Then $G(\Phi,\,\mathcal{L})\eqdist\Phi$ and $G(\Phi,\,\mathcal{L})$ is independent of $\mathcal{L}$.
\end{fact}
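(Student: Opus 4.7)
The plan is to reduce to Lemma \ref{lemma technical G}, which already establishes the result for deterministic $\mathcal{L}$, by conditioning on the value of the random set. Since $\mathcal{L}$ takes values in the finite collection of consistent subsets of $\pm[n]$, I can partition on its realizations and verify the joint law of $(G(\Phi,\mathcal{L}),\mathcal{L})$ on each atom.

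First, I would fix an arbitrary measurable set $A$ in the (finite) space of $2$-SAT formulas with $n$ variables and $m$ clauses, together with an arbitrary consistent $L\subseteq\pm[n]$. Using that $\Phi$ and $\mathcal{L}$ are independent, I obtain
$$\mathbb{P}\bigl(G(\Phi,\mathcal{L})\in A,\;\mathcal{L}=L\bigr)=\mathbb{P}\bigl(G(\Phi,L)\in A,\;\mathcal{L}=L\bigr)=\mathbb{P}\bigl(G(\Phi,L)\in A\bigr)\,\mathbb{P}(\mathcal{L}=L).$$
Invoking Lemma \ref{lemma technical G} to replace $\mathbb{P}(G(\Phi,L)\in A)$ with $\mathbb{P}(\Phi\in A)$ then yields the key identity
$$\mathbb{P}\bigl(G(\Phi,\mathcal{L})\in A,\;\mathcal{L}=L\bigr)=\mathbb{P}(\Phi\in A)\,\mathbb{P}(\mathcal{L}=L).$$

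Both halves of the fact follow from this single identity. Summing over all consistent $L\subseteq\pm[n]$ gives $\mathbb{P}(G(\Phi,\mathcal{L})\in A)=\mathbb{P}(\Phi\in A)$, which is the distributional equality $G(\Phi,\mathcal{L})\eqdist\Phi$. Substituting this back produces $\mathbb{P}(G(\Phi,\mathcal{L})\in A,\,\mathcal{L}=L)=\mathbb{P}(G(\Phi,\mathcal{L})\in A)\,\mathbb{P}(\mathcal{L}=L)$ for every $A$ and $L$, which is precisely the independence of $G(\Phi,\mathcal{L})$ and $\mathcal{L}$.

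I expect no genuine obstacle here: this is a textbook ``condition and apply the deterministic lemma'' step. The only point requiring a moment of care is that substituting $L$ for $\mathcal{L}$ inside $G$ is permitted on the event $\{\mathcal{L}=L\}$, and that $G(\Phi,L)$, being a function of $\Phi$ alone, remains independent of the event $\{\mathcal{L}=L\}$—both being immediate consequences of the independence of $\Phi$ and $\mathcal{L}$.
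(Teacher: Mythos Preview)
Your proposal is correct and is precisely the standard conditioning argument the paper has in mind when it says the fact is ``an easy consequence of the above lemma''; there is no alternative route given in the paper, so your argument is essentially the intended one.
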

The proof of Lemma \ref{lemma technical G} relies on the uniformity of the clauses that imply that literals can be swapped without changing the distribution of the formula.

Next, we want to decompose a $2$-CNF formula with fixed variables into its $1$- and $2$-CNF sub-formulas. Let $\varphi$ be a (non-random) $2$-CNF formula with $n$ variables and $m$ clauses and let $\mathcal{L}=[n]\backslash[n-f]$ for some $f\in\mathbb{N}$. Define the sets
\begin{equation*} \label{eq technical A sets}
    \begin{aligned}
        \mathcal{A}_0=\mathcal{A}_0(n,f):=-\mathcal{L}\times -\mathcal{L},\quad& \mathcal{A}_1=\mathcal{A}_1(n,f):=\pm[n-l]\times -\mathcal{L},\\
    \mathcal{A}_2=\mathcal{A}_2(n,f):=\pm[n-f]\times \pm[n-f],\quad& \mathcal{A}_\star=\mathcal{A}_\star(n,f):=\pm[n]\times \mathcal{L}.
    \end{aligned}
\end{equation*}
Let $(\ell_{j,1},\ell_{j,2})$, $j\in[m]$, be the literals of $\varphi$ and define $\mathcal{C}_k=\{j\in[m]:(\ell_{j,1},\ell_{j,2})\in\mathcal{A}_k\}$, $k\in K$. Note that this definition corresponds to the definition in (\ref{eq sets A first def}). A clause that belongs to $\mathcal{A}_0$ is said to be an unsatisfied clause, and a clause in $\mathcal{A}_\star$ is said to be satisfied. Define
$$G_1(\varphi,f):=\bigwedge_{j\in \mathcal{C}_1}\ell_{j,1},\quad G_2(\varphi,f)=\bigwedge_{j\in \mathcal{C}_2}(\ell_{j,1}\vee \ell_{j,2}).$$
In (\ref{eq varphi inSAT when}) we saw that
$$\varphi_\mathcal{L}\inSAT\quad\Longleftrightarrow \quad \mathcal{C}_0=\emptyset,\quad G_1(\varphi,f)\inSAT,\quad G_2(\varphi,f)_{\mathcal{L}(G_1(\varphi,f))}\inSAT,$$
where $\mathcal{L}(G_1(\varphi,f))$ is defined in \eqref{eq L(phi_1) definition}. In the setup with $\mathcal{L}=[n]\backslash[n-f]$ we further note that when $(\ell_{j,1},\ell_{j,2})\in\mathcal{A}_1$, then $\ell_{j,1}\in\pm[n-f]$ and when $(\ell_{j,1},\ell_{j,2})\in\mathcal{A}_2$ then $(\ell_{j,1},\ell_{j,2})\in(\pm[n-f])^2$. Hence both $G_1(\varphi,f)$ and $G_2(\varphi,f)$ can be viewed as boolean functions that map $\{\pm 1\}^{n-f}$ into $\{\pm 1\}$.
The above setup will now be applied to a random $2$-CNF formula. The next lemma describes the simultaneous distribution of the elements defined in this setup.
\begin{lemma} \label{lemma technical dist of M}
   Let $\Phi\sim F_2(n,m)$ and $\mathcal{L}=[n]\backslash[n-f]$. If $M_k$ is the random variable denoting the number of clauses in $\Phi_k:=G_k(\Phi,f)$ for $k\in\{1,2\}$, and $M_0$ and $M_\star$ are the number of unsatisfied- and satisfied clauses, respectively, then
    $$(M_k)_{k\in K}=(M_0,\,M_1,\,M_2,\,M_\star)\sim\text{Multinomial}\big(m,\;{p}(n,f)\big),$$
    where ${p}=(p_k)_{k\in K}$ and 
    \begin{equation*}
        \begin{aligned}
            p_0(n,f)= \frac{f(f-1)}{4n(n-1)},&\quad p_1(n,f)=\frac{(n-f)f}{n(n-1)},\\
            p_2(n,f)=\frac{(n-f)(n-f-1)}{n(n-1)},&\quad p_\star(n,f)=\frac{(n-\frac{f}{4}-\frac{3}{4})f}{n(n-1)}.
        \end{aligned}
    \end{equation*}
    Furthermore
    $$\Phi_k|(M_k)_{k\in K}\sim F_k(n-f,\;M_k),\quad (k\in\{1,2\}),$$
    and $\Phi_1$ and $\Phi_2$ are conditionally independent given $(M_k)_{k\in K}$. 
\end{lemma}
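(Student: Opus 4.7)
The plan is to exploit the i.i.d.\ uniform structure of the clauses of $\Phi$ on $\mathcal{D}$ and reduce the lemma to an elementary counting exercise followed by a standard conditioning argument for multinomial trials.

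First I would attach to each clause $(L_{j,1},L_{j,2})$ a type variable $K_j\in K$ defined by $K_j=k$ iff $(L_{j,1},L_{j,2})\in\mathcal{A}_k\cap\mathcal{D}$. A short check shows that $\{\mathcal{A}_k\cap\mathcal{D}\}_{k\in K}$ partitions $\mathcal{D}$: the ordering constraint $|\ell_1|<|\ell_2|$ combined with the specific choice $\mathcal{L}=[n-f+1,n]$ forces any clause with a literal in $\mathcal{L}$ to fall outside $\mathcal{A}_0\cup\mathcal{A}_1\cup\mathcal{A}_2$, and $\mathcal{A}_\star$ is interpreted as this residual ``satisfied'' class (consistent with the text calling such clauses satisfied). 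Since the clauses are i.i.d.\ uniform on $\mathcal{D}$, the $(K_j)_{j\in[m]}$ are i.i.d.\ on $K$, and therefore $(M_k)_{k\in K}=\big(\sum_{j=1}^m \mathbf{1}_{\{K_j=k\}}\big)_{k\in K}$ is $\text{Multinomial}(m,p(n,f))$ with $p_k(n,f)=|\mathcal{A}_k\cap\mathcal{D}|/|\mathcal{D}|$.

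Next I would compute these cardinalities directly, using $|\mathcal{D}|=2n(n-1)$. The set $\mathcal{A}_0\cap\mathcal{D}$ consists of ordered pairs from $-\mathcal{L}$ obeying $|\ell_1|<|\ell_2|$, giving $\binom{f}{2}$; the set $\mathcal{A}_1\cap\mathcal{D}$ equals $\pm[n-f]\times(-\mathcal{L})$ in full (the ordering is automatic because $[n-f]$ lies strictly below $\mathcal{L}$ in absolute value), contributing $2(n-f)f$; and $\mathcal{A}_2\cap\mathcal{D}$ is precisely the $2$-SAT clause space on $[n-f]$, i.e.\ $4\binom{n-f}{2}$ pairs. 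Dividing by $2n(n-1)$ yields the formulas for $p_0,p_1,p_2$, and $p_\star$ is then read off from $p_0+p_1+p_2+p_\star=1$; a routine algebraic simplification confirms that this equals $(n-f/4-3/4)f/(n(n-1))$.

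For the conditional distributions I would first condition on the full type sequence $(K_j)=(k_j)$. Conditionally, the clauses remain independent, with clause $j$ uniform on $\mathcal{A}_{k_j}\cap\mathcal{D}$. Since $\mathcal{A}_1\cap\mathcal{D}$ is a product set, the marginal of $\ell_{j,1}$ for a type-$1$ clause is uniform on $\pm[n-f]$; hence the literals of $\Phi_1$ are conditionally i.i.d.\ uniform on $\pm[n-f]$, which matches $F_1(n-f,M_1)$. Similarly $\mathcal{A}_2\cap\mathcal{D}$ is exactly the $2$-clause space on $[n-f]$, so $\Phi_2$ is conditionally $F_2(n-f,M_2)$. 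Because $\Phi_1$ and $\Phi_2$ are built from disjoint groups of the independent clauses, they are conditionally independent given $(K_j)_{j\in[m]}$. Passing from $(K_j)$ to $(M_k)$ is routine: given $(M_k)$ the sequence $(K_j)$ is uniform over all arrangements with those counts, but the conditional law of $(\Phi_1,\Phi_2)$ depends only on $(M_1,M_2)$ through the construction above, so the conditional independence and the $F_1,F_2$ marginals persist. The main obstacle is really bookkeeping: keeping the ordering constraint $|\ell_1|<|\ell_2|$ consistent with the special choice $\mathcal{L}=[n-f+1,n]$, and noting that the $\mathcal{A}_\star$ in the statement must be read as the satisfied-clause residual rather than literally $\pm[n]\times\mathcal{L}$. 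No probabilistic tool beyond standard multinomial conditioning is required.
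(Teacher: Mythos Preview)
Your proposal is correct and follows essentially the same route as the paper: both arguments use the i.i.d.\ uniform structure of the clauses to get the multinomial law of $(M_k)_{k\in K}$ by counting $|\mathcal{A}_k\cap\mathcal{D}|/|\mathcal{D}|$, then condition on the clause types (your $(K_j)_j$, the paper's $\mathcal{C}=(\mathcal{C}_k)_{k\in K}$, which carry identical information) to identify the conditional laws of $\Phi_1,\Phi_2$ and their independence, and finally average down to $(M_k)_{k\in K}$. Your side remark that $\mathcal{A}_\star$ must be read as the residual satisfied class is exactly what the paper does implicitly when it sets $p_\star=1-p_0-p_1-p_2$.
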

This lemma is again a direct consequence of the uniformity and the independence of the clauses of a random $2$-CNF formula.

The last lemma of this section gives a lower bound on the probability that a $1$-CNF formula is satisfiable.
\begin{lemma} \label{lemma 1-SAT}
    Let $n,m\in\mathbb{N}$ with $n\geq m$ and let $\Phi\sim F_1(n,m)$. Then
    $$\mathbb{P}(\Phi\inSAT)\geq \big(1-\tfrac{m}{n}\big)^m.$$
\end{lemma}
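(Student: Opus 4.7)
\bigskip

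\noindent\textbf{Plan for proving Lemma \ref{lemma 1-SAT}.}

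The plan is to exploit the simple structural observation that a $1$-CNF formula is satisfiable as soon as no two of its clauses mention the same variable. Concretely, by the definition of $F_1(n,m)$ each clause of $\Phi$ is a single literal $L_j$ uniformly distributed on $\pm[n]$, and the clauses $L_1,\dotsc,L_m$ are i.i.d. The formula $\Phi$ fails to be satisfiable precisely when there exist indices $i\neq j$ with $L_i=-L_j$, so if all variables $|L_1|,\dotsc,|L_m|$ are distinct then no such conflict can occur and $\Phi\inSAT$ automatically.

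First I would introduce the event
\[
A=\big\{|L_1|,\dotsc,|L_m|\text{ are pairwise distinct}\big\}
\]
and record the inclusion $A\subseteq\{\Phi\inSAT\}$, giving $\mathbb{P}(\Phi\inSAT)\geq\mathbb{P}(A)$. Next, since $L_j$ is uniform on $\pm[n]$, marginalising over the sign shows that $|L_j|$ is uniform on $[n]$, and independence of the $L_j$'s transfers to independence of the $|L_j|$'s. A standard birthday-style enumeration then gives
\[
\mathbb{P}(A)=\frac{n(n-1)\cdots(n-m+1)}{n^m}=\prod_{i=0}^{m-1}\Bigl(1-\tfrac{i}{n}\Bigr).
\]

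Finally I would lower-bound each factor by the smallest one: for every $i\in\{0,1,\dotsc,m-1\}$ we have $i<m$, hence $1-i/n\geq 1-m/n$, which yields
\[
\prod_{i=0}^{m-1}\Bigl(1-\tfrac{i}{n}\Bigr)\geq \Bigl(1-\tfrac{m}{n}\Bigr)^{m}.
\]
Combining these three steps produces the stated inequality. There is no serious obstacle: the bound is deliberately loose (it ignores the possibility that several clauses may share a variable yet still agree in sign), and the hypothesis $n\geq m$ only ensures that the right-hand side is non-negative.
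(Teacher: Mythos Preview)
Your proof is correct. The paper does not give its own argument for this lemma but simply refers to Lemma~8 of \cite{Basse_2024}; your approach---bounding $\mathbb{P}(\Phi\inSAT)$ below by the probability that all $m$ variables $|L_1|,\dotsc,|L_m|$ are distinct and then lower-bounding each birthday factor by $1-m/n$---is the standard one and is almost certainly what that reference contains.
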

This lemma can be proven in the same way as they prove Lemma 8 in \cite{Basse_2024}. Thus, we will not repeat the argument here. 

\subsection{Decomposition of probability when many variables are fixed}
\label{Decomposition in over-constrained regime}
Let $\Phi\sim F_2(n,n)$, and $\mathcal{L}$ be a consistent set of literals with $|\mathcal{L}|=f=f(n)$, where $f(n)=\Omega( n^{1/3+\varepsilon
})$ for a small $\varepsilon>0$. We will prove that $\lim_{n\rightarrow\infty}\mathbb{P}(\Phi_{\mathcal{L}}\inSAT)=0$. In this section, the aim is to closely regulate the unit-propagation procedure and hereby establish an upper bound on the probability of interest. Later, it is established that this upper bound approaches zero as $n\rightarrow\infty$. 

\subsubsection*{Controlling the unit-propagation procedure}

The assumption on $f$ implies that $f(n)\geq n^{q}$, where $q=1/3+\varepsilon$ for some small $\varepsilon>0$. Let $\mathcal{L}'\subseteq \mathcal{L}$ with $|\mathcal{L}'|=\lfloor n^q\rfloor$. As $\{\Phi_{\mathcal{L}}\inSAT\}\subseteq \{\Phi_{\mathcal{L}'}\inSAT\}$ it is sufficient to establish that $\lim_{n\rightarrow\infty}\mathbb{P}(\Phi_{\mathcal{L}'}\inSAT)=0$. Thus, we will WLOG assume that $f(n)=\lfloor n^q\rfloor$ for some $1/3<q<1/2$. 

Next, we define a sequence of random elements that resembles a controlled version of the unit-propagation procedure. First, we define the initial elements of the procedure. Let $G$ be the function defined in Lemma \ref{lemma technical G}. Then define
\begin{align*}
    \Psi_2\rr{0}:=G(\Phi,\mathcal{L}),\quad S\rr{-1}=0,\quad\bar{S}\rr{-1}=0,\quad S\rr{0}:=f,\\ M_1\rr{0}=f+1,\quad
    \mathcal{L}\rr{0}:=[n]\backslash[n-f],\quad\quad\mathcal{M}\rr{0}:=\{\emptyset,\Omega\}.
\end{align*}
Note that $S\rr{0}=S\rr{-1}+(M_1\rr{0}-1)^+$,  $\mathcal{L}\rr{0}=[n-S\rr{-1}]\backslash[n-S\rr{0}]$, and Lemma \ref{lemma technical G} states that $\Psi_2\rr{0}$ is constructed such that
\begin{equation} \label{eq over-constrained initial initial}
    \Psi_2\rr{0}\sim F_2(n,n)=F_2\big(n-S\rr{-1},\;(n-\bar{S}\rr{-1})^+\big)\quad\text{and}\quad \big\{\Phi_\mathcal{L}\inSAT\big\}=\big\{(\Psi_2\rr{0})_{\mathcal{L}\rr{0}}\inSAT\big\}.
\end{equation}
Furthermore, $\mathcal{M}\rr{0}$ is the trivial $\sigma$-algebra and thus it provides no information. Now, additional elements are constructed recursively. Let $R:=\lfloor n^{1-2q}\log{n}\rfloor$ denote the number of rounds. Then for each $r\in[R]$ we define the following recursively. 

Let $G_1$ and $G_2$ be the functions from Lemma \ref{lemma technical dist of M} and define $\Phi_k\rr{r}:=G_k(\Psi_2\rr{r-1},|\mathcal{L}\rr{r-1}|)$ for $k\in\{1,2\}$. Also, let $M_k\rr{r}$ denote the number of clauses in $\Phi_k\rr{r}$ for $k\in\{1,2\}$ and let $M_0\rr{r}$ and $M_\star\rr{r}$ denote the number of unsatisfied- and satisfied clauses of $(\Psi_2\rr{r-1})_{\mathcal{L}\rr{r-1}}$, respectively. Define the $\sigma$-algebra $\mathcal{M}\rr{r}:=\sigma\big(\mathcal{M}\rr{r-1}\cup\sigma(M_k\rr{r},\;k\in K)\big)$.
The elements are constructed such that
\begin{equation} \label{eq over-constrained initial 1}
    \begin{aligned}
        \big\{(\Psi_2\rr{r-1})_{\mathcal{L}\rr{r-1}}\inSAT\big\}=\big\{(\Phi_2\rr{r})_{\mathcal{L}(\Phi_1\rr{r})}\inSAT,\;\Phi_1\rr{r}\inSAT,\;\;M_0\rr{r}=0\big\},
    \end{aligned}
\end{equation}
see (\ref{eq varphi inSAT when}), and Lemma \ref{lemma technical dist of M} states that
\begin{align}
        (M_k\rr{r})_{k\in K}|\mathcal{M}\rr{r-1}&\sim\text{Binomial}\Big((n-\bar{S}\rr{r-2})^+,\;{p}\big(n-S\rr{r-2},\;(M_1\rr{r-1}-1)^+\big)\Big), \label{eq over M_k^r dist}\\
        \Phi_k\rr{r}|\mathcal{M}\rr{r}&\sim F_2(n-S\rr{r-1},M_k\rr{r}),\quad k\in\{1,2\}, \label{eq over Phi_1^r dist}
\end{align}
and $\Phi_2\rr{r}$ and $\Phi_1\rr{r}$ are independent when conditioning on $\mathcal{M}\rr{r}$. Now, define $\bar{\Psi}_2\rr{r}:=G(\Phi_2\rr{r},\mathcal{L}(\Phi_1\rr{r}))$, where $G$ is the function from Lemma \ref{lemma technical G} and the set corresponding to $\Phi_1\rr{r}$ is defined in (\ref{eq L(phi_1) definition}). As $\Phi_2\rr{r}$ and $\Phi_1\rr{r}$ are independent given $\mathcal{M}\rr{r}$, Fact \ref{fact technical G with random L} states that
$$\bar{\Psi}_2\rr{r}|\mathcal{M}\rr{r}\sim F_2(n-S\rr{r-1},M_k\rr{r}),\quad\text{and}\quad \bar{\Psi}_2\rr{r}\ind\Phi_1\rr{r}\;|\mathcal{M}\rr{r}.$$
Moreover, if $\bar{M}_1\rr{r}=|\mathcal{L}(\Phi_1\rr{r})|$, then
\begin{equation} \label{eq over-constrained Psi}
    \big\{(\Phi_2\rr{r})_{\mathcal{L}(\Phi_1\rr{r})}\inSAT\big\}=\big\{(\bar{\Psi}_2\rr{r})_{\bar{\mathcal{L}}\rr{r}}\inSAT\big\},\quad\text{where}\quad \bar{\mathcal{L}}\rr{r}=[n-S\rr{r-1}]\backslash[n-S\rr{r-1}-\bar{M}_1\rr{r}].
\end{equation}
Now, we either add clauses to $\bar{\Psi}_2\rr{r}$ or remove clauses. Define $\bar{S}\rr{r-1}=\lfloor\log{n}\rfloor \cdot S\rr{r-1}$ and let $(L_{j,1}\rr{r},L_{j,2}\rr{r})$ for $j\in[M_2\rr{r}]$ be the random literals of $\bar{\Psi}_2\rr{r}$. If $M_2\rr{r}<(n-\bar{S}\rr{r-1})^+$ define additional random literals $(L_{j,1}\rr{r},L_{j,2}\rr{r})$ for $j\in\{M_2\rr{r},\dotsc,(n-\bar{S}\rr{r-1})^+\}$ where conditional on $\mathcal{M}\rr{r}$ they are i.i.d. and uniformly distributed on $\mathcal{D}\rr{r}:=\{(\ell_1,\ell_2)\in(\pm[n-S\rr{r-1}])^2:|\ell_1|<|\ell_2|\}$. Define
$${\Psi}_2\rr{r}=\bigwedge_{j\in[(n-\bar{S}\rr{r-1})^+]}\big(L_{j,1}\rr{r}\vee L_{j,2}\rr{r}\big),\quad\text{then}\quad {\Psi}_2\rr{r}|\mathcal{M}\rr{r}\sim F_2\big(n-S\rr{r-1},(n-\bar{S}\rr{r-1})^+\big).$$
Lastly, let 
$$S\rr{r}=S\rr{r-1}+(M_1\rr{r}-1)^+,\quad\text{and}\quad \mathcal{L}\rr{r}=[n-S\rr{r-1}]\backslash[n-S\rr{r}].$$
Then we are in the same setting again and we can repeat the procedure on $\Psi_2\rr{r}$ and $\mathcal{L}\rr{r}$ under the conditional distribution given $\mathcal{M}\rr{r}$, where we note that $\mathcal{L}\rr{r}$ is deterministic given $\mathcal{M}\rr{r}$.

Note that it is mainly the sequence $\{S\rr{r}\}_{r\in[R]}$ that controls the size of the different elements constructed above and this sequence is defined from the sequence $\{M_1\rr{r}\}_{r\in[R]}$. Thus, a big part of the proof in the over-constrained setting is controlling the size of this sequence, which describes the number of unit-clauses constructed in each round. We show that that this number remains on the order of $n^q$ (remember that $f(n)=\lfloor n^q\rfloor$) throughout the $R$ rounds as the below lemma states.
\begin{lemma} \label{lemma over-constrained control size of M_1^r}
    There exist constants $c_0>0$ and $C_0>0$ such that the two events
    $$B_l=\big\{M_1^{(r)}\geq c_0 n^q,\; r\in[R]\big\}\quad \text{and}\quad {B}_u=\big\{M_1^{(r)}\leq C_0 n^q,\; r\in[R]\big\}$$
    satisfy
    $$\lim_{n\rightarrow\infty}\mathbb{P}\big({B}_l\big)=\lim_{n\rightarrow\infty}\mathbb{P}\big({B}_u\big)=1.$$
\end{lemma}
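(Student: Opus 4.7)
The plan is to introduce the stopping time
$$\tau = \inf\bigl\{r \in [R] : M_1\rr{r} \notin [c_0 n^q, C_0 n^q]\bigr\}$$
(with the convention $\tau := R+1$ if the set is empty) for constants $0 < c_0 < 1 < C_0$ to be fixed, and to prove that $\mathbb{P}(\tau \leq R) \to 0$. Since $M_1\rr{0} = f+1$ lies in $(c_0 n^q, C_0 n^q)$ for large $n$ once $c_0 < 1 < C_0$, the event $\{\tau > R\}$ coincides with $B_l \cap B_u$, so controlling the stopped process yields both $\mathbb{P}(B_l) \to 1$ and $\mathbb{P}(B_u) \to 1$ simultaneously.

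The first step is to control the conditional drift on $\{\tau \geq r\}$. By \eqref{eq over M_k^r dist} and the explicit formula for $p_1$,
$$\mathbb{E}\bigl[M_1\rr{r} \bigm| \mathcal{M}\rr{r-1}\bigr] = (n-\bar{S}\rr{r-2})^+ \cdot \frac{(n - S\rr{r-2} - (M_1\rr{r-1}-1)^+)(M_1\rr{r-1}-1)^+}{(n - S\rr{r-2})(n - S\rr{r-2} - 1)}.$$
On $\{\tau \geq r\}$, $S\rr{r-2} \leq RC_0 n^q = O(n^{1-q}\log n)$ and $\bar{S}\rr{r-2} = O(n^{1-q}(\log n)^2)$, both $o(n)$. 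A first-order expansion then gives $\bigl|\mathbb{E}[M_1\rr{r} - M_1\rr{r-1} \mid \mathcal{M}\rr{r-1}]\bigr| \leq C(\log n)^2$ on this event, so the total absolute drift accumulated up to time $R \wedge \tau$ is $O(R(\log n)^2) = O(n^{1-2q}(\log n)^3) = o(n^q)$, using crucially that $q > 1/3$.

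The second step is a concentration bound for the martingale part. Define $Z\rr{r} = \sum_{s=1}^{r\wedge\tau} \bigl(M_1\rr{s} - \mathbb{E}[M_1\rr{s}\mid \mathcal{M}\rr{s-1}]\bigr)$, a martingale in the filtration $(\mathcal{M}\rr{r})_r$. By the binomial variance bound together with the definition of $\tau$, its predictable quadratic variation satisfies $\langle Z\rangle_R = O(Rn^q) = O(n^{1-q}\log n)$. A standard Bernstein-type tail bound for binomials combined with a union bound over the $R$ rounds shows that every increment of $Z$ is at most $O(n^{q/2}\sqrt{\log n})$ with high probability; applying Freedman's inequality on this event then yields $\mathbb{P}(|Z\rr{R}| \geq \delta n^q) \to 0$ for any fixed $\delta > 0$, since the resulting exponent of order $-\delta^2 n^{2q}/(n^{1-q}\log n + \delta n^{3q/2}\sqrt{\log n})$ tends to $-\infty$ throughout the full range $1/3 < q < 1/2$.

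Combining these two estimates, on a high-probability event we have $M_1\rr{r\wedge\tau} = f + o(n^q)$ for every $r \in [R]$, which forces $\tau > R$ once $c_0$ is small enough and $C_0$ large enough. The main obstacle is that the drift and variance bounds are valid only on $\{\tau \geq r\}$, so the argument must be self-bounding via the stopping time. Equally important is the choice of Freedman's inequality rather than Azuma-Hoeffding: a worst-case jump bound would produce fluctuations of order $\sqrt{n \log n}$, which are too large when $q < 1/2$; it is the sharp $O(n^{1-q}\log n)$ bound on the predictable quadratic variation, together with the typical $O(n^{q/2}\sqrt{\log n})$ jump size, that makes the whole argument go through exactly when $q > 1/3$.
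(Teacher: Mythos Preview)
Your argument is correct and takes a genuinely different route from the paper's. You handle the circularity (the drift/variance bounds require $S\rr{r}$ to be small, which in turn requires the $M_1\rr{s}$ to be bounded) by stopping at $\tau$, then close via a Doob--Meyer decomposition and Freedman's inequality. The paper instead removes the circularity by introducing an auxiliary sequence $(N_1\rr{r},T\rr{r})$ in which $T\rr{r}$ is deterministically capped at $\lceil n^{1-q}\log^2 n\rceil$; it first shows by a simple Markov bound that $S\rr{R}<n^{1-q}\log^2 n$ w.h.p., so that the two sequences agree, and then argues on the auxiliary sequence using only Markov and Chebyshev. The key trick there is to control the \emph{endpoint} $N_1\rr{R}$ (via second-moment bounds) and argue backwards: if the process ever dropped below $c_0 n^q$, the supermartingale property forces $N_1\rr{R}$ to be small with probability bounded away from zero, contradicting the endpoint concentration; the upper bound is handled similarly but needs an extra overshoot estimate for the binomial.

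What each approach buys: the paper's argument is entirely elementary (no martingale tail inequalities, only first and second moments), at the price of the somewhat delicate ``look at time $R$ and condition on the first excursion'' manoeuvre and the separate overshoot lemma for $D_u$. Your approach is more systematic and treats $B_l$ and $B_u$ symmetrically in one stroke, but imports Freedman and requires the auxiliary high-probability jump bound to make the a.s.\ increment hypothesis of Freedman go through. Both rely on exactly the same arithmetic: total drift $O(n^{1-2q}(\log n)^3)=o(n^q)$ and predictable variance $O(n^{1-q}\log n)=o(n^{2q})$, which is precisely where $q>1/3$ enters.

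One small point worth making explicit in your write-up: you need the \emph{maximal} form of Freedman (controlling $\max_{r\le R}|Z\rr{r}|$, not just $|Z\rr{R}|$) so that $M_1\rr{r\wedge\tau}$ stays in $(c_0 n^q,C_0 n^q)$ for every $r$ and hence $\tau>R$; and the combination ``restrict to the event where all jumps are $O(n^{q/2}\sqrt{\log n})$, then apply Freedman'' needs one line of care (e.g.\ truncate the increments and check the recentering is negligible), since Freedman as usually stated requires an almost-sure jump bound.
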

As $S\rr{r}\leq\lfloor n^q\rfloor +\sum_{r=1}^RM_1\rr{r} $ the above Lemma also implies that:
\begin{fact} \label{fact bound on S^r}
    There exists a constant $C_1>0$ such that for $r\in[R]$ (and $n$ large enough) we have 
    $$\big\{S\rr{r}\leq C_1n^{1-q}\log{n}\big\}\subseteq B_u.$$
\end{fact}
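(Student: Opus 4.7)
The plan is to prove this as a purely deterministic implication: on the event $B_u$ the cumulative quantity $S\rr{r}$ inherits a uniform bound of order $n^{1-q}\log n$. No new probabilistic input beyond $B_u$ is needed, and all the work is in unrolling the recursion for $S\rr{r}$ and balancing exponents against the choice of $R$.

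Step 1 (telescope $S\rr{r}$). From the recursion $S\rr{r}=S\rr{r-1}+(M_1\rr{r}-1)^+$ with $S\rr{0}=\lfloor n^q\rfloor$, a straightforward induction will yield, for every $r\in[R]$,
\[
S\rr{r}\;=\;\lfloor n^q\rfloor+\sum_{s=1}^{r}(M_1\rr{s}-1)^+\;\le\;\lfloor n^q\rfloor+\sum_{s=1}^{R}M_1\rr{s},
\]
which is precisely the inequality quoted in the sentence immediately preceding the Fact. Since the right-hand side is non-decreasing in $r$ and independent of $r$, one deterministic bound will handle all rounds simultaneously.

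Step 2 (plug in $B_u$ and $R$). On the event $B_u$ one has $M_1\rr{s}\le C_0 n^q$ for every $s\in[R]$. Combined with $R=\lfloor n^{1-2q}\log n\rfloor$ this will give
\[
S\rr{r}\;\le\;n^q+R\cdot C_0 n^q\;\le\;n^q+C_0\,n^{1-q}\log n.
\]
Because $1/3<q<1/2$ forces $q<1-q$, for $n$ large enough $n^q\le n^{1-q}\le n^{1-q}\log n$, independently of $r$. Hence $S\rr{r}\le(C_0+1)n^{1-q}\log n$ throughout $r\in[R]$, and taking $C_1:=C_0+1$ delivers the stated inclusion.

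There is essentially no obstacle beyond exponent bookkeeping: the content is that the number of rounds $R=\lfloor n^{1-2q}\log n\rfloor$ is tuned precisely so that $R\cdot n^q\asymp n^{1-q}\log n$, which is the scale at which the cumulative sum $S\rr{r}$ can be controlled by the uniform per-round bound furnished by $B_u$. Any larger choice of $R$ would push $S\rr{r}$ past a nontrivial fraction of $n$ and would invalidate the $(n-S\rr{r-1})$ and $(n-\bar S\rr{r-1})^+$ parameters in the conditional multinomial of Lemma~\ref{lemma technical dist of M}, so the exponent $1-2q$ in the definition of $R$ is the only subtle point in the argument.
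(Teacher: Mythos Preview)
Your argument is correct and mirrors the paper's one-sentence justification: telescope $S\rr{r}=\lfloor n^q\rfloor+\sum_{s\le r}(M_1\rr{s}-1)^+$, then on $B_u$ use $M_1\rr{s}\le C_0 n^q$ together with $R\le n^{1-2q}\log n$ and $q<1-q$ to absorb the initial $n^q$ term. One caveat worth flagging: what you actually establish is the inclusion $B_u\subseteq\{S\rr{r}\le C_1 n^{1-q}\log n\}$, which is the \emph{reverse} of the inclusion as printed in the Fact. This is evidently a typo in the paper---the direction you prove is the one consistent with the preceding sentence and the one actually invoked in the proofs of Lemma~\ref{lemma three limits}---but your final line ``delivers the stated inclusion'' should be adjusted accordingly.
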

Lemma \ref{lemma over-constrained control size of M_1^r} is technical to prove. It is easy to find constants $c_0>0$ and $C_0>0$ such that for each $r\in[R]$ we have that $c_0n^q<M_1\rr{r}<C_0n^q$ w.h.p. This does however not imply that the entire sequence $\{M_1\rr{r}\}_{r\in[R]}$ is uniformly bounded w.h.p. A union bound is not tight enough to establish the uniform boundedness so the dependence structure of the sequence needs to be exploited. We establish that when $M_1\rr{R}$ is bounded w.h.p. the previous elements will be bounded w.h.p. as well.

\subsubsection*{Decomposing the probability}
The random elements defined in the controlled unit-propagation procedure above will now be related to the probability that $\Phi_\mathcal{L}$ is satisfiable. Our aim is to show that the probability tends to zero and thus we want to construct an upper bound on the probability. Let $B_u$ and $B_l$ be the events from Lemma \ref{lemma over-constrained control size of M_1^r}.
Using equation (\ref{eq over-constrained initial initial}) we first note that
\begin{equation*}
    \begin{aligned}
        \mathbb{P}\big(\Phi_\mathcal{L}\inSAT,\;B_u,\;B_l\big)=\mathbb{P}\big((\Psi_2\rr{0})_{\mathcal{L}\rr{0}}\inSAT,\;B_u,\;B_l\Big).
    \end{aligned}
\end{equation*}
Next, using (\ref{eq over-constrained initial 1}) and (\ref{eq over-constrained Psi}) on the term at the right gives that
\begin{equation} \label{eq over-constrained first}
    \begin{aligned}
        &\mathbb{P}\Big((\Psi_2\rr{0})_{\mathcal{L}\rr{0}}\inSAT,\;B_u,\;B_l\Big)\\
        = & \mathbb{P}\Big((\Phi_2\rr{1})_{\mathcal{L}(\Phi_1\rr{1})}\inSAT,\;\Phi_1\rr{1}\inSAT,\;M_0\rr{1}=0,\;B_u,\;B_l\Big)\\
        =&\mathbb{P}\Big((\bar{\Psi}_2\rr{1})_{\bar{\mathcal{L}}\rr{1}}\inSAT,\;\Phi_1\rr{1}\inSAT,\;M_0\rr{1}=0,\;B_u,\;B_l\Big)\\
        \leq &\mathbb{P}\big(\big(\bar{\Psi}_2\rr{1}\big)_{\bar{\mathcal{L}}\rr{1}}\inSAT,\;M_0\rr{1}=0,\;B_u,\;B_l,\;M_1\rr{1}\leq\bar{M}_1\rr{1}+1,\;M_2\rr{1}\geq (n-\bar{S}\rr{-1})^+\big)\\
    &+\mathbb{P}\big(M_1\rr{1}>\bar{M}_1\rr{1}+1,\;\Phi_1\rr{1}\inSAT,\;B_u\big)+\mathbb{P}\big(M_2\rr{1}<(n-\bar{S}\rr{-1})^+,\;B_u,\;B_l\big).
    \end{aligned}
\end{equation}
The first term in the last expression above will now be further decomposed. Note that when $M_1\rr{1}\leq \bar{M}_1\rr{1}+1$ then ${\mathcal{L}}\rr{1}\subseteq \bar{\mathcal{L}}\rr{1}$ and when $M_2\rr{1}\geq (n-\bar{S}\rr{-1})^+$ then ${\Psi}_2\rr{1}$ is a sub-formula of $\bar{\Psi}_2\rr{1}$. Thus
\begin{equation} \label{eq over-constrained second}
    \begin{aligned}
&\mathbb{P}\big(\big(\bar{\Psi}_2\rr{1}\big)_{\bar{\mathcal{L}}\rr{1}}\inSAT,\;M_0\rr{1}=0,\;B_u,\;B_l,\;M_1\rr{1}\leq\bar{M}_1\rr{1}+1,\;M_2\rr{1}\geq (n-\bar{S}\rr{-1})^+\big)\\
\leq &\mathbb{P}\big(({\Psi}_2\rr{1})_{\mathcal{L}\rr{1}}\inSAT,M_0\rr{1}=0,\;B_u,\;B_l\big).
    \end{aligned}
\end{equation}
Now, recursively repeating (\ref{eq over-constrained first}) and (\ref{eq over-constrained second}) $R$ times in total we eventually arrive at the decomposition
\begin{equation} \label{eq upper final decomposition}
    \begin{aligned}
    \mathbb{P}\big((\Phi_\mathcal{L}\inSAT,\;B_u,\;B_l\big)\leq &\mathbb{P}\big(M_0^{(r)}=0,\; r\in[R],\;B_u,\;B_l\big)\\
    +&\sum_{r=1}^R\mathbb{P}\big(M_1^{(r)}>\bar{M}_1^{(r)}+1,\;\Phi_1\rr{1}\inSAT,\;B_u\big)\\
    +&\sum_{r=1}^R\mathbb{P}\big(M_2^{(r)}<(n-\bar{S}\rr{r-1})^+,\;B_u,\;B_l\big).
    \end{aligned}
\end{equation}
The below lemma establishes the limits of the above upper bound.
\begin{lemma} \label{lemma three limits}
    It holds that
    \begin{enumerate}[label=(\arabic*)]
        \item $\lim_{n\rightarrow\infty}\mathbb{P}\big(M_0^{(r)}=0,\; r\in[R],\;B_u,\;B_l\big)=0$,
        \item $\lim_{n\rightarrow\infty}\sum_{r=1}^R\mathbb{P}\big(M_1\rr{r}\geq  \bar{M}_1\rr{r}+2,\;\Phi_1\rr{1}\inSAT,\;B_u\big)= 0$,
        \item $\lim_{n\rightarrow\infty}\sum_{r=1}^R\mathbb{P}\big(M_2\rr{r}<(n-\bar{S}\rr{r-1})^+,\;B_u,\;B_l\big)= 0.$
    \end{enumerate}
\end{lemma}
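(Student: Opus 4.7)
All three limits are established by conditioning on the filtration $\{\mathcal{M}\rr{r}\}_{r\geq 0}$ and using the conditional distributions \eqref{eq over M_k^r dist}--\eqref{eq over Phi_1^r dist} in conjunction with Lemma \ref{lemma over-constrained control size of M_1^r} and Fact \ref{fact bound on S^r}. Define the $\mathcal{M}\rr{r}$-measurable event $E_r:=\{c_0 n^q\leq M_1\rr{s}\leq C_0 n^q,\;s\in[r]\}$; then $B_u\cap B_l\subseteq E_r$ for every $r\in[R]$, and on $E_{r-1}$ the recursive definition of $S\rr{\cdot}$ gives $S\rr{r-2}\leq 2C_0 n^{1-q}\log n$ and hence $\bar{S}\rr{r-2}\leq 2C_0 n^{1-q}(\log n)^2=o(n)$ (using $q>1/3$), so $N_{r-1}:=(n-\bar{S}\rr{r-2})^+\geq n/2$ and $n-S\rr{r-2}\geq n/2$ for all large $n$, uniformly in $r\in[R]$.

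For (1), \eqref{eq over M_k^r dist} makes $M_0\rr{r}$ Binomial given $\mathcal{M}\rr{r-1}$ with parameters $N_{r-1}$ and $p_0(n-S\rr{r-2},M_1\rr{r-1}-1)$. On $E_{r-1}$ the above bounds together with $M_1\rr{r-1}\geq c_0 n^q$ yield $p_0\geq c\,n^{2q-2}$, hence $\mathbb{P}(M_0\rr{r}=0\mid\mathcal{M}\rr{r-1})\leq (1-p_0)^{N_{r-1}}\leq \exp(-c\,n^{2q-1}/2)$ on $E_{r-1}$. Iterating this inequality backwards from $r=R$, using $B_u\cap B_l\subseteq E_{r-1}\in\mathcal{M}\rr{r-1}$ at each step, gives
\[
\mathbb{P}\big(M_0\rr{r}=0 \text{ for all } r\in[R],\; B_u,\; B_l\big)\leq \exp(-cRn^{2q-1}/2)=n^{-c/2}\longrightarrow 0,
\]
since $R=\lfloor n^{1-2q}\log n\rfloor$.

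For (2), by \eqref{eq over Phi_1^r dist} the clauses of $\Phi_1\rr{r}$ are i.i.d.\ uniform on $\pm[n-S\rr{r-1}]$ given $\mathcal{M}\rr{r}$. Let $N_\ell$ denote the multiplicity of literal $\ell$ among these clauses and put $X\rr{r}:=\sum_\ell\binom{N_\ell}{2}$. On $\{\Phi_1\rr{r}\inSAT\}$ no negation conflicts occur, so $\bar{M}_1\rr{r}=\#\{\ell:N_\ell\geq 1\}$ and $M_1\rr{r}-\bar{M}_1\rr{r}=\sum_\ell(N_\ell-1)^+\leq X\rr{r}$; thus $\{M_1\rr{r}\geq \bar{M}_1\rr{r}+2,\,\Phi_1\rr{r}\inSAT\}\subseteq\{X\rr{r}\geq 2\}$. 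A routine second-moment computation gives $\mathbb{E}[X\rr{r}(X\rr{r}-1)\mid\mathcal{M}\rr{r}]\leq C\,M_1\rr{r}{}^4/(n-S\rr{r-1})^2$, so Markov's inequality together with the $B_u$-bounds yield $\mathbb{P}(X\rr{r}\geq 2\mid\mathcal{M}\rr{r})\leq C\,n^{4q-2}$ on $E_r$; summing gives $\leq CRn^{4q-2}=Cn^{2q-1}\log n\to 0$ since $q<1/2$.

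For (3), conditional on $\mathcal{M}\rr{r-1}$ the quantity $N_{r-1}-M_2\rr{r}=M_0\rr{r}+M_1\rr{r}+M_\star\rr{r}$ is Binomial with mean $N_{r-1}(1-p_2)$, and a direct computation yields $1-p_2\leq 4(M_1\rr{r-1}-1)^+/(n-S\rr{r-2})\leq C\,n^{q-1}$ on $E_{r-1}$; the mean is therefore $\leq C\,n^q$. On $B_l$, however, $\bar{S}\rr{r-1}-\bar{S}\rr{r-2}=\lfloor\log n\rfloor(M_1\rr{r-1}-1)^+\geq (c_0/4)\,n^q\log n$, which exceeds this mean by a factor of order $\log n$, so a Chernoff bound yields $\mathbb{P}(M_2\rr{r}<(n-\bar{S}\rr{r-1})^+\mid\mathcal{M}\rr{r-1})\leq \exp(-c\,n^q\log n)$ on $E_{r-1}$, and the sum over the $R\leq n\log n$ rounds is super-polynomially small. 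The main obstacle is Part (1): the target event is an \emph{intersection} across $R$ rounds, so a union bound is unusable; the iterated tower argument works only because the lower bound $M_1\rr{r-1}\geq c_0 n^q$ required in round $r$ is encoded in the $\mathcal{M}\rr{r-1}$-measurable event $E_{r-1}$, which is the principal role of Lemma \ref{lemma over-constrained control size of M_1^r} in this analysis.
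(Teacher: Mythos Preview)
Your argument is correct and matches the paper's approach: the same iterated conditioning on $\mathcal{M}\rr{r-1}$ (respectively $\mathcal{M}\rr{r}$) via the $\mathcal{M}\rr{r-1}$-measurable truncation $E_{r-1}\supseteq B_u\cap B_l$, the same binomial and collision estimates, and the same appeal to Lemma~\ref{lemma over-constrained control size of M_1^r} and Fact~\ref{fact bound on S^r}. The only differences are cosmetic: in (2) the paper bounds the collision event by a direct union bound over four-tuples of variable indices rather than your factorial-moment bound on $X\rr{r}$, and in (3) it uses Chebyshev instead of Chernoff, obtaining the cruder summand $O(n^{-q})$ which still sums to $O(n^{1-3q}\log n)\to 0$.
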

When proving the above Lemma the events $B_u$ and $B_l$ make it possible to control the sizes of the different random elements. Further, Lemma \ref{lemma over-constrained control size of M_1^r} makes it possible to evaluate one event at a time by conditioning on previous information and hereby knowing exact distributions. 

The above decomposition and lemmas make it straight forward to prove that $\Phi_\mathcal{L}$ is indeed asymptotically unsatisfiable. 

\begin{proof}[Proof of Definition \ref{def degrees of freedom} (2)]
    Lemma \ref{lemma over-constrained control size of M_1^r} implies that it is sufficient to establish that the right-hand side of (\ref{eq upper final decomposition}) tends to zero as $n\rightarrow\infty$. But this is a direct consequence of Lemma \ref{lemma three limits}. 
\end{proof}

\subsection{Decomposition of probability when few variables are fixed}
\label{Decomposition in under-constrained regime}

We will now also control the unit-propagation procedure when the problem is asymptotically satisfiable, where we instead need a lower bound. Let $\Phi\sim F_2(n,n)$ and $\mathcal{L}\subseteq\pm [n]$ be consistent with $|\mathcal{L}|=f(n)$, where $f(n)\leq n^{1/3-\varepsilon}$ for a small $\varepsilon>0$. In section \ref{Decomposition in over-constrained regime}, we saw that the number of unit-clauses remained of order $n^q$ throughout the $R$ rounds. In this section, we instead want to show that the unit-propagation procedure terminates and thus that the number of unit-clauses reaches zero within the number of rounds we consider. It turns out that the number of unit-clauses generated by this algorithm will be a super-martingale (on a set of probability one) when considering the sequence from round $r=2$ and onward. This is helpful as we will make use of optional sampling. Therefore, we will start by stating another lemma for which the entire sequence of one-clauses is a super-martingale and then we will connect this lemma to our main theorem.
\begin{lemma} \label{lemma under-constrained alternative result}
    Let $0<q<1/3$ and let $M_1\rr{-1}$ and $M_1\rr{0}$ be random variables taking values in $[n]$ satisfying that $\mathbb{E}[M_1\rr{0}]\leq C_0n^q$ for some $C_0>0$ and also that
    $$\lim_{n\rightarrow\infty}\mathbb{P}\big(M_1\rr{-1}\leq n^q\log{n}\big)=\lim_{n\rightarrow\infty}\mathbb{P}\big(M_1\rr{0}\leq n^q\log{n}\big)=1.$$
    Define $\mathcal{L}'=[n-M_1\rr{-1}]\backslash[n-M_1\rr{-1}-M_1\rr{0}]$ and $\mathcal{M}\rr{0}=\sigma(M_1\rr{-1},\,M_1\rr{0})$ and let $\Phi'$ be a random function with 
    $$\Phi'|\mathcal{M}\rr{0}\sim F_2\big(n-M_1\rr{-1},\;n-M_1\rr{-1}-M_1\rr{0}\big).$$
    If $\Phi\sim F_2(n,n)$ then
   $$\liminf_{n\rightarrow\infty}\mathbb{P}\big(\Phi'_{{\mathcal{L}}'}\inSAT\big)\geq \liminf_{n\rightarrow\infty}\mathbb{P}\big(\Phi\inSAT\big).$$
\end{lemma}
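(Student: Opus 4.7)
The plan is to adapt the unit-propagation decomposition of Section \ref{Decomposition in over-constrained regime}, but now extracting a lower bound on $\mathbb{P}(\Phi'_{\mathcal{L}'}\inSAT)$, and to exploit the hypothesis $q<1/3$ to show that the propagation terminates before it can accumulate contradictions. Starting from $\Phi'$ and $\mathcal{L}'$, at each round $r$ I apply Lemma \ref{lemma technical G} to realign the fixed literals to the right-most slots of the current variable set, and then use Lemma \ref{lemma technical dist of M} to decompose the clauses into the four multinomial classes, producing random elements $(M_k^{(r)})_{k\in K}$, $\Phi_1^{(r)}$, $\Phi_2^{(r)}$ and $\mathcal{L}^{(r)}$ exactly as in Section \ref{Decomposition in over-constrained regime}. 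Iterating the equivalence (\ref{eq varphi inSAT when}) across $R$ rounds gives the exact identity
$$\{\Phi'_{\mathcal{L}'}\inSAT\} = \{(\Phi_2^{(R)})_{\mathcal{L}^{(R)}}\inSAT,\; M_0^{(r)}=0\text{ and }\Phi_1^{(r)}\inSAT\text{ for every }r\in[R]\},$$
and intersecting with the event $\{M_1^{(R)}=0\}$, on which $\mathcal{L}^{(R)}=\emptyset$ and $(\Phi_2^{(R)})_{\mathcal{L}^{(R)}}=\Phi_2^{(R)}$, yields the desired lower bound.

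The second step is to control the unit-clause sequence $(M_1^{(r)})_{r\geq 0}$. From (\ref{eq over M_k^r dist}) the conditional mean and variance satisfy
$$\mathbb{E}[M_1^{(r)}\mid\mathcal{M}^{(r-1)}] = (M_1^{(r-1)}-1)^{+}\big(1+O(S^{(r-1)}/n+M_1^{(r-1)}/n)\big),\qquad \Var(M_1^{(r)}\mid\mathcal{M}^{(r-1)})=\Theta(M_1^{(r-1)}),$$
so $(M_1^{(r)}+r)$ is essentially a super-martingale and the process closely mimics a critical Galton--Watson branching chain with near-Poisson offspring of mean $\approx 1$. Doob's maximal inequality for non-negative super-martingales together with $\mathbb{E}[M_1^{(0)}]=O(n^q)$ gives the uniform bound $\max_{r\leq R}M_1^{(r)}\leq n^q\log n$ w.h.p., while Kolmogorov's extinction estimate for critical Galton--Watson trees gives $\mathbb{P}(M_1^{(R)}>0)=O(M_1^{(0)}/R)\to 0$ as long as $R\gg n^q\log n$. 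A second-moment calculation based on $\mathbb{E}[M_1^{(r)2}-M_1^{(r-1)2}\mid\mathcal{M}^{(r-1)}]=O(M_1^{(r-1)})$ yields $\mathbb{E}\big[\sum_{r\leq R}M_1^{(r)2}\big]=O(R^2 n^q)$, which is $o(n)$ whenever $R\ll n^{(1-q)/2}$. Because $q<1/3$ the window $n^q\log n\ll R\ll n^{(1-q)/2}$ is non-empty, and any $R$ from this window works.

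The last step converts these estimates into the claimed liminf inequality. Lemma \ref{lemma 1-SAT} together with the Binomial representation of $M_0^{(r)}$ produces
$$\prod_{r=1}^{R}\mathbb{P}\big(M_0^{(r)}=0\mid\mathcal{M}^{(r-1)}\big)\,\mathbb{P}\big(\Phi_1^{(r)}\inSAT\mid\mathcal{M}^{(r)}\big)\;\geq\;\exp\bigg(-O\bigg(\tfrac{1}{n}\sum_{r=1}^{R}M_1^{(r)2}\bigg)\bigg),$$
which tends to $1$ in probability by the second-moment bound above. On the event $\{M_1^{(R)}=0\}$ one has $S^{(R-1)}=o(n)$ and $n-M_2^{(R)}=o(n)$ w.h.p., so a clause-wise coupling realising $\Phi_2^{(R)}$ as a sub-formula of some $\Phi\sim F_2(n,n)$, together with monotonicity of satisfiability in the clause set, yields $\mathbb{P}(\Phi_2^{(R)}\inSAT)\geq \mathbb{P}(\Phi\inSAT)-o(1)$. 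Multiplying the three (asymptotically independent) factors and taking liminfs gives the claim.

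The main obstacle is the uniform-in-$r$ control of the sequence $(M_1^{(r)})$: because the process is essentially critical, a naive union bound over the $R$ rounds is far too weak, and one must genuinely exploit the super-martingale and near-branching-process structure to obtain simultaneously the uniform upper bound, the extinction conclusion $M_1^{(R)}=0$, and the summability $\sum_r M_1^{(r)2}=o(n)$. The hypothesis $q<1/3$ enters exactly through the requirement that the two windows $R\gg n^q\log n$ (for extinction) and $R\ll n^{(1-q)/2}$ (for the product terms to tend to $1$) overlap; for $q\geq 1/3$ no such $R$ exists, which is consistent with the phase-change at $n^{1/3}$ degrees of freedom.
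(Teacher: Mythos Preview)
Your strategy is the paper's: iterate unit propagation, use the super-martingale structure for a uniform bound on $M_1^{(r)}$, invoke critical Galton--Watson extinction for $M_1^{(R)}=0$, and compare the residual formula with critical $2$-SAT. Two steps are wrong as written.

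First, the ``exact identity'' is incompatible with the distribution you cite. The identity holds only if you never modify the clause count of $\Phi_2^{(r)}$, but then the next round's multinomial has $M_2^{(r)}$ trials and literal-set size $\bar{M}_1^{(r)}=|\mathcal{L}(\Phi_1^{(r)})|$, which is not the clean recursion (\ref{eq over M_k^r dist}) you invoke. The paper (Section~\ref{Decomposition in under-constrained regime}, not Section~\ref{Decomposition in over-constrained regime}) instead \emph{adds} fresh clauses to $\bar{\Psi}_2^{(r)}$ up to $n-S^{(r)}$ and over-fixes $M_1^{(r)}\geq\bar{M}_1^{(r)}$ variables; this converts the identity into the one-sided inclusion (\ref{eq lower decomposition}) but yields the clean recursion (\ref{eq under M_k^r dist}), from which $\E[M_1^{(r)}\mid\mathcal{M}^{(r-1)}]\leq M_1^{(r-1)}$ follows directly (no ``$+r$'' correction needed).

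Second, your final coupling is impossible as stated: $\Phi_2^{(R)}$ lives on $n'=n-S^{(R-1)}$ variables, and i.i.d.\ uniform clauses on $\pm[n']$ cannot be realised as a sub-formula of i.i.d.\ uniform clauses on $\pm[n]$, so there is no ``$\Phi_2^{(R)}$ as a sub-formula of some $\Phi\sim F_2(n,n)$''. The paper's route is that, after the padding above and on $\{M_1^{(R)}=0\}$, the residual $\Psi_2^{(R)}$ is distributed \emph{exactly} as $F_2(n',n')$ given $\mathcal{M}^{(R)}$; since $S^{(R-1)}=o(n)$ w.h.p.\ one has $n'\to\infty$, and Fatou gives $\liminf_n\Prob(\Psi_2^{(R)}\inSAT)\geq\liminf_m\Prob(F_2(m,m)\inSAT)$. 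Your second-moment bookkeeping $\E\big[\sum_r M_1^{(r)2}\big]=O(R^2 n^q)$, forcing $R\ll n^{(1-q)/2}$, is a legitimate variant of the paper's conditioning on $\{\max_r M_1^{(r)}\leq n^q\log n\}$ with $R=\lfloor n^{1-2q}\log^{-3}n\rfloor$; both windows contain some $R\gg n^q$ precisely when $q<1/3$.
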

\subsubsection*{Controlling the unit-propagation procedure}
The notation used when naming the elements of the unit-propagation procedure in subsection \ref{Decomposition in over-constrained regime} is now reused in this section. As there are small differences in the definitions in the two cases it is important to pay attention to which definitions apply to which lemmas.

Let $\Phi'$, $\mathcal{L}'$, $M_1\rr{-2}$ and $M_1\rr{-1}$ be the elements of Lemma \ref{lemma under-constrained alternative result}. Again we start by defining some initial elements of our unit-propagation procedure:
\begin{equation*}
    \begin{aligned}
        \Psi_2\rr{0}:=\Phi',\quad &S\rr{-1}=M_1\rr{-1},\quad S\rr{0}=S\rr{-1}+M_1\rr{0},\quad
        \mathcal{L}\rr{0}:={\mathcal{L}}',\quad \mathcal{M}\rr{0}=\sigma\big(M_1\rr{-2},M_1\rr{-1}\big).
    \end{aligned}
\end{equation*}
Note that $\mathcal{L}\rr{0}=[n-S\rr{-1}]\backslash[n-S\rr{0}]$ and $\Psi_2\rr{0}|\mathcal{M}\rr{0}\sim F_2(n-S\rr{-1},\;n-S\rr{0})$. Now the rest of the elements are generated recursively. Let $R=\lfloor n^{1-2q}\log^{-3}{n}\rfloor$ denote the number of rounds. Then for each $r\in[R]$ we define the following recursively. Let $G_1$ and $G_2$ be the functions defined in Lemma \ref{lemma technical G} and let $\Phi_k\rr{r}=G_k(\Psi_2\rr{r-1},\mathcal{L}\rr{r-1})$ for $k\in\{1,2\}$. Also, let $M_k\rr{r}$ be the number of clauses in $\Phi_k\rr{r}$ for $k\in\{1,2\}$ and let $M_0\rr{r}$ and $M_\star\rr{r}$ denote the number of unsatisfied- and satisfied clauses of $(\Psi_k\rr{r-1})_{\mathcal{L}\rr{r-1}}$, respectively. We further define $\mathcal{M}\rr{r}=\sigma\big(\mathcal{M}\rr{r-1}\cup\sigma(M_k\rr{r},\,k\in K)\big).$ We have that
\begin{equation} \label{eq under-constrained initial 1}
    \begin{aligned}
        \big\{(\Psi_2\rr{r-1})_{\mathcal{L}\rr{r-1}}\inSAT\big\}=\big\{(\Phi_2\rr{r})_{\Phi_1\rr{r}}\inSAT,\;\Phi_1\rr{r}\inSAT,\;M_0\rr{r}=0\big\},
    \end{aligned}
\end{equation}
see (\ref{eq varphi inSAT when}), and Lemma \ref{lemma technical dist of M} states that
\begin{align}
        (M_k\rr{r})_{k\in K}|\mathcal{M}\rr{r-1}&\sim\text{Binomial}\big(n-S\rr{r-1},\;{p}(n-S\rr{r-2},\;M_1\rr{r-1})\big), \label{eq under M_k^r dist}\\
        \Phi_k\rr{r}|\mathcal{M}\rr{r}&\sim F_2(n-S\rr{r-1},M_k\rr{r}),\;k\in\{1,2\},\quad \Phi_1\rr{r}\ind\Phi_2\rr{r}|\mathcal{M}\rr{r}.\label{eq under Phi dist}
\end{align}
Now, define $\bar{\Psi}_2\rr{r}:=G(\Phi_2\rr{r},\Phi_1\rr{r})$, where $G$ is the function from Lemma \ref{lemma technical G} and $\Phi_1\rr{r}$ is seen as set, see \eqref{eq L(phi_1) definition}. As $\Phi_2\rr{r}$ and $\Phi_1\rr{r}$ are independent given $\mathcal{M}\rr{r}$, Fact \ref{fact technical G with random L} states that
\begin{equation} \label{eq under ind of Phi and Psi}
    \bar{\Psi}_2\rr{r}|\mathcal{M}\rr{r}\sim F_2(n-S\rr{r-1},M_k\rr{r}),\quad\text{and}\quad \Psi_2\rr{r}\ind\Phi_1\rr{r}\;|\mathcal{M}\rr{r}. 
\end{equation}
Moreover, if $\bar{M}_1\rr{r}$ denotes the number of distinct variables appearing in $\Phi_1\rr{r}$, then
\begin{equation} \label{eq under-constrained Psi}
    \big\{(\Phi_2\rr{r})_{\Phi_1\rr{r}}\inSAT\big\}=\big\{(\bar{\Psi}_2\rr{r})_{\bar{\mathcal{L}}\rr{r}}\inSAT\big\},\quad\text{where}\quad \bar{\mathcal{L}}\rr{r}=[n-S\rr{r-1}]\backslash[n-S\rr{r-1}-\bar{M}_1\rr{r}].
\end{equation}
Now, we add additional clauses to $\bar{\Psi}_2\rr{r}$ or remove clauses. Let $S\rr{r}=S\rr{r-1}+M_1\rr{r}$. Recall that
$$M_2\rr{r}=M_2\rr{r-1}-\big(M_0\rr{r}+M_1\rr{r}+M_\star\rr{r}\big)=\big(n-S\rr{r-1}\big)-\big(M_0\rr{r}+M_1\rr{r}+M_\star\rr{r}\big)\leq n-S\rr{r}.$$
Let $(L_{j,1}\rr{r},L_{j,2}\rr{r})$, $j\in [M_2\rr{r}]$ be the random random literals of $\bar{\Psi}_2\rr{r}$ and define additional random literals $(L_{j,1}\rr{r},L_{j,2}\rr{r})$ for $j\in\{M_2\rr{r}+1,\dotsc,n-S\rr{r}\}$ that when conditioning on $\mathcal{M}\rr{r}$ are i.i.d. and uniformly distributed on $\mathcal{D}\rr{r}:=\big\{(\ell_1,\ell_2)\in(\pm [n-S\rr{r-1}])^2\,:\,|l_1|<|l_2|\big\}$. Define
$${\Psi}_2\rr{r}=\bigwedge_{j\in[n-S\rr{r}]}\big(L_{j,1}\rr{r}\vee L_{j,2}\rr{r}\big),\quad\text{then}\quad {\Psi}_2\rr{r}|\mathcal{M}\rr{r}\sim F_2\big(n-S\rr{r-1},n-S\rr{r}\big).$$
Lastly, define $\mathcal{L}\rr{r}=[n-S\rr{r-1}]\backslash[n-S\rr{r}]$. Now, the same procedure can be applied to $\Psi_2\rr{r}$ and $\mathcal{L}\rr{r}$ in the conditional distribution given $\mathcal{M}\rr{r}$, where we note that $\mathcal{L}\rr{r}$ is deterministic given $\mathcal{M}\rr{r}$.
\subsubsection*{Decomposing the probability}
We will use the elements defined previously to create a lower bound on the probability of $\Phi'_{\mathcal{L}'}$ being satisfiable. The definitions in the initial round imply that
$$\big\{\Phi'_{\mathcal{L}'}\inSAT\big\}=\big\{(\Psi_2\rr{0})_{\mathcal{L}\rr{0}}\inSAT\big\}.$$
Now, using (\ref{eq under-constrained initial 1}) we get
\begin{equation} \label{eq under first}
    \big\{(\Psi_2\rr{0})_{\mathcal{L}\rr{0}}\inSAT\big\}=\big\{(\Phi_2\rr{1})_{\Phi_1\rr{1}}\inSAT,\;\Phi_1\rr{1}\inSAT,\;M_0\rr{1}=0\big\},
\end{equation}
and equation (\ref{eq under-constrained Psi}) further implies
\begin{equation} \label{eq under second}
    \big\{(\Phi_2\rr{1})_{\mathcal{L}\rr{1}}\inSAT \big\}=\big\{(\bar{\Psi}_2\rr{1})_{\bar{\mathcal{L}}\rr{1}}\inSAT\big\}.
\end{equation}
As $\bar{M}_1\rr{1}\leq M_1\rr{1}$ we have that $\mathcal{L}\rr{1}\subseteq\bar{\mathcal{L}}\rr{1}$ and also $\Psi_2\rr{1}$ is constructed such that $\bar{\Psi}_2\rr{1}$ is its sub-formula. Thus, we get the inclusions
\begin{equation} \label{eq under third}
    \big\{(\bar{\Psi}_2\rr{1})_{\bar{\mathcal{L}}\rr{1}}\inSAT\big\}\supseteq\big\{(\Psi_2\rr{1})_{\mathcal{L}\rr{1}}\inSAT\big\}.
\end{equation}
Combining all of the above set inclusions imply that
\begin{equation*}
    \mathbb{P}\big((\Phi')_{\mathcal{L}'}\inSAT\big)\geq\mathbb{P}\big((\Psi_2\rr{1})_{\mathcal{L}\rr{1}}\inSAT,\;\Phi_1\rr{1}\inSAT,\;M_0\rr{1}=0\big).
\end{equation*}
Now, we are back at considering the event $\{(\Psi_2\rr{1})_{\mathcal{L}\rr{1}}\inSAT\}$ and thus (\ref{eq under first}), (\ref{eq under second}) and (\ref{eq under third}) can be repeated for $r=2,\dotsc,R$. Hereby, we eventually get the lower bound
\begin{align} \label{eq lower decomposition}
\mathbb{P}\big(\Phi'_{\mathcal{L}'}\inSAT\big)\geq\mathbb{P}\big((\Psi_2\rr{R})_{\mathcal{L}\rr{R}}\inSAT,\;\Phi_1\rr{r}\inSAT,\;M_0\rr{1}\inSAT,\;r\in[R]\big).
\end{align}
Our next lemma gives that the above lower-bound tends to one as $n\rightarrow\infty$.
\begin{lemma} \label{lemma five limits}
We have that
\begin{enumerate}[label=(\arabic*)]
    \item $\lim_{n\rightarrow\infty}\mathbb{P}\big(M_1\rr{r}\leq n^q\log{n},\;r\in[R]\big)=1$,
    \item $\lim_{n\rightarrow\infty}\mathbb{P}\big(\Phi_1\rr{r}\inSAT,\;r\in[R]\,\big|\,M_1\rr{r}\leq n^q\log{n},\;r\in\{-1,\dotsc,R\}\big)= 1$,
    \item $\lim_{n\rightarrow\infty}\mathbb{P}\big(M_0\rr{r}=0,\;r\in[R]\,\big|\,M_1\rr{r}\leq n^q\log{n},\;r\in\{-1,\dotsc,R\}\big)=1$,
    \item $\lim_{n\rightarrow\infty}\mathbb{P}\big(M_1\rr{R}=0\big)=1.$
\end{enumerate}
\end{lemma}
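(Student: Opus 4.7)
My plan is to regard $(M_1\rr{r})_{r\geq 0}$ as an approximately critical branching process and to reduce (1)--(3) to martingale/concentration estimates built on the following key computation. From \eqref{eq under M_k^r dist} and $p_1(n',f) = (n'-f)f/(n'(n'-1))$, writing $n' = n - S\rr{r-2}$ and $f = M_1\rr{r-1}$,
\[
\mathbb{E}[M_1\rr{r} \mid \mathcal{M}\rr{r-1}] = \frac{(n'-f)^2 f}{n'(n'-1)} = f\Bigl(1 - \frac{2f}{n'} + O\bigl(\tfrac{1}{n'} + \tfrac{f^2}{n'^2}\bigr)\Bigr),
\]
so as soon as $f\geq 1$ the sequence is a nonnegative supermartingale with conditional variance of order $f$. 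I would carry out (1)--(3) in this order and then tackle (4), which is the main obstacle.

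For (1), Doob's maximal inequality gives
\[
\mathbb{P}\Bigl(\max_{r \in [R]} M_1\rr{r} \geq n^q \log n\Bigr) \leq \frac{\mathbb{E}[M_1\rr{0}]}{n^q \log n} \leq \frac{C_0}{\log n} \to 0,
\]
which combined with the two hypotheses on $M_1\rr{-1}$ and $M_1\rr{0}$ yields (1). For (3), on the conditioning event the drift formula gives $\mathbb{E}[M_0\rr{r} \mid \mathcal{M}\rr{r-1}] = (n - S\rr{r-1}) p_0(n - S\rr{r-2}, M_1\rr{r-1}) = O((M_1\rr{r-1})^2/n) = O(n^{2q-1}\log^2 n)$, so a Markov-plus-union bound over $r \in [R]$ with $R = \lfloor n^{1-2q}\log^{-3}n \rfloor$ gives a probability at most of order $1/\log n$. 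For (2), conditioning on $\mathcal{M}\rr{r}$ and applying Lemma~\ref{lemma 1-SAT} together with \eqref{eq under Phi dist} gives $\mathbb{P}(\Phi_1\rr{r} \inSAT \mid \mathcal{M}\rr{r}) \geq (1 - n^{q-1}\log n)^{n^q \log n} = 1 - O(n^{2q-1}\log^2 n)$ on the conditioning event, and a union bound again yields a $O(1/\log n)$ tail.

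The hard part will be (4), which requires quantitative extinction rather than mere boundedness. My plan is to couple $(M_1\rr{r})$, on the good event from (1), with a \emph{state-independent} Galton--Watson process of offspring distribution essentially $\text{Binomial}(n-S\rr{r-1}, 1/n)$, i.e.\ approximately $\text{Poisson}(1)$: the drift formula shows the true offspring mean is $1 - \Theta(f/n)$, so for $f \leq n^q\log n$ the process is stochastically dominated by a slightly subcritical GW tree, and classical extinction asymptotics of Kolmogorov--Yaglom type give survival probability at time $R$ of order $k/R$ where $k \leq n^q \log n$ is an upper bound on the initial population. In our parameters this is $O(n^{3q-1}\log^4 n)$, which vanishes because $q < 1/3$. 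The delicate technical step will be making the coupling rigorous despite the state-dependent offspring kernel and the slow decrease of the resource $n - S\rr{r-1}$; I would first show on the event of (1) that $S\rr{R} = O(n^{1-q}/\log^2 n) = o(n)$, so that the parameters drift by at most $o(1)$, and then build the stochastic domination round by round. Together with the decomposition \eqref{eq lower decomposition}, part (4) is exactly what is needed to collapse $(\Psi_2\rr{R})_{\mathcal{L}\rr{R}}$ back to the original random formula in distribution and so close the proof of Lemma~\ref{lemma under-constrained alternative result}.
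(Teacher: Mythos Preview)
Your plan is correct and matches the paper's proof closely. Part (1) is identical (the paper phrases it via optional sampling at the hitting time of $\{0\}\cup(n^q\log n,\infty)$, which is the same estimate as your Doob maximal inequality). For (2) and (3) the paper does not use a union bound; instead it first establishes that $\Phi_1\rr{r}$ (respectively $M_0\rr{r}$) is conditionally independent of the later variables $M_1\rr{r+1},\dots,M_1\rr{R}$ given $\mathcal{M}\rr{r}$ (respectively given $\mathcal{M}\rr{r-1}$ and $M_1\rr{r}$), and then peels rounds off iteratively, producing a product lower bound of the form $\bigl(1-O(n^{2q-1}\log^2 n)\bigr)^R$. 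This handles directly the subtlety that the conditioning event in the statement involves \emph{future} rounds; your union-bound route avoids that by using $\mathbb{P}(B)\to 1$, and either works.

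For (4) your plan is right in spirit, but one phrase is backwards and the execution differs from the paper. You say the process is ``stochastically dominated by a slightly subcritical GW tree''; a subcritical tree would sit \emph{below} $M_1$, not above, so this cannot yield an upper bound on survival. The paper sidesteps stochastic domination entirely: it proves a pointwise PMF lower bound
\[
\mathbb{P}\bigl(M_1\rr{r}=x\rr{r},\,r\in[R]\bigr)\ \ge\ E(n)\cdot \mathbb{P}\bigl(X\rr{r}=x\rr{r},\,r\in[R]\bigr)
\]
uniformly over trajectories with $x\rr{r}\le n^q\log n$, where $E(n)\to1$ and $(X\rr{r})$ is the \emph{exactly critical} Galton--Watson chain $X\rr{r}\mid X\rr{r-1}\sim\mathrm{Poisson}(X\rr{r-1})$. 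Summing over extinction trajectories gives $\mathbb{P}(M_1\rr{R}=0)\ge E(n)\,\mathbb{P}(X\rr{R}=0,\ X\rr{r}\le n^q\log n\ \forall r)$; a second supermartingale argument controls the bounded event for $X$, and Kolmogorov's bound $\mathbb{P}(X\rr{R}>0\mid X\rr{0})\le C/R$ combined with Jensen on $(1-C/R)^{X\rr{0}}$ finishes. Your $O(k/R)=O(n^{3q-1}\log^4 n)$ arithmetic is exactly the right endgame; only the comparison mechanism needs to be fixed to a PMF inequality or to domination by a critical (not subcritical) tree.
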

That the sequence $(M_1\rr{r})_{r\in[R]}$ is bounded from above follows using optional sampling where we exploit that the sequence turns out to be a super-martingale. Lemma \ref{lemma five limits} (2) and (3) are then consequences of Lemma \ref{lemma technical dist of M}. Lastly (4) is proven by a Poisson approximation and also using theory of Galton-Watson trees. 
Lemma \ref{lemma under-constrained alternative result} is now an easy consequence of Lemma \ref{lemma five limits}.

\begin{proof}[Proof of Lemma \ref{lemma under-constrained alternative result}]
The definitions of $M_1\rr{-1}$ and $M_1\rr{0}$ along with Lemma \ref{lemma five limits} (1) imply that the event that we condition on in (2) and (3) of Lemma \ref{lemma five limits} happens w.h.p. Therefore, Lemma \ref{lemma five limits} (2) implies that $$\lim_{n\rightarrow\infty}\mathbb{P}(\Phi_1\rr{r}\inSAT,\;r\in[R])=1,$$ and Lemma \ref{lemma five limits} (3) implies that $$\lim_{n\rightarrow\infty}\mathbb{P}(M_0\rr{r}=0,\;r\in[R])=1.$$ 
Also, Lemma \ref{lemma five limits} (4) implies that $\mathcal{L}\rr{R}=\emptyset$ w.h.p. and when this is the case also $\Psi_2\rr{R}|\mathcal{M}\rr{R}\sim F_2(n-S\rr{R-1},n-S\rr{R-1})$. Moreover, that $M_1\rr{r}\leq n^q\log{n}$ for all $r\in[R]$ w.h.p. implies that $S\rr{R-1}\leq n^{1-q}$ w.h.p. These observations along with Fatou's Lemma give
\begin{equation*}
    \begin{aligned}
    &\liminf_{n\rightarrow\infty}\mathbb{P}\big((\Psi_2\rr{R})_{\mathcal{L}\rr{R}}\inSAT\big)\\
    = &\liminf_{n\rightarrow\infty}\mathbb{E}\big[\mathbb{P}\big(\Psi_2\rr{R}\inSAT\big|\mathcal{M}\rr{R}\big)\big|S\rr{R-1}\leq n^{1-q},\;\mathcal{L}\rr{R}=\emptyset\big]\\
    \geq &\mathbb{E}\Big[\liminf_{n\rightarrow\infty}\mathbb{P}\big(\Psi_2\rr{R}\inSAT\big|\mathcal{M}\rr{R}\big)\Big|S\rr{R-1}\leq n^{1-q},\;\mathcal{L}\rr{R}=\emptyset\Big]\\
    =&\liminf_{n\rightarrow\infty}\mathbb{P}\big(\Phi\inSAT\big).
    \end{aligned}
\end{equation*}
Combining these limits with the decomposition in (\ref{eq lower decomposition}) gives the result.
\end{proof}

\section{Proofs}
In this section we provide proofs of the lemmas stated previously. Sections \ref{section proof 1} and \ref{section proof 2} are devoted to the case $q>1/3$ and sections \ref{section proof 3} and \ref{section proof 4} concern the case $q<1/3$. Lastly, in section \ref{section proof 5} the technical lemmas of section \ref{lemma technical} are proven. 

\subsection{Proof of Lemma \ref{lemma over-constrained control size of M_1^r}} \label{section proof 1}

In this section, we again consider the elements defined in the unit-propagation procedure of section \ref{Decomposition in over-constrained regime}. We establish that the two events $B_u=\{M_1\rr{r}\leq C_0n^q\}$ and $B_l=\{M_1\rr{r}\geq c_0n^q\}$ happen w.h.p. A problem we encounter is that we cannot control the size of the sequence $\{S\rr{r}\}_{r\in[R]}$. Thus, we will need to define a new sequence of random elements that approximates our previously defined elements but for which we do not have this problem. Let $T\rr{-1}=0$, $N_1\rr{0}=\lfloor n^q\rfloor$ and define recursively for each $r\in[0,R]$
\begin{align*}
    T^{(r)}=\min\big\{T^{(r-1)}+(N_1\rr{r}-1)^+,\;\lceil n^{1-q}\log^{2}{n}\rceil \big\},\qquad \bar{T}\rr{r}=\lfloor\log{n}\rfloor\cdot T\rr{r},\\
    N_1^{(r+1)}|N_1^{(1)},\dotsc,N_1^{(r)}\sim \text{Binomial}\left(\big(n-\bar{T}\rr{r-2}\big)^+,\;p_1\big(n-S\rr{r-1},\big(M_1\rr{r-1}-1\big)^+\big)\right).
\end{align*}
Now, the sequence $\{T\rr{r}\}_{r\in[R]}$ is upper-bounded by $\lceil n^{1-q}\log^2{n}\rceil$ but at the same time it turns out that it has the same distribution as $\{S\rr{r}\}_{r\in[R]}$ w.h.p. Let $c_0>0$ and $C_0>0$ be two constants (which will be further specified later) and define the events
\begin{align*}
    D_l=\big\{N_1\rr{r}\geq c_0 n^q\;\forall r\in[R]\big\},\qquad &D_u=\big\{N_1\rr{r}\leq C_0 n^q\;\forall r\in[R]\big\}\\
    A_S=\big\{S\rr{R}< n^{1-q}\log^{2}{n}\big\}\qquad &A_T=\big\{T\rr{R}< n^{1-q}\log^{2}{n}\big\}. 
\end{align*}
Equation (\ref{eq over M_k^r dist}) implies that 
$$\big(\big\{M_1\rr{r}\mathds{1}_{A_S}\big\}_{r\in[R]},\;\mathds{1}_{A_S}\big)\eqdist \big(\big\{N_1\rr{r}\mathds{1}_{A_T}\big\}_{r\in[R]},\;\mathds{1}_{A_T}\big).$$
Moreover, on $A_S$ it holds that $\{M_1\rr{r}\mathds{1}_{A_S}\}_{r\in[R]}=\{M_1\rr{r}\}_{r\in[R]}$ and on $A_T$ we have
$\{N_1\rr{r}\mathds{1}_{A_T}\}_{r\in[R]}=\{N_1\rr{r}\}_{r\in[R]}$. Thus, if $B_l$ and $B_u$ are the events of Lemma \ref{lemma over-constrained control size of M_1^r}, then
\begin{align*}
    \mathbb{P}(B_l^c)\leq \mathbb{P}(B_l^c\cap A_S)+\mathbb{P}(A_S^c)= \mathbb{P}(D_l^c\cap A_T)+\mathbb{P}(A_S^c)\leq \mathbb{P}(D_l^c)+\mathbb{P}(A_S^c),
\end{align*}
and similarly $\mathbb{P}(B_u^c)\leq \mathbb{P}(D_u^c)+\mathbb{P}(A_S^c)$. Thus, in order to establish Lemma \ref{lemma over-constrained control size of M_1^r} it is sufficient to establish that $\lim_{n\rightarrow\infty}\mathbb{P}(A_S)=1$ and that $\lim_{n\rightarrow\infty}\mathbb{P}(D_l)=\lim_{n\rightarrow\infty}\mathbb{P}(D_u)=1$. Thus, proving Lemma \ref{lemma over-constrained control size of M_1^r} reduces to proving the below two lemmas
\begin{lemma} \label{lemma P(A_S)->1}
    We have $\lim_{n\rightarrow\infty}\mathbb{P}(A_S)=1$.
\end{lemma}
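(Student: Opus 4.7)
The plan is to prove $\mathbb{P}(A_T)\to 1$ instead, and then invoke the distributional identity $(\mathds{1}_{A_S},\{M_1\rr{r}\mathds{1}_{A_S}\}_{r\in[R]}) \eqdist (\mathds{1}_{A_T},\{N_1\rr{r}\mathds{1}_{A_T}\}_{r\in[R]})$ stated just above; taking expectations of the indicator coordinates gives $\mathbb{P}(A_S)=\mathbb{P}(A_T)$. The advantage of switching to the $T$-sequence is that $T\rr{r-1}\leq \lceil n^{1-q}\log^2 n \rceil$ is enforced by the truncation, so $\bar T\rr{r-1}=\lfloor \log n\rfloor T\rr{r-1}= o(n)$ uniformly in $r$, and the parameters in the conditional binomial law of $N_1\rr{r+1}$ are therefore tractable. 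As a convenient reformulation, let $\tilde T\rr{R}:=\sum_{r=0}^R (N_1\rr{r}-1)^+$ be the untruncated analogue; on $\{\tilde T\rr{R}<n^{1-q}\log^2 n\}$ no truncation has ever been invoked, so $T\rr{R}=\tilde T\rr{R}$ and $A_T$ holds. It therefore suffices to show $\mathbb{P}(\tilde T\rr{R}\geq n^{1-q}\log^2 n)\to 0$.

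The core step is a one-step supermartingale-type estimate for $N_1\rr{r+1}$. Using $p_1(n',f')=(n'-f')f'/(n'(n'-1))$ with $n'=n-T\rr{r-1}$ and $f'=(N_1\rr{r}-1)^+$, I would bound
$$\mathbb{E}\bigl[N_1\rr{r+1}\,\bigm|\,\mathcal{F}_r\bigr]\;=\;(n-\bar T\rr{r-1})^+\,p_1\bigl(n-T\rr{r-1},(N_1\rr{r}-1)^+\bigr)\;\leq\; \Bigl(1+\tfrac{C}{n}\Bigr)\,N_1\rr{r}$$
for some constant $C>0$ once $n$ is large. The factor $(n-\bar T\rr{r-1})^+/(n-T\rr{r-1})\leq 1$ because $\bar T\rr{r-1}\geq T\rr{r-1}$, and the remaining ratio $(n-T\rr{r-1}-f')/(n-T\rr{r-1}-1)\leq 1+C/n$ since $T\rr{r-1}=o(n)$. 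Iterating over $r\leq R=\lfloor n^{1-2q}\log n\rfloor$ yields $(1+C/n)^R\leq \exp(C n^{-2q}\log n)\to 1$, so $\mathbb{E}[N_1\rr{r}]\leq (1+o(1))n^q$ uniformly in $r\in[R]$.

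Summing then gives $\mathbb{E}[\tilde T\rr{R}]\leq (R+1)(1+o(1))n^q=O(n^{1-q}\log n)$, and by Markov's inequality
$$\mathbb{P}\bigl(\tilde T\rr{R}\geq n^{1-q}\log^2 n\bigr)\;\leq\;\frac{O(n^{1-q}\log n)}{n^{1-q}\log^2 n}\;=\;O(1/\log n)\;\longrightarrow\;0,$$
which gives $\mathbb{P}(A_T)\to 1$ and hence the lemma. The main delicacy is verifying the one-step estimate cleanly in the presence of the $(-1)^+$, the ceiling, and the $\bar T$-versus-$T$ asymmetry, but no deep idea is needed; the generous $\log^2 n$ slack built into the definition of $A_S$ (compared to the typical scale $n^{1-q}\log n$ of $\tilde T\rr{R}$) makes Markov sufficient and avoids the need for concentration arguments at this stage.
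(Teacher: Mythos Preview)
Your proof is correct and rests on the same core idea as the paper's: bound each $\mathbb{E}[N_1\rr{r}]$ (resp.\ $\mathbb{E}[M_1\rr{r}]$) by roughly $n^q$ via a one-step supermartingale estimate, sum over $r\le R$, and apply Markov against the $\log^2 n$ slack built into $A_S$. The difference is that the paper works directly with $S\rr{R}$ and $M_1\rr{r}$, with no detour through the truncated $T$-sequence or the distributional identity $\mathbb{P}(A_S)=\mathbb{P}(A_T)$. The key observation you are missing is that the supermartingale property already holds \emph{exactly} for the untruncated sequence: since $\bar S\rr{r-2}=\lfloor\log n\rfloor\,S\rr{r-2}\ge S\rr{r-2}+1$ (for $r\ge 2$, using $S\rr{r-2}\ge f\ge 1$), one has $(n-\bar S\rr{r-2})^+/(n-S\rr{r-2}-1)\le 1$, and the remaining factor $(n-S\rr{r-2}-f')/(n-S\rr{r-2})\le 1$ whenever $f'=(M_1\rr{r-1}-1)^+\ge 0$, giving $\mathbb{E}[M_1\rr{r}\mid\mathcal{M}\rr{r-1}]\le M_1\rr{r-1}$ without any a priori control on $S$. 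The same remark applies to your own one-step bound: your estimate is in fact $\mathbb{E}[N_1\rr{r+1}\mid\mathcal{F}_r]\le N_1\rr{r}$ on the nose (when $f'=0$ the whole expression vanishes, and when $f'\ge 1$ the second ratio is $\le 1$), so the $(1+C/n)$ factor and the appeal to $T\rr{r-1}=o(n)$ are unnecessary. In short, your argument works, but the switch to $A_T$ buys nothing here; the paper's direct route is shorter.
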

\begin{lemma} \label{lemma P(D_l)=P(D_u)->1}
    We have $\lim_{n\rightarrow\infty}\mathbb{P}(D_l)=\lim_{n\rightarrow\infty}\mathbb{P}(D_u)=1$.
\end{lemma}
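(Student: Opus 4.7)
The plan is to treat $(N_1\rr{r})_{r \in [R]}$ as an approximate martingale under the built-in a priori bound $T\rr{r} \leq \lceil n^{1-q} \log^2 n \rceil$, and to deduce both $D_u$ and $D_l$ simultaneously by showing that the typical fluctuations of the chain around $n^q$ are only $O(n^{(1-q)/2} \sqrt{\log n})$, which is $o(n^q)$ precisely when $q > 1/3$. The same concentration bound then pins the trajectory inside any band of the form $[c_0 n^q, C_0 n^q]$ with $c_0 < 1 < C_0$.

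The first step is to expand $\mathbb{E}\bigl[N_1\rr{r+1} \,\bigl|\, \mathcal{F}_r\bigr]$ using the explicit form of $p_1$ from Lemma \ref{lemma technical dist of M} together with $\bar{T}\rr{r-1} \leq n^{1-q} \log^3 n = o(n)$. On the event $\{N_1\rr{r} \leq C_0 n^q\}$ one obtains
\begin{equation*}
\mathbb{E}\bigl[N_1\rr{r+1} \,\bigl|\, \mathcal{F}_r\bigr]
= (N_1\rr{r} - 1)\bigl(1 + O(n^{-q} \log^3 n)\bigr)
= N_1\rr{r} - 1 + O(\log^3 n),
\end{equation*}
so the per-step drift is $O(\log^3 n)$ and the total predictable drift over $R = \lfloor n^{1-2q} \log n \rfloor$ rounds is $O(n^{1-2q} \log^4 n) = o(n^q)$ precisely because $q > 1/3$. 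Since $N_1\rr{r+1}$ is conditionally binomial with mean at most $N_1\rr{r}$, its conditional variance is also bounded by $N_1\rr{r}$.

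The second step is to introduce the stopping time $\tau = \inf\{r \in [R] : N_1\rr{r} \notin [c_0 n^q, C_0 n^q]\} \wedge R$ and apply the Doob decomposition to the stopped chain, writing $N_1\rr{r \wedge \tau} - N_1\rr{0} = M_r + A_r$, where $(M_r)$ is a martingale and $(A_r)$ is predictable with $|A_R| = o(n^q)$. On $\{r \leq \tau\}$ each conditional variance is at most $C_0 n^q$, so the quadratic variation of $M$ is bounded by $R \cdot C_0 n^q = O(n^{1-q} \log n)$. Doob's $L^2$ maximal inequality followed by Markov's inequality then yields
\begin{equation*}
\mathbb{P}\Bigl(\max_{r \leq R} |N_1\rr{r \wedge \tau} - n^q| > \tfrac{n^q}{2}\Bigr)
= O\Bigl(\tfrac{\log n}{n^{3q-1}}\Bigr) \xrightarrow[n \to \infty]{} 0
\end{equation*}
since $q > 1/3$. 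Choosing, for example, $c_0 = 1/2$ and $C_0 = 3/2$ forces $\mathbb{P}(\tau < R) \to 0$, which gives $D_u$ and $D_l$ simultaneously.

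The main obstacle is the interplay between the uniform drift/variance estimates and the stopping-time argument: the clean bounds on $\mathbb{E}[N_1\rr{r+1} \mid \mathcal{F}_r]$ and $\mathrm{Var}(N_1\rr{r+1} \mid \mathcal{F}_r)$ are only valid on the band $\{N_1\rr{r} \leq C_0 n^q\}$, which is precisely why stopping at $\tau$ is essential in the second step. A secondary technicality is to verify that the truncation in the definition of $T\rr{r}$ is inactive on $\{r \leq \tau\}$: on this event $T\rr{r} \leq R \cdot C_0 n^q = O(n^{1-q} \log n)$, which is strictly below the truncation level $\lceil n^{1-q} \log^2 n \rceil$ for $n$ large, so the recursion coincides with the untruncated one and the martingale analysis is unaffected.
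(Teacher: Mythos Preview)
Your approach is correct and genuinely different from the paper's. You run a Doob decomposition on the chain stopped at the first exit from the band $[c_0 n^q, C_0 n^q]$, bound the predictable drift by $O(n^{1-2q}\log^4 n)=o(n^q)$ and the bracket by $O(n^{1-q}\log n)$, and conclude via Doob's $L^2$ maximal inequality that $\max_{r\le R}|N_1\rr{r\wedge\tau}-n^q|=o(n^q)$ with high probability, which prevents any exit and yields $D_l$ and $D_u$ in one stroke. The paper instead reasons \emph{backwards} from the terminal time: it first proves a four-part auxiliary lemma giving multi-step conditional mean and variance bounds, deduces that $N_1\rr{R}$ itself concentrates near $n^q$, and then argues by contradiction --- if the chain ever drops below $c_0 n^q$, the supermartingale property forces $N_1\rr{R}$ to stay low (Markov), while if it ever exceeds $C_0 n^q$, a Chebyshev bound (plus a separate binomial-ratio overshoot estimate to control the first excursion above the band) forces $N_1\rr{R}$ to stay high. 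Your route is shorter and treats both directions symmetrically; the paper's route is more elaborate (especially the $D_u$ half) but produces the multi-step moment estimates of Lemma~\ref{lemma mean and var bounds} as a byproduct. One small cosmetic point: to avoid the boundary ambiguity between ``$|N_1\rr{r\wedge\tau}-n^q|<n^q/2$'' and ``exit from $[c_0 n^q,C_0 n^q]$'' when $c_0=1/2$, $C_0=3/2$, it is cleanest to take the stopping band slightly wider than the target band (e.g.\ stop at exit from $[\tfrac13 n^q,2n^q]$ and conclude containment in $[\tfrac12 n^q,\tfrac32 n^q]$).
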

We start by proving the first of the above two lemmas.
\begin{proof}[Proof of Lemma \ref{lemma P(A_S)->1}.]
    We will establish this using Markov's inequality. Note that
    $$\mathbb{E}[S\rr{R}]=\lfloor n^q\rfloor +\sum_{r=1}^R\mathbb{E}[(M_1\rr{r}-1)^+]\leq \sum_{r=0}^R\mathbb{E}[M_1\rr{r}], 
    $$
    and equation (\ref{eq over M_k^r dist}) and the definition of $p_1$ in Lemma \ref{lemma technical dist of M} implies
    \begin{align*}
        \mathbb{E}[M_1\rr{r}]&=\big(n-\bar{S}\rr{r-2}\big)^+\frac{\big(n-S\rr{r-2}-(M_1\rr{r-1}-1)^+\big)(M_1\rr{r-1}-1)^+}{\big(n-S\rr{r-2}\big)\big(n-S\rr{r-2}-1\big)}\\
        &\leq \mathbb{E}\bigg[M_1\rr{r-1}\cdot \frac{(n-\lfloor \log{n}\rfloor S\rr{r-2})^+}{n-S\rr{r-2}-1}\cdot \frac{n-S\rr{r-2}-(M_1\rr{r-1}-1)^+}{n-S\rr{r-2}}\bigg]\leq\mathbb{E}\big[M_1\rr{r-1}\big].
    \end{align*}
    In the above, we used that $S\rr{r-2}\geq n^q-1$ so $n-g(n)S\rr{r-2}\leq n-S\rr{r-2}-1$. Repeating the above argument we eventually get that 
    $$\mathbb{E}[M_1\rr{r}]\leq \mathbb{E}[M_1\rr{0}]\leq n^q.$$
Thus, Markov's inequality implies that
\begin{align*}
    \mathbb{P}(A_S^c)=\mathbb{P}\big(S\rr{R}\geq n^{1-q}\log^{2}{n}\big)\leq \frac{\mathbb{E}\big[S\rr{R}\big]}{n^{1-q}\log^{2}{n}}\leq \frac{(R+1)n^q}{n^{1-q}\log^{2}{n}}\leq \frac{(n^{1-2q}\log{n}+1)n^q}{n^{1-q}\log^2{n}}\rightarrow 0\;\text{ as }n\rightarrow\infty,
\end{align*}
where we use that for $q<1/2$ we have $1-q>q$. 
\end{proof}
To prove the next Lemma we need the below technical lemma.
\begin{lemma} \label{lemma mean and var bounds}
    Let $r,s\in[0,R]$ with $s<r$. Then
    \begin{enumerate}[label=(\arabic*)]
        \item $\mathbb{E}[N_1\rr{r}|N_1\rr{1},\dotsc,N_1\rr{s}]\leq N_1\rr{s},$
        \item $\mathbb{E}[(N_1\rr{r})^2|N_1\rr{1},\dotsc,N_1\rr{s}]\leq RN_1\rr{s}+(N_1\rr{s})^2$,
        \item Assume $N_1\rr{s}\leq C_0 n^q$ for some $C_0>0$. Then there exists $C_1 >0$ (dependent on $C_0$ but independent of $r$ and $s$) such that  
        $\mathbb{E}[N_1\rr{r}|N_1\rr{1},\dotsc,N_1\rr{s}]\geq N_1\rr{s}-C_1 n^{1-2q}\log^4{n}$
        \item Assume $c_0 n^q\leq N_1\rr{s}\leq C_0 n^q$ for some $c_0,C_0>0$. Then there exists $C_2 >0$ (dependent on $c_0$ and $C_0$ but independent of $r$ and $s$) such that $\mathbb{V}\big[N_1\rr{r}\big|N_1\rr{1},\dotsc,N_1\rr{s}\big]\leq C_2 n^{1-q}\log^4{n}.$
    \end{enumerate}
\end{lemma}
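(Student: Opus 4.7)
I plan to handle the four parts in order, each leaning on the previous. Let $\mathcal{F}^{(r)}:=\sigma(N_1^{(1)},\dots,N_1^{(r)})$; since $T^{(r-1)}$ and $\bar{T}^{(r-1)}$ are deterministic functions of $N_1^{(0)},\dots,N_1^{(r-1)}$, they are $\mathcal{F}^{(r-1)}$-measurable. The binomial recursion for $N_1^{(r+1)}$ combined with the explicit form of $p_1$ in Lemma~\ref{lemma technical dist of M} gives the one-step conditional mean
\[
\mu^{(r+1)}:=\mathbb{E}[N_1^{(r+1)}\mid\mathcal{F}^{(r)}]=(n-\bar{T}^{(r-1)})^+\cdot\frac{(n-T^{(r-1)}-(N_1^{(r)}-1)^+)\,(N_1^{(r)}-1)^+}{(n-T^{(r-1)})(n-T^{(r-1)}-1)},
\]
which will drive every subsequent computation. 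For (1), the inequality $\bar{T}^{(r-1)}\ge T^{(r-1)}+1$ (valid for large $n$) gives $(n-\bar{T}^{(r-1)})^+/(n-T^{(r-1)}-1)\le 1$, while the remaining factor is at most $N_1^{(r)}$; hence $\mu^{(r+1)}\le N_1^{(r)}$, so $\{N_1^{(r)}\}$ is a supermartingale and (1) follows by the tower property.

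For (2) I would induct on $r-s\ge 0$. In the induction step, the variance of a binomial is bounded by its mean, so $\mathrm{Var}(N_1^{(r+1)}\mid\mathcal{F}^{(r)})\le\mu^{(r+1)}\le N_1^{(r)}$ and hence $\mathbb{E}[(N_1^{(r+1)})^2\mid\mathcal{F}^{(r)}]\le N_1^{(r)}+(N_1^{(r)})^2$. Taking $\mathbb{E}[\cdot\mid\mathcal{F}^{(s)}]$, applying (1) to the linear term and the inductive hypothesis to the quadratic one, and using $r+1-s\le R$ delivers the claim.

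The core is (3). A direct rearrangement of $\mu^{(j+1)}$ gives, on $\{N_1^{(j)}\ge 1\}$,
\[
\mu^{(j+1)}-N_1^{(j)}=-1-(N_1^{(j)}-1)\,\varepsilon^{(j)},\qquad \varepsilon^{(j)}:=1-\frac{(n-\bar{T}^{(j-1)})^+(n-T^{(j-1)}-N_1^{(j)}+1)}{(n-T^{(j-1)})(n-T^{(j-1)}-1)},
\]
while the drift vanishes when $N_1^{(j)}=0$. The decisive input is the \emph{pathwise} truncation $T^{(j-1)}\le\lceil n^{1-q}\log^2 n\rceil$ built into the $T$-process; combined with $\bar{T}^{(j-1)}=\lfloor\log n\rfloor T^{(j-1)}$ and $n-T^{(j-1)}\ge n/2$ for large $n$, a short expansion yields $\varepsilon^{(j)}\le C(\log^3 n/n^q+N_1^{(j)}/n)$, so
\[
\mu^{(j+1)}-N_1^{(j)}\ge -1-C\Big(\tfrac{\log^3 n}{n^q}\,N_1^{(j)}+\tfrac{(N_1^{(j)})^2}{n}\Big).
\]
Taking $\mathbb{E}[\cdot\mid\mathcal{F}^{(s)}]$ and invoking (1), (2) together with $N_1^{(s)}\le C_0 n^q$ and $R=\lfloor n^{1-2q}\log n\rfloor$, each correction reduces to a bounded multiple of $\log^3 n$, giving $\mathbb{E}[\mu^{(j+1)}-N_1^{(j)}\mid\mathcal{F}^{(s)}]\ge -C'\log^3 n$. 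Telescoping over at most $R$ rounds produces $\mathbb{E}[N_1^{(r)}\mid\mathcal{F}^{(s)}]-N_1^{(s)}\ge -C_1 n^{1-2q}\log^4 n$.

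Finally, (4) is algebra from (2) and (3): writing $\mathbb{V}(N_1^{(r)}\mid\mathcal{F}^{(s)})=\mathbb{E}[(N_1^{(r)})^2\mid\mathcal{F}^{(s)}]-(\mathbb{E}[N_1^{(r)}\mid\mathcal{F}^{(s)}])^2$, the upper bound (2) and the expansion $(\mathbb{E}[N_1^{(r)}\mid\mathcal{F}^{(s)}])^2\ge (N_1^{(s)})^2-2C_1 N_1^{(s)} n^{1-2q}\log^4 n$ coming from (3) cancel the $(N_1^{(s)})^2$ terms, and the hypotheses $c_0 n^q\le N_1^{(s)}\le C_0 n^q$ together with $R\le n^{1-2q}\log n$ yield $\mathbb{V}(N_1^{(r)}\mid\mathcal{F}^{(s)})\le C_2 n^{1-q}\log^4 n$. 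The main obstacle is the drift estimate in (3): the per-round drift could in principle be large through either $T^{(j-1)}$ or $N_1^{(j)}$, and the conclusion conditions only on $\mathcal{F}^{(s)}$, not on events where intermediate iterates stay bounded. The deterministic truncation of $T^{(\cdot)}$ — the sole reason for introducing that auxiliary process — tames the first source pathwise, while (1) and (2) contain the second in expectation; the assumption $q>1/3$ is precisely what makes both error terms in $\varepsilon^{(j)}$ summable over $R\asymp n^{1-2q}\log n$ rounds without overwhelming the baseline $N_1^{(s)}\asymp n^q$.
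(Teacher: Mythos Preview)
Your proposal is correct and follows essentially the same architecture as the paper: parts (1), (2), and (4) are identical in substance to the paper's arguments. The only noteworthy variation is in part (3): the paper iterates a \emph{multiplicative} one-step lower bound of the form
\[
\mathbb{E}[N_1^{(r)}\mid\mathcal{F}^{(s)}]\ \ge\ \frac{n-n^{1-q}\log^3 n-\log n}{n}\,\mathbb{E}[N_1^{(r-1)}\mid\mathcal{F}^{(s)}]\ -\ \Bigl(1+\tfrac{RN_1^{(s)}+(N_1^{(s)})^2}{n-n^{1-q}\log^2 n-1}\Bigr),
\]
unrolls to a geometric product $(1-\Theta(n^{-q}\log^3 n))^{r-s}N_1^{(s)}$, and then bounds that product from below via $(1-2/x)^{xy}\ge 1-4y$. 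Your route instead isolates the additive one-step drift $\mu^{(j+1)}-N_1^{(j)}$, bounds it pathwise using the built-in truncation of $T^{(\cdot)}$ and in expectation using (1)--(2), and telescopes. Both arguments consume exactly the same inputs (deterministic cap on $T^{(\cdot)}$, the supermartingale bound (1), the second-moment bound (2), and $N_1^{(s)}\le C_0 n^q$); your additive telescoping is arguably more transparent because it avoids the auxiliary product inequality, while the paper's multiplicative form makes the near-criticality $\gamma\approx 1$ more visible. Either way the final loss is $O(R\log^3 n)=O(n^{1-2q}\log^4 n)$, matching the claim.
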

\begin{proof}
    The inequalities will be established one at a time.
    \vspace{0.2 cm}
\newline
    (1) Direct calculations give that
        \begin{align*}
        &\mathbb{E}\big[N_1\rr{r}\big|N_1\rr{1},\dotsc,N_1\rr{s}\big]\\
        =&\mathbb{E}\big[\mathbb{E}\big[N_1\rr{r}\big|N_1\rr{1},\dotsc,N_1\rr{r-1}\big]\big|N_1\rr{1},\dotsc,N_1\rr{s}\big]\\
        =&\mathbb{E}\bigg[(n-\bar{T}\rr{r-2})\frac{\big(n-T\rr{r-2}-(N_1\rr{r-1}-1)^+\big)(N_1\rr{r-1}-1)^+}{\big(n-T\rr{r-2}\big)\big(n-T\rr{r-2}-1\big)}\bigg|N_1\rr{1},\dotsc,N_1\rr{s}\bigg]\\
        \leq &\mathbb{E}\bigg[N_1\rr{r-1}\frac{(n-\lfloor\log{n}\rfloor T\rr{r-2})^+}{n-T\rr{r-2}-1}\cdot \frac{\big(n-T\rr{r-2}-(N_1\rr{r-1}-1)^+\big)}{n-T\rr{r-2}}\bigg|N_1\rr{1},\dotsc,N_1\rr{s}\bigg]\\
        \leq&\mathbb{E}\big[N_1\rr{r-1}\big|N_1\rr{1},\dotsc,N_1\rr{s}\big]\leq\dotsb\leq N_1\rr{s},
    \end{align*}
    where we in the first inequality use that $T\rr{r-2}\geq n^q-1$. 
    \vspace{0.2 cm}
\newline
    (2) For the second moment, we use that when $X\sim \text{Binomial}(n,p)$, then
\begin{equation} \label{eq second moment of binomial}
    \mathbb{E}[X^2]=np(1-p)+n^2p^2\leq \mathbb{E}[X]+\big(\mathbb{E}[X]\big)^2.
\end{equation}
This along with the calculations and result of (1) imply
\begin{align*}
    \mathbb{E}\big[(N_1\rr{r})^2\big|N_1\rr{1},\dotsc,N_1\rr{s}\big]=&\mathbb{E}\big[\mathbb{E}\big[(N_1\rr{r})^2\big|N_1\rr{1},\dotsc,N_1\rr{r-1}\big]\big|N_1\rr{1},\dotsc,N_1\rr{s}\big]\\
    \leq &\mathbb{E}\big[N_1\rr{r-1}\big|N_1\rr{1},\dotsc,N_1\rr{s}\big]+\mathbb{E}\big[(N_1\rr{r-1})^2\big|N_1\rr{1},\dotsc,N_1\rr{s}\big]\\
    \leq &\dotsb\leq (r-s)N_1\rr{s}+(N_1\rr{s})^2\\
    \leq & RN_1\rr{s}+(N_1\rr{s})^2.
\end{align*}
(3) Next, we want to find a lower bound on the mean. Here we use that $T\rr{r-2}\leq n^{1-q}\log^2{n}+1$ and we also make use of (1) and (2).
\begin{equation}
    \begin{aligned}
        \label{eq lower bound mean 0}
    &\mathbb{E}\big[N_1\rr{r}\big|N_1\rr{1},\dotsc,N_1\rr{s}\big]\\
    =&\mathbb{E}\bigg[(n-\bar{T}\rr{r-2})\frac{\big(n-T\rr{r-2}-(M_1\rr{r-1}-1)^+\big)(N_1\rr{r-1}-1)^+}{(n-T\rr{r-2})(n-T\rr{r-2}-1)}\bigg|N_1\rr{1},\dotsc,N_1\rr{s}\bigg]\\
    \geq & \mathbb{E}\bigg[\big(n-n^{1-q}\log^3{n}-\log{n}\big) \frac{n-\big(T\rr{r-2}+N_1\rr{r-1}\big)}{n-T\rr{r-2}}\cdot \frac{N_1\rr{r-1}-1}{n}\bigg|N_1\rr{1},\dotsc,N_1\rr{s}\bigg]\\
    \geq &\frac{n-n^{1-q}\log^3{n}-\log{n}}{n}\cdot \mathbb{E}\bigg[(N_1\rr{r-1}-1)\bigg(1-\frac{N_1\rr{r-1}}{n-n^{1-q}\log^2{n}-1}\bigg)\bigg|N_1\rr{1},\dotsc,N_1\rr{s}\bigg]\\
    \geq &\frac{n-n^{1-q}\log^3{n}-\log{n}}{n}\bigg(\mathbb{E}\big[N_1\rr{r-1}\big|N_1\rr{1},\dotsc,N_1\rr{s}\big]-1-\frac{\mathbb{E}\big[(N_1\rr{r-1})^2\big|N_1\rr{1},\dotsc,N_1\rr{s}\big]}{n-n^{1-q}\log^2{n}-1}\bigg)\\
    \geq &\frac{n-n^{1-q}\log^3{n}-\log{n}}{n}\cdot\mathbb{E}\big[N_1\rr{r-1}\big|N_1\rr{1},\dotsc,N_1\rr{s}\big]-1-\frac{RN_1\rr{s}+(N_1\rr{s})^2}{n-n^{1-q}\log^2{n}-1}\\
    \geq &\dotsb\geq \left(\frac{n-n^{1-q}\log^3{n}-\log{n}}{n}\right)^{r-s}N_1\rr{s}-(r-s)\bigg(1+\frac{RN_1\rr{s}+(N_1\rr{s})^2}{n-n^{1-q}\log^2{n}-1}\bigg).
    \end{aligned}
\end{equation}
We will now bound the above two terms one at a time. For the first term, we will need the below inequality which is true for $x\geq 2$ and $y>0$:
\begin{align*}
    \Big[\big(1-\tfrac{2}{x}\big)^x\Big]^y&=\Big[\exp\big(x\log\big(1-\tfrac{2}{x}\big)\big)\Big]^y\geq \Big[\exp\big(x\big(-\tfrac{4}{x}\big)\big)\Big]^y=\exp\big(-4y\big)\geq 1-4y.
\end{align*}
This and that $r-s\leq R\leq n^{1-2q}\log{n}$ now implies
\begin{equation} \label{eq lower bound mean 1}
    \left(\frac{n-n^{1-q}\log^3{n}-\log{n}}{n}\right)^{r-s}\geq \left(\left(1-\frac{2}{n^{q}\log^{-3}{n}}\right)^{n^{q}\log^{-3}{n}}\right)^{n^{1-3q}\log^{4}{n}}\geq 1-4n^{1-3q}\log^4{n}.
\end{equation}
For the other term, we use the assumption that $N_1\rr{s}\leq C_0n^q$. Then as $2q<1$ we get 
$$\frac{RN_1\rr{s}+(N_1\rr{s})^2}{n-n^{1-q}\log^2{n}-1}\leq\frac{C_0 n^{1-q}\log{n}+C_0^2n^{2q}}{n-n^{1-q}\log^2{n}-1} \rightarrow 0\quad\text{as }n\rightarrow\infty.$$
Thus, for a $C>1$ we have that
\begin{equation} \label{eq lower bound mean 2}
    (r-s)\left(1+\frac{RN_1\rr{s}+(N_1\rr{s})^2}{n-n^{1-q}\log^3{n}-1}\right)\leq (r-s)C \leq  C n^{1-2q}\log{n}.
\end{equation}
Now, combining (\ref{eq lower bound mean 0}), (\ref{eq lower bound mean 1}) and (\ref{eq lower bound mean 2}) along with the assumption that $N_1\rr{s}\leq C_0 n^q$ we get that 
\begin{align*}
    \mathbb{E}\big[N_1\rr{r}\big|N_1\rr{1},\dotsc,N_1\rr{s}\big]\geq& \big(1-4n^{1-3q}\log^4{n}\big)N_1\rr{s}-C n^{1-2q}\log{n}\\
    \geq & N_1\rr{s}-2n^{1-3q}\log^4{n}C_0 n^q-C n^{1-2q}\log{n}\geq N_1\rr{s}-C_1 n^{1-2q}\log^4{n}.
\end{align*}
(4)  Lastly, we combine (2) and (3) along with the extra assumption that $N_1\rr{s}\geq c_0 n^q$ (which implies that $N_1\rr{s}- C_1 n^{1-2q}\log^4{n}\geq 0$) to conclude that
\begin{align*}
    \mathbb{V}(N_1\rr{r}|N_1\rr{1},\dotsc,N_1\rr{s})&=\mathbb{E}\big[(N_1\rr{r})^2\big|N_1\rr{1},\dotsc,N_1\rr{s}\big]-\big(\mathbb{E}\big[N_1\rr{r}\big|N_1\rr{1},\dotsc,N_1\rr{s}\big]\big)^2\\
    &\leq RN_1\rr{s}+(N_1\rr{s})^2-\big(N_1\rr{s}-C_1 n^{1-2q}\log^4{n}\big)^2\\
    &\leq RN_1\rr{s}+(N_1\rr{s})^2-(N_1\rr{s})^2+2C_0n^qC_1 n^{1-2q}\log^4{n}\\
    &\leq C_2 n^{1-q}\log^{4}{n}.
    \end{align*}
\end{proof}
\begin{fact} \label{fact bound on mean and variance}
    As $N_1^{(0)}=\lfloor n^q\rfloor$ the above Lemma implies the existence of constants $c_1>0$ and $C_1 >0$ such that
    \begin{equation*}
        \begin{aligned}
            \mathbb{E}[N_1\rr{R}]\leq n^q+1,\qquad
        \mathbb{E}[N_1\rr{R}]\geq c_1 n^q,\qquad
        \mathbb{V}[N_1\rr{R}]\leq C_1 n^{1-q}\log^4{n}.
        \end{aligned}
    \end{equation*}
\end{fact}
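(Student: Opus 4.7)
The plan is to apply Lemma~\ref{lemma mean and var bounds} at the endpoints $s=0$ and $r=R$, exploiting the fact that $N_1^{(0)}=\lfloor n^q\rfloor$ is a deterministic initialisation. Since the conditioning sequence $N_1^{(1)},\dotsc,N_1^{(s)}$ is empty for $s=0$, each of the conditional bounds in that lemma descends automatically to an unconditional bound on $\mathbb{E}[N_1^{(R)}]$ and $\mathbb{V}[N_1^{(R)}]$. Thus the three inequalities in the Fact will just be the specialisations of parts (1), (3), and (4) of Lemma~\ref{lemma mean and var bounds} to this deterministic start.

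For the upper bound on the mean, part (1) with $s=0$ gives directly
\[
\mathbb{E}[N_1^{(R)}]\;\le\;N_1^{(0)}\;=\;\lfloor n^q\rfloor\;\le\;n^q+1.
\]
For the lower bound, the identity $N_1^{(0)}=\lfloor n^q\rfloor\le n^q$ verifies the hypothesis of part (3) with $C_0=1$, and applying that part yields
\[
\mathbb{E}[N_1^{(R)}]\;\ge\;\lfloor n^q\rfloor - C_1\,n^{1-2q}\log^4{n}.
\]
Since $q>1/3$ we have $1-2q<q$, and therefore $n^{1-2q}\log^4{n}=o(n^q)$. Consequently, for any fixed $c_1\in(0,1)$ the right-hand side exceeds $c_1 n^q$ once $n$ is large enough, proving the second inequality.

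For the variance bound, observe that $\lfloor n^q\rfloor$ lies in $[\tfrac{1}{2}n^q,\,n^q]$ for all sufficiently large $n$, so the two-sided hypothesis of part (4) is met with $c_0=1/2$ and $C_0=1$. That part then immediately supplies
\[
\mathbb{V}[N_1^{(R)}]\;\le\;C_2\,n^{1-q}\log^{4}{n},
\]
with a constant depending only on the chosen $c_0,C_0$; relabeling $C_2$ as $C_1$ gives the stated form. There is essentially no obstacle beyond the exponent arithmetic, and it is precisely the assumption $q>1/3$ that makes the additive error $n^{1-2q}\log^4{n}$ in part (3) strictly smaller than the main term $n^q$, so that the super-martingale upper bound of part (1) is matched from below up to a genuinely lower-order correction.
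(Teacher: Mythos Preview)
Your proposal is correct and takes essentially the same approach as the paper: the Fact is stated as an immediate consequence of Lemma~\ref{lemma mean and var bounds} specialised to $s=0$, $r=R$, using that $N_1^{(0)}=\lfloor n^q\rfloor$ is deterministic. The paper gives no further argument, and your deduction of each of the three inequalities from parts (1), (3), and (4) of that lemma, together with the exponent comparison $1-2q<q$ when $q>1/3$, is exactly what is intended.
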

We are now ready to prove the last lemma of this section which will imply Lemma \ref{lemma over-constrained control size of M_1^r}.
\begin{proof}[Proof of Lemma \ref{lemma P(D_l)=P(D_u)->1}.] 
Let $c_1>0$ and $C_1>0$ be the constants of Fact \ref{fact bound on mean and variance}. We let $c_0=c_1/2$ and $C_0=2C_1$ be the constants of our lemma. Note that when $q>1/3$ then $(1-q)/2<q$ why we can choose a $q_1\in\big(\tfrac{1-q}{2},\,q\big)$. Then using Chebyshev's inequality and Fact \ref{fact bound on mean and variance} we get
\begin{align*}
    \mathbb{P}\big(\big|N_1\rr{R}-\mathbb{E}[N_1\rr{R}]\big|\geq n^{q_1}\big)\leq \frac{\mathbb{V}\big[N_1\rr{R}\big]}{n^{2{q_1}}}\leq \frac{C_1 n^{1-q}\log^4{n}}{n^{2q_1}}\rightarrow 0\quad\text{as }n\rightarrow\infty.
\end{align*}
Fact \ref{fact bound on mean and variance} then implies
\begin{align}
    \mathbb{P}\big(N_1\rr{R}\leq c_1 n^q-n^{q_1}\big)\rightarrow 0\quad\text{as }n\rightarrow\infty, \label{eq prob lower bound}\\
    \mathbb{P}\big(N_1\rr{R}\geq C_1 n^q+n^{q_1}\big)\rightarrow 0\quad\text{as }n\rightarrow\infty \label{eq prob upper bound}.
\end{align}
The above implies that the sequence is still of order $n^q$ at step $R$. We will use this to show that the sequence cannot have been too small or too large in previous steps. Remember that we want to establish that $\lim_{n\rightarrow\infty}\mathbb{P}(D_l)=1$, where the complimentary event is given by
$$D_l^c=\big\{\exists r\in[R]\;\text{ s.t. }N_1\rr{r}< c_0n^q\big\}=\big\{\exists r\in[R]\;\text{ s.t. }N_1\rr{r}< \tfrac{1}{2}c_1 n^q\big\}.$$
Using (\ref{eq prob lower bound}) we see that the above is implied if we show that 
\begin{equation} \label{EQUATION}
    \liminf_{n\rightarrow\infty}\mathbb{P}\big(N_1\rr{R}\leq c_1 n^q-n^{q_1}\big|D_l^c\big)>0.
\end{equation}
Define
$$D_l\rr{r}=\big\{N_1\rr{1}>\tfrac{1}{2}c_1 n^q,\;N_1\rr{2}>\tfrac{1}{2}c_1 n^q,\dotsc,N_1\rr{r-1}>\tfrac{1}{2}c_1 n^q,\;N_1\rr{r}\leq \tfrac{1}{2}c_1 n^q\big\},\quad (r\in[R]).$$
Then the above events are disjoint and $D_l^c=\cup_{r\in[R]}D_l\rr{r}$. Using Markov's inequality we get
\begin{equation} \label{eq Markov cond on A_l^r}
    \mathbb{P}(N_1\rr{R}\leq c_1 n^q-n^{q_1}|D_l\rr{r})=1-\mathbb{P}(N_1\rr{R}>c_1 n^q-n^{q_1}|D_l\rr{r})\geq 1-\frac{\mathbb{E}[N_1\rr{R}|D_l\rr{r}]}{c_1 n^q-n^{q_1}},\quad (r\in[R]).
\end{equation}
Next, using Lemma \ref{lemma mean and var bounds} (1) we see
\begin{align*}
    \mathbb{E}[N_1\rr{R}|D_l\rr{r}]=\mathbb{E}\big[\mathbb{E}[N_1\rr{R}|N_1\rr{1},\dotsc,N_1\rr{r}]\big|D_l\rr{r}\big]\leq \mathbb{E}[N_1\rr{r}|D_l\rr{r}]\leq \tfrac{1}{2}c_1 n^q,\quad (r\in[R]).
\end{align*}
This upper bound is then inserted in (\ref{eq Markov cond on A_l^r}):
\begin{align*}
    \mathbb{P}(N_1\rr{R}\leq c_1 n^q-n^{q_1}|D_l\rr{r})\geq 1-\frac{\tfrac{1}{2}c_1 n^q}{c_1 n^q-n^{q_1}}\geq \frac{1}{4},\quad (r\in[R]).
\end{align*}
This finally implies that
\begin{align*}
    \mathbb{P}\big(N_1\rr{R}\leq c_1 n^q-n^{q_1}\big|D_l^c\big)&=\frac{\mathbb{P}\big(\big\{N_1\rr{R}\leq c_1 n^q-n^{q_1}\big\}\cap D_l^c\big)}{\mathbb{P}(D_l^c)}\\
    &=\frac{\sum_{r=1}^R\mathbb{P}\big(N_1\rr{R}\leq c_1 n^q-n^{q_1}\big|D_l\rr{r}\big)\mathbb{P}(D_l\rr{r})}{\mathbb{P}(D_l^c)}\\
    &\geq \frac{1}{4}\frac{\sum_{r=1}^R\mathbb{P}(D_l\rr{r})}{\mathbb{P}(D_l^c)}=\frac{1}{4},
\end{align*}
which is (\ref{EQUATION}). 

Next, we will establish that $\lim_{n\rightarrow\infty}\mathbb{P}(D_u)=1$, where the complimentary event is given by
$$D_u^c=\big\{\exists r\in[R]\;\text{ s.t. }N_1\rr{r}>C_0n^q\big\}=\big\{\exists r\in[R]\;\text{ s.t. }N_1\rr{r}>2C_1n^q\big\}.$$
Using (\ref{eq prob upper bound}) we get that this is implied if we can show that
\begin{align} \label{eq liminf A_u}
    \liminf_{n\rightarrow\infty}\mathbb{P}\big(N_1\rr{R}\geq C_1 n^q+n^{q_1}\big|D_u^c\big)>0.
\end{align}
Define
\begin{align*}
    D_u\rr{r}=\big\{N_1\rr{1}< \lfloor 2C_1n^q\rfloor,\,N_1\rr{2}< \lfloor 2C_1n^q\rfloor,\dotsc,N_1\rr{r-1}< \lfloor 2C_1n^q\rfloor,\;N_1\rr{r}\geq \lfloor 2C_1n^q\rfloor \big\},\quad(r\in[R]).
\end{align*}
Note that the above events are disjoint and $D_u^c\subseteq \cup_{r\in[R]}D_u\rr{r}$. Let $r\in[R]$ be fixed. The event $D_u\rr{r}$ does not give us an upper bound on $N_1\rr{r}$ which implies that we do not have good bounds on $\mathbb{E}[N_1\rr{R}|N_1\rr{1},\dotsc, N_1\rr{r}]$. Therefore, we split the below probability into two terms. Write
\begin{equation} \label{eq A_u^r}
    \begin{aligned}
        &\mathbb{P}\big(N_1\rr{R}<  C_1 n^q+n^{q_1}\big|D_u\rr{r}\big)\\
    = & \mathbb{P}\big(N_1\rr{R}< C_1 n^q+n^{q_1}\big|D_u\rr{r}\cap\{N_1\rr{r}< 2\lfloor 2C_1 n^q\rfloor\}\big)\mathbb{P}\big(N_1\rr{r}< 2\lfloor 2C_1 n^q\rfloor \big|D_u\rr{r}\big)\\
    +&\mathbb{P}\big(N_1\rr{R} <C_1 n^q+n^{q_1}\big|D_u\rr{r}\cap\{N_1\rr{r}\geq  2\lfloor 2C_1 n^q\rfloor \}\big)\mathbb{P}\big(N_1\rr{r}\geq 2\lfloor 2C_1 n^q \rfloor \big|D_u\rr{r}\big).
    \end{aligned}
\end{equation}
We will now consider the above two terms separately. For the first term, we now have a bound on $N_1\rr{r}$, as we condition on the event $\lfloor 2C_1 n^q\rfloor \leq  N_1\rr{r}<   2\lfloor 2C_1 n^q\rfloor$. However, we can not use Markov's inequality as before as our inequality points in the wrong direction. Thus, we will instead use Chebyshev's inequality. In Lemma \ref{lemma mean and var bounds} the bounds (3) and (4) imply that there exists a constant $C>0$ (which is independent of $r$) such that 
$$\mathbb{E}[N_1\rr{R}|N_1\rr{1},\dotsc,N_1\rr{r}]\geq  2C_1 n^q  -C n^{1-2q}\log^4{n}\quad\text{and}\quad \mathbb{V}(N_1\rr{R}|N_1\rr{1},\dotsc,N_1\rr{r})\leq C n^{1-q}\log^4{n}.$$
Then
$$\mathbb{E}[N_1\rr{R}|N_1\rr{1},\dotsc,N_1\rr{r}]-(C_1 n^q+n^{q_1})\geq C_1 n^q-C n^{1-2q}\log^4{n}-n^{q_1}>0,$$
and we can use Chebyshev's inequality to establish that
\begin{equation} \label{eq A_u^r number 1}
    \begin{aligned}
        &\mathbb{P}\big(N_1\rr{R}< C_1 n^q+n^{q_1}\big|N_1\rr{1},\dotsc,N_1\rr{r}\big)\\
    \leq & \mathbb{P}\big(\big|N_1\rr{R}-\mathbb{E}[N_1\rr{R}|N_1\rr{1},\dotsc,N_1\rr{r}]\big|> C_1 n^q-C n^{1-2q}\log^4{n}-n^{q_1}\big|N_1\rr{1},\dotsc,N_1\rr{r}\big)\\
    \leq &\frac{\mathbb{V}[N_1\rr{R}|N_1\rr{1},\dotsc,N_1\rr{r}]}{(C_1 n^q-C n^{1-2q}\log^4{n}-n^{q_1})^2}\\
    \leq& \frac{C n^{1-q}\log^4{n}}{(C_1 n^q-C n^{1-2q}\log^4{n}-n^{q_1})^2}\leq \frac{1}{4}.
    \end{aligned}
\end{equation}
Lastly, we used that the fraction is of order $n^{1-3q}\log^4{n}$ and thus it approaches zero as $n\rightarrow\infty$.  For the second term of (\ref{eq A_u^r}) we want to show that $\mathbb{P}(N_1\rr{r}\geq 2\lfloor 2C_1 n^q\rfloor |D_u\rr{r})$ is small. Note that $D_u\rr{r}$ contains the event $N_1\rr{r-1}< \lfloor 2C_1 n^q\rfloor $ which makes it unlikely that $N_1\rr{r}\geq 2\lfloor 2C_1 n^q\rfloor$. When $N_1\rr{r-1}< \lfloor 2C_1 n^q\rfloor$ we have:
\begin{equation} \label{eq A_u^r number 2}
    \begin{aligned}
        &\mathbb{P}\big(N_1\rr{r}\geq2\lfloor 2C_1 n^q\rfloor \big|N_1\rr{1},\dotsc,N_1\rr{r-1},\;N_1\rr{r}\geq \lfloor 2C_1 n^q\rfloor \big)\\
    = &\sum_{x=2\lfloor 2C_1n^q\rfloor}^n\mathbb{P}\big(N_1\rr{r}=x\big|N_1\rr{1},\dotsc,N_1\rr{r-1},\;N_1\rr{r}\geq  \lfloor 2C_1 n^q\rfloor \big)\\
    =&\sum_{x=2\lfloor 2C_1 n^q\rfloor}^n \frac{\mathbb{P}\big(N_1\rr{r}=x\big|N_1\rr{1},\dotsc,N_1\rr{r-1}\big)}{\mathbb{P}\big(N_1\rr{r}\geq  \lfloor 2C_1 n^q \rfloor\big|N_1\rr{1},\dotsc,N_1\rr{r-1}\big)}\\
    \leq &\sum_{x=2\lfloor 2C_1 n^q\rfloor}^n \frac{\mathbb{P}\big(N_1\rr{r}=2\lfloor 2C_1 n^q\rfloor \big|N_1\rr{1},\dotsc,N_1\rr{r-1}\big)}{\mathbb{P}\big(N_1\rr{r}= \lfloor 2C_1 n^q\rfloor \big|N_1\rr{1},\dotsc,N_1\rr{r-1}\big)}.
    \end{aligned}
\end{equation}
In the last inequality, we made the nominator larger and the denominator smaller. Here we used that $N_1\rr{r}|N_1\rr{1},\dotsc,N_1\rr{r-1}$ has a Binomial distribution and as $N_1\rr{r-1}<\lfloor 2C_1 n^q\rfloor$ the mode of its probability mass function is smaller than $2\lfloor 2C_1 n^q\rfloor$. Now, if $X\sim\text{Binomial}(n,p)$ then
\begin{align*}
    \frac{\mathbb{P}\big(X=2y\big)}{\mathbb{P}\big(X=y\big)}&=\frac{\binom{N}{2y}p^{2y}(1-p)^{n-2y}}{\binom{n}{k}p^{y}(1-p)^{n-y}}=\frac{y!}{(2y)!}\frac{(n-y)!}{(n-2y)!}(p)^{y}(1-p)^{-y}\\
    &\leq \frac{e\cdot y\cdot \big(\tfrac{y}{e}\big)^{y}}{e\cdot \big(\tfrac{2y}{e}\big)^{2y}}(np)^{y}\exp\big(-y\log(1-p)\big)\\
    &\leq \frac{y}{\big(\tfrac{4}{e}\big)^{y}y^{y}}(np)^{y}\exp\big(yp\big)=y\Big(\frac{4}{e}\Big)^{-y}\left(\frac{np}{y}\right)^y\exp(yp).
\end{align*}
In the above we have used that $\log(1-p)\geq -p$ and that $e\big(\tfrac{n}{e}\big)^{n}\leq n!\leq en\big(\tfrac{n}{e}\big)^n$. 
Furthermore, coupling this with (\ref{eq A_u^r number 2}) we note that in our setup we have that the number of trials equals $n-\lfloor \log{n}\rfloor T^{r-2}$ and the probability parameter equals
\begin{equation}
    p_1\big(n-T\rr{r-2},N_1\rr{r-1}\big)\leq \frac{N_1\rr{r-1}}{n-T\rr{r-2}}\leq Cn^{q-1},
\end{equation}
for a constant $C>0$ chosen large enough. This implies that the factor in the middle becomes less than one, so in (\ref{eq A_u^r number 2}) we get
\begin{align*}
    &\mathbb{P}\big(N_1\rr{r}>  2\lfloor 2C_1 n^q\rfloor \big|N_1\rr{1},\dotsc,N_1\rr{r-1},\;N_1\rr{r}\geq \lfloor 2C_1 n^q\rfloor \big)\\
    \leq &\sum_{x=2\lfloor 2C_1 n^q\rfloor}^n\frac{\mathbb{P}\big(N_1\rr{r}=2\lfloor 2C_1 n^{q}\rfloor \big|N_1\rr{1},\dotsc,N_1\rr{r-1}\big)}{\mathbb{P}\big(N_1\rr{r}= \lfloor 2C_1 n^q\rfloor \big|N_1\rr{1},\dotsc,N_1\rr{r-1}\big)}\\
    \leq & n2C_1 n^{q}\left(\frac{4}{e}\right)^{-2C_1 n^q}\exp\left(2C_1 Cn^{2q-1}\right)\leq \frac{1}{4}.
\end{align*}
Lastly, we used that $n^{1+q}(4/e)^{-2C_1 n^q}\rightarrow 0$ and $\exp(2C_1 Cn^{2q-1})\rightarrow 1$ when $n\rightarrow\infty$. Using this and (\ref{eq A_u^r number 1}) in (\ref{eq A_u^r}) we get
\begin{equation*}
    \begin{aligned}
        &\mathbb{P}\big(N_1\rr{R}\leq C_1 n^q+n^p\big|A_u\rr{r}\big)\\
    \leq & \mathbb{E}\big[\mathbb{P}\big(N_1\rr{R}\leq C_1 n^q+n^p\big|N_1\rr{1},\dotsc,\;N_1\rr{r}\big)\big|A_u\rr{r}\cap\{N_1\rr{r}\leq 2\lfloor 2C_1 n^q\rfloor\}\big)\\
    &+\mathbb{E}\big[\mathbb{P}\big(N_1\rr{r}> 2\lfloor 2C_1 n^q\rfloor \big|N_1\rr{1},\dotsc,N_1\rr{r-1},\;N_1\rr{r}\geq \lfloor 2C_1 n^q\rfloor\big)\big|A_u\rr{r}\big]\leq \frac{1}{4}+\frac{1}{4}=\frac{1}{2}. 
    \end{aligned}
\end{equation*}
This finally implies 
\begin{align*}
    \mathbb{P}\big(N_1\rr{R}>C_1 n^q+n^p\big|A_u\big)=&\frac{\mathbb{P}\big(\big\{N_1\rr{R}>C_1 n^q+n^p\big\}\cap A_u\big)}{\mathbb{P}(A_u)}\\
    =&\frac{\sum_{r=1}^R\mathbb{P}\big(N_1\rr{R}>C_1 n^q+n^p\big|A_u\rr{r}\big)\mathbb{P}(A_u\rr{r})}{\mathbb{P}(A_u\rr{r})}\\
    \geq& \frac{1}{2}\frac{\sum_{r=1}^R\mathbb{P}(A_u\rr{r})}{\mathbb{P}(A_u)}=\frac{1}{2},
\end{align*}
and this implies (\ref{eq liminf A_u}) and finishes the proof. 
\end{proof}

\subsection{Proof of Lemma \ref{lemma three limits}} \label{section proof 2}

The three limits of this lemma are established one at a time. Remember that we use the defined elements of Section \ref{Decomposition in over-constrained regime}, i.e. the unit-propagation procedure elements constructed for the regime with $q>1/3$. 

We will first establish that 
\begin{equation*}
    \lim_{n\rightarrow\infty}\mathbb{P}\big(M_0^{(r)}=0,\; r\in[R],\;B_u,\;B_l\big)=0.
\end{equation*}
\begin{proof}[Proof of Lemma \ref{lemma three limits} (1)]
    Let $c_0>0$ be the constant of Lemma \ref{lemma over-constrained control size of M_1^r} and $C_1>0$ be the constant of Fact \ref{fact bound on S^r}. Remember that $\bar{S}\rr{r}=\lfloor \log{n}\rfloor S\rr{r}$ for $r\in[R]$. As $n(4\lfloor \log{n}\rfloor )^{-1}> C_1n^{1-q}\log{n},$ we note that it is sufficient to establish that
    \begin{equation*}
\lim_{n\rightarrow\infty}\mathbb{P}\big(M_0^{(r)}=0,\;M_1^{(r)}\geq c_0 n^q,\; r\in[R],\;\bar{S}^{(R-2)}<\tfrac{n}{4} \big)= 0.
\end{equation*}
Recall that for $r\in[R]$ equation (\ref{eq over M_k^r dist}) gives that
$$M_0\rr{r}|\mathcal{M}\rr{r-1}\sim\text{Binomial}\bigg((n-\bar{S}\rr{r-2})^+,\;p_0\big(n-S\rr{r-2},\;(M_1\rr{r-1}-1)^+\big)\bigg),$$
and the function $p_0$, defined in Lemma \ref{lemma technical dist of M}, is given by
$$ p_0\big(n-S\rr{r-2},\;M_1\rr{r-1}\big)=\frac{(M_1\rr{r-1}-1)^+\big((M_1\rr{r-1}-1)^+-1\big)}{4(n-S\rr{r-2})(n-S\rr{r-2}-1)}.$$
Define i.i.d. random variables
$$X\rr{1},\dotsc,X\rr{R}\sim\text{Binomial}\bigg(\Big\lfloor \frac{n}{2}\Big\rfloor ,\;\Big(\frac{c_0n^q-2}{2n}\Big)^2\bigg).$$
The above considerations imply that
\begin{align*}
    &\mathbb{P}\big(M_0^{(r)}=0,\;M_1^{(r)}\geq c_0 n^q,\; r\in[R],\;\bar{S}^{(R-2)}<\tfrac{n}{4} \big)\\
    \leq &\mathbb{P}\big(M_0^{(r)}=0,\;r\in[R],\;M_1^{(r)}\geq c_0 n^q,\; r\in[R-1],\;\bar{S}^{(R-2)}<\tfrac{n}{4} \big)\\
    = & \mathbb{E}\Big[\mathbb{P}\big(M_0\rr{R}=0\big|\mathcal{M}\rr{r-1}\big)\mathds{1}_{\{M_0\rr{r}=0,\;M_1\rr{r}\geq c_0n^q,\;r\in[R-1],\;\bar{S}\rr{R-2}<\frac{n}{4}\}}\Big]\\
    \leq &\mathbb{E}\Big[\mathbb{P}\big(X\rr{R}=0\big)\mathds{1}_{\{M_0\rr{r}=0,\;M_1\rr{r}\geq c_0n^q,\;r\in[R-1],\;\bar{S}\rr{R-2}<\frac{n}{4}\}}\Big]\\
    \leq &\mathbb{P}\big(X\rr{R}=0\big)\mathbb{P}\big(M_0\rr{r}=0,\;M_1\rr{r}\geq c_0n^q,\;r\in[R-1],\;\bar{S}\rr{R-3}\leq \tfrac{n}{4}\big),
\end{align*}
where we lastly use that $\bar{S}\rr{r}$ increases in $r$. Now, the argument can be repeated on the last factor of the above upper bound. Eventually, we then derive that
\begin{align*}
    \mathbb{P}\big(M_0^{(r)}=0,\;M_1^{(r)}\geq c_0n^q\;\forall r\in[R],\;\bar{S}^{(R-2)}<\tfrac{n}{4}\big)\leq \prod_{r=1}^R\mathbb{P}(X\rr{r}=0).
\end{align*}
Let $c>0$ denote a constant satisfying 
$$\lfloor n/2\rfloor \geq cn,\quad R\geq cn^{1-2q}\log{n},\quad \left(\frac{c_0 n^q-2}{2}\right)^2\geq cn^{2q}.$$
Then
\begin{align*}
    \prod_{r=1}^R\mathbb{P}(X\rr{r}=0)&=\left(\left(1-\Big(\frac{c_0 n^q-2}{2n}\Big)^2\right)^{\lfloor n/2\rfloor }\right)^R\\
    &\leq \left(\left(1-\frac{c}{n^{2(1-q)}}\right)^{cn}\right)^{cn^{1-2q}\log{n}}\\
    &=\left(\left(1-\frac{c}{n^{2(1-q)}}\right)^{n^{2(1-q)}}\right)^{c^2\log{n}}\rightarrow 0\quad\text{as }n\rightarrow\infty,
\end{align*}
and this finishes the proof.
\end{proof}
The next limit that will be established is the following
$$\lim_{n\rightarrow\infty}\sum_{r=1}^R\mathbb{P}\big(M_1\rr{r}\geq  \bar{M}_1\rr{r}+2,\;B_u\big)= 0.$$
\begin{proof}[Proof of Lemma \ref{lemma three limits} (2)]
Let $C_0$ be the constant of Lemma \ref{lemma over-constrained control size of M_1^r} and $C_1$ be the constant of Fact \ref{fact bound on S^r}. As $n/2>C_1n^{1-q}\log{n}$ we note that it is sufficient to establish that
    $$\lim_{n\rightarrow\infty}\sum_{r=1}^R\mathbb{P}\big(M_1\rr{r}\geq\bar{M}_1\rr{r}+2,\;M_1\rr{r}\leq C_0n^q,\;S\rr{r-1}<\tfrac{n}{2}\big)=0.$$
    Let $r\in[R]$ be fixed and consider the conditional probability
    $$\mathbb{P}\big(M_1\rr{r}\geq\bar{M}_1\rr{r}+2,\;M_1\rr{r}\leq C_0n^q,\;S\rr{r-1}<\tfrac{n}{2}\big|\mathcal{M}\rr{r}\big).$$
    Let $V_1,\dotsc,V_{M_1\rr{r}}$ be the variables of the $1$-SAT formula $\Phi_1\rr{r}$. From (\ref{eq over Phi_1^r dist}) we get that these are i.i.d. and uniformly distributed on $[n-S\rr{r-1}]$ when conditioning on $\mathcal{M}\rr{r}$. Thus
\begin{align*}
    &\mathbb{P}\big(M_1^{(r)}\geq \bar{M}_1^{(r)}+2,\;M_1^{(r)}\leq C_0 n^q,\;S^{(r-1)}< \tfrac{n}{2}\;\big|\;\mathcal{M}\rr{r}\big)\\
    =&\mathbb{P}\bigg(\bigcup_{v_1,v_2}\;\;\bigcup_{\substack{j_1,j_2,j_3,j_4\\\text{distinct}}}\big\{V_{j_1}=V_{j_2}=v_1,\;V_{j_3}=V_{j_4}=v_2\big\}\bigg|\;\mathcal{M}\rr{r}\bigg)\mathds{1}_{\{M_1^{(r)}\leq C_0 n^q,\;S^{(r-1)}< \frac{n}{2}\}}\\
    \leq & \sum_{v_1,v_2}\;\;\sum_{\substack{j_1,j_2,j_3,j_4\\ \text{distinct}}}\;\;\mathbb{P}\big(V_{j_1}=V_{j_2}=v_1,\;V_{j_3}=V_{j_4}=v_2\;\big|\;\mathcal{M}\rr{r}\big)\mathds{1}_{\{M_1^{(r)}\leq C_0 n^q,\;S^{(r-1)}< \frac{n}{2}\}}\\
    \leq & (n-S^{(r-1)})^2(M_1^{(r)})^4\left(\frac{1}{n-S^{(r-1)}}\right)^4\mathds{1}_{\{M_1^{(r)}\leq C_0 n^q,\;S^{(r-1)}< \frac{n}{2}\}}\\
    \leq &\frac{(C_0 n^q)^4}{(n/2)^2}=\frac{C}{n^{2(1-2q)}},
\end{align*}
where we sum over $v_1,v_2\in[n-S^{(r-1)}]$ and $j_1,j_2,j_3,j_4\in [M_1^{(r)}]$ and $C=4C_0 ^4$. Therefore, we get
\begin{align*}
    &\sum_{r=1}^R\mathbb{P}\big(M_1^{(r)}\geq \bar{M}_1^{(r)}+2,\;M_1^{(r)}\leq C_0 n^q,\;S^{(r-1)}< \tfrac{n}{2}\big)\\
    =&\sum_{r=1}^R\mathbb{E}\big[\mathbb{P}\big(M_1^{(r)}\geq \bar{M}_1^{(r)}+2,\;M_1^{(r)}\leq C_0 n^q,\;S^{(r-1)}< \tfrac{n}{2}\;\big|\;\mathcal{M}\rr{r}\big)\big]\\
    \leq &\sum_{r=1}^R\frac{C}{n^{2(1-2q)}}\leq \frac{C\log{n}}{n^{1-2q}}\rightarrow 0\quad\text{as }n\rightarrow\infty,
\end{align*}
as $1-2q>0$ for $q<1/2$ and this was the claim. 
\end{proof}
The last limit to be established in this section is the following
$$\lim_{n\rightarrow\infty}\sum_{r=1}^R\mathbb{P}\big(M_2\rr{r}<(n-\bar{S}\rr{r-1})^+,\;B_u,\;B_l\big)= 0.$$
\begin{proof}[Proof of Lemma \ref{lemma three limits} (3)]
    Let $C_0$, $c_0$, and $C_1$ be the constants of Lemma \ref{lemma over-constrained control size of M_1^r} and Fact \ref{fact bound on S^r}. As $n(4\lfloor \log{n}\rfloor)^{-1}>C_1n^{1-q}\log{n}$ we note that it is sufficient to establish that
    $$\lim_{n\rightarrow\infty}\sum_{r=1}^R\mathbb{P}\big(M_2\rr{r}<(n-\bar{S}\rr{r-1})^+,\;M_1\rr{r-1}\leq C_0n^q,\;M_1\rr{r-1}\geq c_0n^q,\;\bar{S}\rr{r-2}<\tfrac{n}{4}\big)=0.$$
    Let $r\in[R]$ be fixed. We now consider the conditional distribution given $\mathcal{M}\rr{r-1}$ and assume that $M_1\rr{r-1}\leq C_0n^q$, $M_1\rr{r-1}\geq c_0n^q$ and $\bar{S}\rr{r-2}<n/4$ which also implies that $S\rr{r-2}<n/4$. Using the definition of equation (\ref{eq over M_k^r dist}), and the definition of $p_2$ in Lemma \ref{lemma technical dist of M}, we get
    \begin{equation} \label{eq over cond exp of M_2^r}
        \begin{aligned}
            \mathbb{E}[M_2\rr{r}|\mathcal{M}\rr{r-1}]=&\big(n-\bar{S}\rr{r-2}\big)p_2\big(n-S\rr{r-2},\,(M_1\rr{r-1}-1)^+\big)\\
            =& \big(n-\bar{S}\rr{r-2}\big)\frac{\big(n-S\rr{r-2}-(M_1\rr{r-1}-1)\big)\big(n-S\rr{r-2}-(M_1\rr{r-1}-1)-1\big)}{\big(n-S\rr{r-2}\big)\big(n-S\rr{r-2}-1\big)}\\
            = &\big(n-\bar{S}\rr{r-2}\big)\Big(1-\frac{M_1\rr{r-1}-1}{n-S\rr{r-2}}\Big)\Big(1-\frac{M_1\rr{r-1}-1}{n-S\rr{r-2}-1}\Big)\\
            \geq & \big(n-\bar{S}\rr{r-2}\big)\Big(1-\frac{C_0n^q}{n/2}\Big)^2\\
            \geq & \big(n-\bar{S}\rr{r-2}\big)\big(1-\tfrac{1}{2}Cn^{q-1}\big)^2,
        \end{aligned}
    \end{equation}
    where $C=4C_0$. Using the above we get
    \begin{equation}\label{eq over M_2^r mean - value}
        \begin{aligned}
            \mathbb{E}[M_2\rr{r}|\mathcal{M}\rr{r-1}]-\big(n-\bar{S}\rr{r-1}\big)=&\mathbb{E}[M_2\rr{r}|\mathcal{M}\rr{r-1}]-\Big(n-\bar{S}\rr{r-2}-\lfloor \log{n}\rfloor \big(M_1\rr{r-1}-1\big)\Big)\\
            \geq &\big(n-\bar{S}\rr{r-2}\big)\Big((1-\tfrac{1}{2}Cn^{q-1})^2-1\Big)+\lfloor\log{n}\rfloor \big(M_1\rr{r-1}-1\big)\\
            \geq &\frac{n}{2}\big(-Cn^{q-1}\big)+\lfloor \log{n}\rfloor \big(c_0n^q-1\big)\geq Cn^q,
        \end{aligned}
    \end{equation}
    The conditional variance can also be bounded (again when $M_1\rr{r-1}\leq C_0n^q$ and $\bar{S}\rr{r-1}<n/4$). To do so we again make use of (\ref{eq over M_k^r dist}) and the calculations in (\ref{eq over cond exp of M_2^r})
    \begin{equation} \label{eq over var bound M_2^r}
        \begin{aligned}
            \mathbb{V}(M_2^{(r)}|\mathcal{M}\rr{r-1})=&  \big(n-\bar{S}\rr{r-2}\big)p_2\big(n-S\rr{r-2},\,(M_1\rr{r-1}-1)^+\big)\Big(1-p_2\big(n-S\rr{r-2},\,(M_1\rr{r-1}-1)^+\big)\Big)\\
        \leq &n\big(1-p_2\big(n-S\rr{r-2},\,(M_1\rr{r-1}-1)^+\big)\big)\\
        \leq& n\big(1-\big(1-\tfrac{1}{2}Cn^{q-1}\big)^2\big)\leq Cn^{q}.
        \end{aligned}
    \end{equation}
    Using (\ref{eq over M_2^r mean - value}) and (\ref{eq over var bound M_2^r}) along with Chebyshev's inequality we get
\begin{equation*}
    \begin{aligned}
        \mathbb{P}\big(M_2^{(r)}<n-\bar{S}\rr{r-1}\big|\mathcal{M}\rr{r-1}\big)\leq& \mathbb{P}\left(\big|M_2^{(r)}-\mathbb{E}\big[M_2^{(r)}\big|\mathcal{M}\rr{r-1}\big]\big|>Cn^q\;\Big|\;\mathcal{M}\rr{r-1}\right)\\
        \leq& \frac{\mathbb{V}(M_2^{(r)}|{M\rr{r-1}})}{(Cn^q)^2}\leq \frac{1}{Cn^q}.
    \end{aligned}
\end{equation*}
This finally implies that
    \begin{align*}
    &\mathbb{P}\big(M_2\rr{r}<(n-\bar{S}\rr{r-1})^+,\;M_1\rr{r-1}\leq C_0n^q,\;M_1\rr{r-1}\geq c_0n^q,\;\bar{S}\rr{r-2}<\tfrac{n}{4}\big)\\
    = &\mathbb{E}\Big[\mathbb{P}\big(M_2^{(r)}<n-\bar{S}^{(r-1)}\big|\mathcal{M}\rr{r-1}\big)\mathds{1}_{\{M_1^{(r-1)}\leq C_0 n^q,\;M_1^{(r-1)}\geq c_0 n^q,\;\bar{S}^{(r-1)}< \frac{n}{4}\}}\Big]\\
    \leq &\frac{1}{Cn^q}\mathbb{P}\big(M_1^{(r-1)}\leq C_0 n^q,\;M_1^{(r-1)}\geq \lambda n^q,\;S^{(r-1)}< n/4\big)\leq \frac{1}{Cn^q},
\end{align*}
and thus we get the limit
\begin{align*}
    \sum_{r=1}^R\mathbb{P}\big(M_2\rr{r}&<(n-\bar{S}\rr{r-1})^+,\;M_1\rr{r-1}\leq  C_0n^q,\;M_1\rr{r-1}\geq c_0n^q,\;\bar{S}\rr{r-2}<\tfrac{n}{4}\big)\\
    \leq& \sum_{r=1}^R\frac{1}{Cn^q}\leq \frac{1}{C}\cdot n^{1-3q}\log{n}\rightarrow 0\quad\text{as }n\rightarrow\infty,
\end{align*}
where we use that $1-3q<0$ when $q>1/3$.
\end{proof}

\subsection{Proof of Lemma \ref{lemma five limits}} \label{section proof 3}
The four limits of this lemma are established one at a time. Remember that we use the defined elements of Section \ref{Decomposition in under-constrained regime}, i.e. the unit-propagation procedure elements constructed for the case $q<1/3$. 
To begin with we want to show that
$$\lim_{n\rightarrow\infty}\mathbb{P}\big(M_1\rr{r}\leq n^q\log{n},\;r\in[R]\big)=1.$$

\begin{proof}[Proof of Lemma \ref{lemma five limits} (1)]
For each $r\in[R]$ we use (\ref{eq under M_k^r dist}) and the definition of $p_1$ in Lemma \ref{lemma technical dist of M} to get that
\begin{align*}
    \mathbb{E}[M_1\rr{r}|\mathcal{M}\rr{r-1}]&=\big(n-S\rr{r-1}\big)p_1\big(n-S\rr{r-2},\;M_1\rr{r-1}\big)\\
    &=\frac{\big(n-S\rr{r-2}-M_1\rr{r-1}\big)^2}{\big(n-S\rr{r-2}\big)\big(n-S\rr{r-2}-1\big)}\cdot M_1\rr{r-1}\leq M_1\rr{r-1}.
\end{align*}
The last inequality is obviously true when $M_1\rr{r-1}\geq 1$ and when $M_1\rr{r-1}=0$ both sides of the inequality equals zero. Now, by letting $M_1\rr{r}=M_1\rr{R}$ and $\mathcal{M}\rr{r}=\mathcal{M}\rr{R}$ for $r>R$ we can extend our sequence and consider $\{M_1\rr{r}\}_{r\in\mathbb{N}_0}$ which then becomes a super-martingale w.r.t. the filtration $\{\mathcal{M}\rr{r}\}_{r\in\mathbb{N}_0}$. Define the stopping time
$$\tau=\min\{r\in\mathbb{N}_0\,:\,M_1\rr{r}=0\text{ or }M_1\rr{r}>n^q\log{n}\}.$$
Let $C_0$ be the constant of Lemma \ref{lemma under-constrained alternative result}. As our sequence $\{M_1\rr{r}\}_{r\in\mathbb{N}_0}$ is a non-negative super-martingale we can make use of the optional sampling theorem (Thm. 28, Chapter V in \cite{Dellacherie11}). Hereby we get that
$$C_0n^q\geq \mathbb{E}[M_1\rr{0}]\geq \mathbb{E}[M_1\rr{\tau}]\geq n^q\log{n}\cdot \mathbb{P}\big(M_1\rr{\tau}>n^q\log{n}\big).$$
Rearranging the above terms implies that 
$$\mathbb{P}\big(M_1\rr{\tau}>n^q\log{n}\big)\leq C_0\log^{-1}{n}.$$ 
As the sequence terminates when hitting zero this establishes the result.
\end{proof}

The next limit to establish is the following
$$\lim_{n\rightarrow\infty}\mathbb{P}\big(\Phi_1\rr{r}\inSAT,\;r\in[R]\,\big|\,M_1\rr{r}\leq n^q\log{n},\;r\in[-1,R]\big)= 1.$$

\begin{proof}[Proof of Lemma \ref{lemma five limits} (2)]
We will use that when $M_1\rr{r}\leq n^q\log{n}$ for all $r\in[-1,R]$ then $S\rr{r}\leq\tfrac{n}{2}$ for all $r\in[0,R]$. We will further use that if $X,Y,Z$ are random variables then
\begin{equation} \label{L3 independence relations}
    X\ind (Y,Z)\Rightarrow (X\ind Y)|Z,\quad\text{and}\quad (X\ind Y)|Z\Rightarrow X|(Y,Z)\eqdist X|Z.
\end{equation}
From (\ref{eq under Phi dist}) we got that
\begin{align*}
    \Phi_1\rr{r}\ind \Phi_2\rr{r}\;|\;\mathcal{M}\rr{r},\quad (r\in[R]).
\end{align*}
The random function $\Psi_2\rr{r}$ is constructed from $\Phi_1\rr{r}$ and $\Phi_2\rr{r}$ , but we noticed in (\ref{eq under ind of Phi and Psi}) that $\Psi_2\rr{r}$ and $\Phi_1\rr{r}$ are independent given $\mathcal{M}\rr{r}$. From this point and on all remaining random objects are constructed from $\Psi_2\rr{r}$ and from $\mathcal{L}\rr{r}$, which is deterministic given $\mathcal{M}\rr{r}$, and then also from random objects that are defined independently of $\Phi_1\rr{r}|\mathcal{M}\rr{r}$. This implies that
\begin{align*}
    \Phi_1\rr{r}\ind \big[(M_k\rr{r+1})_{k\in K},\dotsc,(M_k\rr{R})_{k\in K},\,\Phi_1\rr{r+1},\dotsc,\Phi_1\rr{R}\big]\;\big|\mathcal{M}\rr{r},\quad (r\in[R]). 
\end{align*}
Thus, the first implication of (\ref{L3 independence relations}) implies that
\begin{align*}
    \Phi_1\rr{r}\ind \big(\Phi_1\rr{r+1},\dotsc,\Phi_1\rr{R}\big)\big|\mathcal{M}\rr{R},\quad(r\in[R]),
\end{align*}
and the second implication of (\ref{L3 independence relations}) gives that 
$$\Phi_1\rr{r}|\mathcal{M}\rr{r}\eqdist \Phi_1\rr{r}|\mathcal{M}\rr{R},\quad(r\in[R]).$$
From (\ref{eq under Phi dist}) we have that
$$\Phi_1\rr{r}|\mathcal{M}\rr{r}\sim F_1\big(n-S\rr{r-1},\,M_1\rr{r}\big),\quad(r\in[R]),$$
and Lemma \ref{lemma 1-SAT} states that if $\Phi_1\sim F_1(n,m)$ then $\mathbb{P}\big(\Phi_1\inSAT\big)\geq \big(1-\frac{m}{n}\big)^m$. Thus, when $M_1\rr{r}\leq n^q\log{n}$ it holds that
\begin{align*}
    \mathbb{P}\big(\Phi_1\rr{r}\inSAT\big|\mathcal{M}\rr{r}\big)\geq \left(1-\frac{n^q\log{n}}{n/2}\right)^{n^q\log{n}}.
\end{align*}
Combining the above we get that
\begin{equation*}
    \begin{aligned}
        &\mathbb{P}\big(\Phi_1\rr{r}\inSAT,\; r\in[R]\,\big|\,M_1\rr{r}\leq n^q\log{n},\;r\in[-1,R]\big)\\
        =&\mathbb{E}\Big[\mathbb{P}\big(\Phi_1\rr{r}\inSAT,\; r\in [R]\big|\mathcal{M}\rr{R}\big)\Big|M_1\rr{r}\leq n^q\log{n},\; r\in[-1,R]\Big]\\
        =&\mathbb{E}\Big[\mathbb{P}\big(\Phi_1\rr{1}\inSAT\big|\mathcal{M}\rr{1}\big)\mathbb{P}\big(\Phi_1\rr{r}\inSAT,\; r\in[2,R]\big|\mathcal{M}\rr{R}\big)\Big|M_1\rr{r}\leq n^q\log{n},\; r\in[-1,R]\Big]\\
        \geq &\left(1-\frac{n^q\log{n}}{n/2}\right)^{n^q\log{n}}\mathbb{P}\big(\Phi_1\rr{r}\inSAT,\; r\in[2,R]\big|M_1\rr{r}\leq n^q\log{n},\; r\in[-1,R]\big)\\
        \geq&\dotsb\geq \left(\left(1-\frac{n^q\log{n}}{n/2}\right)^{n^q\log{n}}\right)^{R}\\
        \geq& \left(\left(1-\frac{2}{\frac{n^{1-q}}{\log{n}}}\right)^{\frac{n^{1-q}}{\log{n}}}\right)^{\frac{1}{\log{n}}}\rightarrow 1\quad\text{as }n\rightarrow\infty
    \end{aligned}
\end{equation*}
which was the claim. 
\end{proof}

Next up, we will establish that 
$$\lim_{n\rightarrow\infty}\mathbb{P}\big(M_0\rr{r}=0,\;r\in[R]\,\big|\,M_1\rr{r}\leq n^q\log{n},\;r\in[-1,R]\big)=1,$$

\begin{proof}[Proof of Lemma \ref{lemma five limits} (3)]
In (\ref{eq under M_k^r dist}) it is stated that
\begin{equation} \label{eq M_k^r in L2)}
    (M_k\rr{r})_{k\in K}\big|\mathcal{M}\rr{r-1}\sim\text{Multinomial}\Big(n-S\rr{r-1},\,{p}\big(n-S\rr{r-2},\,M_1\rr{r-1}\big)\Big),\quad(r\in[R]).
\end{equation}
We will use the following fact:
$$\text{If }(X_1,\dotsc,X_n)\sim \text{Multinomial}\big(n,(p_1,\dotsc,p_n)\big) \text{ then } X_i|X_j\sim\text{Binomial}\big(n-X_j,\,\tfrac{p_i}{1-p_j}\big)\text{ for }i\ne j.$$ 
This implies that
\begin{equation} \label{eq L2) M_0^r|M_1^r}
    M_0\rr{r}|\mathcal{M}\rr{r-1},\,M_1\rr{r}\sim\text{Binomial}\left(n-S\rr{r-1}-M_1\rr{r},\,\frac{p_0\big(n-S\rr{r-2},\,M_1\rr{r-1}\big)}{1-p_1\big(n-S\rr{r-2},\,M_1\rr{r-1}\big)}\right),\quad (r\in[R]).
\end{equation}
Now, given $M_1\rr{r}\leq n^q\log{n}$ for all $r\in[-1,R]$ we get that $S\rr{r}\leq n/4$ for all $r\in[0,R]$ and using the definitions of $p_0$ and $p_1$ given in Lemma \ref{lemma technical dist of M} we get for each $r\in[R]$:
\begin{equation} \label{eq L2 bound on p_0 and p_1}
    \begin{aligned}
    p_0\big(n-S\rr{r-1},\,M_1\rr{r-1}\big)&\leq \frac{(n^q\log{n})^2}{(\tfrac{3}{4}n)^2},\qquad
    1-p_1\big(n-S\rr{R-1},\,M_1\rr{R-1}\big)&\geq 1-\frac{n^q\log{n}}{\tfrac{3}{4}n}\geq \frac{3}{4}.
    \end{aligned}
\end{equation}
Using (\ref{eq under M_k^r dist}) we also note that there exists functions $g\rr{r}$, $r\in[R]$ such that for $r\in[R-1]$ we have that
\begin{equation} \label{eq L2) independence}
    \begin{aligned}
        &\mathbb{P}\big(M_1\rr{s}=m\rr{s},\;s\in[r+1,R]\big|\mathcal{M}\rr{r-1},\,M_1\rr{r},\,M_0\rr{r}\big)\\
        =&\mathbb{E}\big[\mathbb{P}\big(M_1\rr{R}=m\rr{R}\big|\mathcal{M}\rr{R-1}\big)\mathds{1}_{\{M_1\rr{s}=m\rr{s},\;s\in[r+1,R-1]\}}\big|\mathcal{M}\rr{r-1},\,M_1\rr{r},\,M_0\rr{r}\big]\\
        =&g\rr{R}(M_1\rr{-1},\dotsc,M_1\rr{r},m\rr{r+1},\dotsc,m\rr{R})\mathbb{P}\big(M_1\rr{s}=m\rr{s},\;s\in[r+1,R-1]\big|\mathcal{M}\rr{r-1},\,M_1\rr{r},\,M_0\rr{r}\big)\\
        =&\dotsb=\prod_{s=r+1}^Rg\rr{s}\big(M_1\rr{-1},\dotsc,M_1\rr{r},m\rr{r+1},\dotsc,m\rr{s}\big).
    \end{aligned}
\end{equation}
This implies that $(M_1\rr{r+1},\dotsc,M_1\rr{R})$ is independent of $M_0\rr{r}$ when conditioning on $\mathcal{M}\rr{r-1}$ and $M_1\rr{r}$. Now using (\ref{eq L2) M_0^r|M_1^r}) and (\ref{eq L2 bound on p_0 and p_1}) we get that
\begin{equation*} 
    \begin{aligned}
        &\mathbb{P}\big(M_0\rr{r}=0,\; r\in[R]\,\big|\,M_1\rr{r}\leq n^q\log{n},\; r\in[-1,R]\big)\\
    =&\mathbb{E}\Big[\mathbb{P}\big(M_0\rr{R}=0\big|\mathcal{M}\rr{R-1},\,M_1\rr{R}\big)\mathds{1}_{\{M_0\rr{r}=0,\; r\in[R-1]\}}\,\Big|\,M_1\rr{r}\leq n^q\log{n},\; r\in[-1,R]\Big]\\
    \geq &\mathbb{E}\bigg[\Big(1-\frac{(n^q\log{n})^2}{(\tfrac{1}{2})^3n^2}\Big)^{n}\mathds{1}_{\{M_0\rr{r}=0,\; r\in[R-1]\}}\bigg|M_1\rr{r}\leq n^q\log{n},\; r\in[-1,R]\bigg]\\
    =&\left(1-\frac{(n^q\log{n})^2}{(\tfrac{1}{2})^3n^2}\right)^{n}\mathbb{P}\big(M_0\rr{r}=0,\; r\in[R-1]\big|M_1\rr{r}\leq n^q\log{n},\; r\in[-1,R]\big).
    \end{aligned}
\end{equation*}
Then using (\ref{eq L2) independence}) the above argument can be repeated on the last factor above
\begin{equation*}
    \begin{aligned}
        &\mathbb{P}\big(M_0\rr{r}=0,\; r\in[R-1]\big|M_1\rr{r}\leq n^q\log{n},\; r\in[-1,R]\big)\\
        =&\mathbb{E}\big[\mathbb{P}\big(M_0\rr{R-1}=0\big|\mathcal{M}\rr{R-2},\;M_1\rr{R-1},\;M_1\rr{R}\big)\mathds{1}_{\{M_0\rr{r}=0,\;r\in[R-2]\}}\big|M_1\rr{r}\leq n^q\log{n},\; r\in[-1,R]\big]\\
        =&\mathbb{E}\big[\mathbb{P}\big(M_0\rr{R-1}=0\big|\mathcal{M}\rr{R-2},\;M_1\rr{R-1}\big)\mathds{1}_{\{M_0\rr{r}=0,\;r\in[R-2]\}}\big|M_1\rr{r}\leq n^q\log{n},\; r\in[-1,R]\big]\\
        \geq &\left(1-\frac{(n^q\log{n})^2}{(\tfrac{1}{2})^3n^2}\right)^{n}\mathbb{P}\big(M_0\rr{r}=0,\; r\in[R-2]\big|M_1\rr{r}\leq n^q\log{n},\; r\in[-1,R]\big).
    \end{aligned}
\end{equation*}
Repeating the above $R$ times in total we eventually arrive at the expression
\begin{equation*}
\begin{aligned}
    &\mathbb{P}\big(M_0\rr{r}=0,\; r\in[R]\,\big|\,M_1\rr{r}\leq n^q\log{n},\; r\in[-1,R]\big)\\
    \geq&\left(1-\frac{(n^q\log{n})^2}{(\tfrac{1}{2})^3n^2}\right)^{Rn}\geq \left(\left(1- \frac{2^3}{\frac{n^{2(1-q)}}{\log^2{n}}}\right)^{\frac{n^{2(1-q)}}{\log^2{n}}}\right)^{\frac{1}{\log{n}}}\rightarrow 1,\quad\text{as }n\rightarrow\infty,
    \end{aligned}
\end{equation*}
which was the claim.
\end{proof}

Next, we will establish that
$$\mathbb{P}\big(M_1\rr{R}=0\big)\rightarrow 1\quad\text{as }n\rightarrow\infty,$$
i.e. we will now establish that our process of $1$-clauses terminates in less than $R$ rounds w.h.p. We will show this by proving that our recursive sequence of Binomial random variables can be approximated by a recursive sequence of Poisson random variables. Afterwards, it is proven that the recursive sequence of Poisson random variables terminates. 

\begin{lemma} \label{lemma Poisson approximation sequence}
    Let $(X\rr{r})_{r\in[-1,R]}$ be a sequence of random variables where $X\rr{-1}=M_1\rr{-1}$, $X\rr{0}=M_1\rr{0}$ and $X\rr{r}|X\rr{r-1}\sim\text{Poisson}\big(X\rr{r-1}\big)$ for $r\in[R].$ Then for $x\rr{-1},x\rr{0},\dotsc,x\rr{R}\in[0,\lfloor n^q\log{n}\rfloor]$ it holds that
    \begin{align*}
        &\mathbb{P}\big(M_1\rr{r}=x\rr{r},\; r\in[R]\big|M_1\rr{-1}=x\rr{-1},\,M_1\rr{r}=x\rr{0}\big)\\
    \geq& \mathbb{P}\big(X\rr{r}=x\rr{r},\; r\in[R]\big|X\rr{-1}=x\rr{-1},X\rr{0}=x\rr{0}\big)\cdot E(n),
    \end{align*}
    where $E$ is a function satisfying that $\lim_{n\rightarrow\infty}E(n)=1$. 
\end{lemma}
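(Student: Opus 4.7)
The idea is to factor both joint probabilities over rounds, apply a quantitative Poisson approximation to each round's Binomial, and sum the per-round errors.

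\emph{Step 1 (Markov factorisation).} By (\ref{eq under M_k^r dist}), $M_1\rr{r}|\mathcal{M}\rr{r-1}\sim\text{Binomial}(N_r,p_r)$ with $N_r=n-S\rr{r-1}$ and $p_r=p_1(n-S\rr{r-2},x\rr{r-1})$. Since $S\rr{r-1}=x\rr{-1}+x\rr{0}+\dotsb+x\rr{r-1}$ is a deterministic function of the $M_1$-history, the same Binomial form persists when we condition only on that history -- this is exactly the argument used inside (\ref{eq L2) independence}). Consequently
\begin{equation*}
\mathbb{P}\big(M_1\rr{r}=x\rr{r},\,r\in[R]\,\big|\,M_1\rr{-1}=x\rr{-1},\,M_1\rr{0}=x\rr{0}\big)=\prod_{r=1}^R\binom{N_r}{k_r}p_r^{k_r}(1-p_r)^{N_r-k_r},
\end{equation*}
with $k_r:=x\rr{r}$, and the Poisson joint factors analogously as $\prod_{r=1}^R\lambda_r^{k_r}e^{-\lambda_r}/k_r!$ with $\lambda_r:=x\rr{r-1}$.

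\emph{Step 2 (Per-round Poisson approximation).} Using the identity $n-S\rr{r-2}-x\rr{r-1}=N_r$ together with the definition of $p_1$ from Lemma \ref{lemma technical dist of M}, one computes
\begin{equation*}
\alpha_r:=\frac{N_rp_r}{\lambda_r}=\frac{N_r^2}{(n-S\rr{r-2})(n-S\rr{r-2}-1)},\qquad \beta_r:=1-\alpha_r\leq \frac{3\,x\rr{r-1}}{n-S\rr{r-2}}.
\end{equation*}
The crucial point is that $\beta_r$ is controlled by the \emph{increment} $x\rr{r-1}$ rather than by the cumulative sum $S\rr{r-1}$. Writing the per-round ratio of Binomial to Poisson mass at $k_r$ as
\begin{equation*}
\frac{\binom{N_r}{k_r}p_r^{k_r}(1-p_r)^{N_r-k_r}}{\lambda_r^{k_r}e^{-\lambda_r}/k_r!}=\frac{N_r!}{(N_r-k_r)!\,N_r^{k_r}}\cdot\alpha_r^{k_r}\cdot(1-p_r)^{N_r-k_r}\cdot e^{\lambda_r}
\end{equation*}
and applying the standard inequalities $N_r!/\big((N_r-k_r)!\,N_r^{k_r}\big)\geq 1-k_r^2/(2N_r)$, $\log(1-p_r)\geq -p_r-p_r^2$ and $\log\alpha_r\geq -\beta_r-\beta_r^2$, then taking logarithms and summing, produces
\begin{equation*}
\log\prod_{r=1}^R\frac{\binom{N_r}{k_r}p_r^{k_r}(1-p_r)^{N_r-k_r}}{\lambda_r^{k_r}e^{-\lambda_r}/k_r!}\geq -\sum_{r=1}^R\frac{k_r^2}{N_r}+\sum_{r=1}^R\beta_r(\lambda_r-k_r)-\sum_{r=1}^R k_r\beta_r^2-\sum_{r=1}^R N_rp_r^2.
\end{equation*}

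\emph{Step 3 (Accumulation).} The hypothesis $x\rr{s}\leq n^q\log n$ for all $s\in[-1,R]$, combined with $R=\lfloor n^{1-2q}\log^{-3}n\rfloor$, yields $S\rr{r}=O(n^{1-q}\log^{-2}n)$, and hence $N_r\geq n/2$ and $\beta_r,\,p_r=O(n^{q-1}\log n)$ for large $n$. Each of the four sums above is then at most a positive power of $1/\log n$: for instance $\sum_r k_r^2/N_r\leq CRn^{2q-1}\log^2 n=O(\log^{-1}n)$, and the remaining three sums admit analogous bounds (the tightest being $\sum_r\beta_r|\lambda_r-k_r|\leq CRn^{q-1}\log n\cdot n^q\log n=O(\log^{-1}n)$). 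Setting $E(n)$ equal to the exponential of minus the total error yields $E(n)\to 1$ and concludes the proof.

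\emph{Main obstacle.} The algebraic cancellation $n-S\rr{r-2}-x\rr{r-1}=N_r$, which makes $\beta_r=O(x\rr{r-1}/n)$ rather than $O(S\rr{r-1}/n)$, is the indispensable ingredient: without it the sum $\sum_r\beta_r(\lambda_r-k_r)$ would behave like $R\cdot(S/n)\cdot n^q\log n$, which fails to vanish in the regime $q<1/3$. All remaining steps are routine calculation once this observation is in place and the four-term error decomposition has been identified.
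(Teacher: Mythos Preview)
Your proof is correct and follows essentially the same strategy as the paper: Markov factorisation over rounds, a per-round lower bound comparing the Binomial mass to the Poisson mass, and accumulation of errors using $x\rr{s}\leq n^q\log n$ and $R=\lfloor n^{1-2q}\log^{-3}n\rfloor$. The only difference is bookkeeping: the paper bounds the Binomial pmf directly via $\binom{N}{k}\geq (N-k)^k/k!$ and $(1-x/y)^{y}\geq e^{-x}e^{-x^2/y}$, arriving at a single multiplicative correction $\big((1-4n^{q-1}\log n)^{n^{1-q}/\log n}\big)^{2/\log n}\exp(-2/\log n)$, whereas you form the exact ratio, take logarithms, and decompose into four error sums. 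Both routes hinge on the same algebraic cancellation $n-S\rr{r-2}-x\rr{r-1}=N_r$, which you correctly flag as the essential point.
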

\begin{proof}
    We will make use of the below inequality which holds for $y>x>0$ and $z>0$:
    $$\left(1-\frac{x}{y}\right)^{y-z}\geq \left(1-\frac{x}{y}\right)^y=\exp\left(y\log\left(1-\frac{x}{y}\right)\right)\geq \exp\left(y\Big(-\frac{x}{y}-\frac{x^2}{y^2}\Big)\right)=\exp\left(-x\right)\exp\left(-\frac{x^2}{y}\right).$$
    Let $s\rr{-2}=0$ and $s\rr{r}=s\rr{r-1}+x\rr{r}$ for $r\in[-1,R]$. The elements $x\rr{-1},\dotsc,x\rr{R}$ are chosen such that $s\rr{r}\leq \tfrac{n}{2}$ for all $r\in[-1,R]$. Also, note that the definition of $p_1$ in Lemma \ref{lemma technical dist of M} implies that
    $$\frac{(n-f)f}{n^2}\leq p_1(n,f)\leq \frac{f}{n},\quad (n\geq f\geq 0).$$ 
    Using the above inequalities along with Lemma \ref{lemma technical dist of M} we now get
    \begin{equation*}
        \begin{aligned}
        &\mathbb{P}\big(M_1\rr{r}=x\rr{r},\;r\in[R]\big|M_1\rr{-1}=x\rr{-1},\,M_1\rr{0}=x\rr{0}\big)\\
        =&\prod_{r=1}^R\mathbb{P}\big(M_1\rr{r}=x\rr{r}\big|M_1\rr{s}=x\rr{s},\; s\in[-1,r-1]\big)\\
        =&\prod_{r=1}^R\binom{n-s\rr{r-1}}{x\rr{r}}\big[p_1(n-s\rr{r-2},\,x\rr{r-1})\big]^{x\rr{r}}\big[1-p_1(n-s\rr{r-2},\,x\rr{r-1})\big]^{n-{s\rr{r-1}}-x\rr{r}}\\
        \geq &\prod_{r=1}^R\frac{(n-s\rr{r-1}-x\rr{r})^{x\rr{r}}}{x\rr{r}!}\left[ \frac{(n-s\rr{r-1})x\rr{r-1}}{(n-s\rr{r-2})^2}\right]^{x\rr{r}}\left[1-\frac{x\rr{r-1}}{n-s\rr{r-2}}\right]^{n-s\rr{r-2}-(x\rr{r-1}+x\rr{r})}\\
        \geq & \prod_{r=1}^R\left[\frac{n-s\rr{r-2}-(x\rr{r-1}+x\rr{r})}{n-s\rr{r-2}}\right]^{2x\rr r}\exp\left(-\frac{(x\rr{r-1})^2}{n-s\rr{r-2}}\right)\frac{e^{-x\rr{r-1}}(x\rr{r-1})^{x\rr r}}{x\rr r!}\\
        \geq &\prod_{r=1}^R\left(1-\frac{2n^q\log{n}}{n/2}\right)^{2n^q\log{n}}\exp\left(-\frac{n^{2q}\log^2{n}}{n/2}\right)\mathbb{P}\big(X\rr{r}=x\rr{r}|X\rr{r-1}=x\rr{r-1}\big)\\
        \geq & \bigg(\bigg(1-\frac{4}{\frac{n^{1-q}}{\log{n}}}\bigg)^{\frac{n^{1-q}}{\log{n}}}\bigg)^{\frac{2}{\log{n}}}\exp\left(-\frac{2}{\log{n}}\right)\mathbb{P}\big(X\rr r=x\rr r,\; r\in[R]\big|X\rr{-1}=x\rr{-1},X\rr 0=x\rr 0\big).
        \end{aligned}
    \end{equation*}
    And as
    $$ \bigg(\bigg(1-\frac{4}{\frac{n^{1-q}}{\log{n}}}\bigg)^{\frac{n^{1-q}}{\log{n}}}\bigg)^{\frac{2}{\log{n}}}\exp\left(-\frac{2}{\log{n}}\right)\rightarrow 1\quad\text{as }n\rightarrow\infty,$$
    the result follows. 
\end{proof}
Let $(X\rr{r})_{r\in[-1,R]}$ be the sequence of random variables from the above lemma. 
    Note that
    \begin{align*}
        \mathbb{P}\big(M_1\rr{R}=0\big)&\geq \mathbb{P}\big(M_1\rr{R}=0,\,M_1\rr{r}\leq n^q\log{n},\; r\in[-1,R]\big)\\
        &=\sum_{x\rr{-1}=0}^{\lfloor n^q\log{n}\rfloor }\dotsb\sum_{x\rr{R-1}=0}^{\lfloor n^q\log{n}\rfloor }\mathbb{P}\big(M_1\rr{-1}=x\rr{-1},\dotsc,M_1\rr{R-1}=x\rr{R-1},\,M_1\rr{R}=0\big),
    \end{align*}

and using Lemma \ref{lemma Poisson approximation sequence} and letting $x\rr{R}=0$ each summand can be upper bounded by
    \begin{align*}
        &\mathbb{P}\big(M_1\rr{-1}=x\rr{-1},\dotsc,M_1\rr{R}=x\rr{R}\big)\\
        =&\mathbb{P}\big(M_1\rr{r}=x\rr{r},\;r\in[R]\big|M_1\rr{-1}=x\rr{-1},M_1\rr{0}=x\rr{0}\big)\mathbb{P}\big(M_1\rr{-1}=x\rr{-1},M_1\rr{0}=x\rr{0}\big)\\
        \geq&\mathbb{P}\big(X\rr{r}=x\rr{r},\;r\in[R]\big|X\rr{-1}=x\rr{-1},X\rr{0}=x\rr{0}\big)\mathbb{P}\big(X\rr{-1}=x\rr{-1},X\rr{0}=x\rr{0}\big) E(n)\\
        =&\mathbb{P}\big(X\rr{-1}=x\rr{-1},\dotsc,X\rr{R}=x\rr{R}\big)E(n).
    \end{align*}
    Inserting this lower bound in the sum gives that
    \begin{equation} \label{eq M_1^R=0 if}
        \mathbb{P}\big(M_1\rr{R}=0\big)\geq E(n) \mathbb{P}\big(X\rr R=0,\;X\rr r\leq n^q\log{n},\; r\in[-1,R-1]\big),
    \end{equation}
    where $E$ is the function from Lemma \ref{lemma Poisson approximation sequence}. To establish our result we thus only need the two lemmas below
\begin{lemma} \label{lemma M_1^R=0 (1)}
        We have that
        $$\mathbb{P}\big(X\rr{r}\leq  n^q\log{n},\;r\in[-1,R-1]\big)\rightarrow 1\quad\text{as }n\rightarrow\infty.$$
    \end{lemma}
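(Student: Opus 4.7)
The plan is to exploit that $(X\rr{r})_{r\geq 0}$ is a non-negative martingale, since the Poisson offspring distribution has mean one: $\mathbb{E}[X\rr{r}\mid X\rr{r-1}]=X\rr{r-1}$. This is essentially the same martingale idea used in the proof of Lemma \ref{lemma five limits} (1) for $(M_1\rr{r})$, only now the drift is exactly zero rather than non-positive, which makes the argument even cleaner.

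The hypothesis of Lemma \ref{lemma under-constrained alternative result} already gives $\mathbb{P}(X\rr{-1}\leq n^q\log{n})\to 1$, so it suffices to control the bound for $r\in[0,R-1]$. To this end I would introduce the stopping time $\tau=\inf\{r\geq 0:X\rr{r}>n^q\log{n}\}$ with respect to the natural filtration of $(X\rr{r})$, and apply the optional sampling theorem to the bounded stopping time $\tau\wedge(R-1)$. Combined with the initial-moment bound $\mathbb{E}[X\rr{0}]=\mathbb{E}[M_1\rr{0}]\leq C_0 n^q$ from Lemma \ref{lemma under-constrained alternative result}, this yields
$$\mathbb{E}\big[X\rr{\tau\wedge(R-1)}\big]=\mathbb{E}[X\rr{0}]\leq C_0 n^q.$$
On $\{\tau\leq R-1\}$ we have $X\rr{\tau}>n^q\log{n}$, so a Markov-type estimate gives $\mathbb{P}(\tau\leq R-1)\leq C_0/\log{n}\to 0$, which is precisely the required bound on the indices $r\in[0,R-1]$, and combining with the $r=-1$ case yields the claim.

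I do not foresee any real obstacle: $\tau\wedge(R-1)$ is bounded and the martingale is integrable, so optional sampling applies without any subtlety. The $\log{n}$ slack in the threshold $n^q\log{n}$ is precisely what makes the Markov-type estimate give a vanishing bound, using only the moment hypothesis $\mathbb{E}[M_1\rr{0}]\leq C_0 n^q$ rather than any finer tail control on $M_1\rr{0}$.
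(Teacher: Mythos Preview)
Your proposal is correct and follows essentially the same approach as the paper: both exploit that $(X\rr{r})$ is a non-negative martingale, apply optional sampling to a first-passage time above $n^q\log n$, and use the moment bound $\mathbb{E}[X\rr{0}]\leq C_0 n^q$ to get $\mathbb{P}(\tau\leq R-1)\leq C_0/\log n$. The only cosmetic differences are that the paper also includes the absorbing state $0$ in its stopping time and extends the sequence beyond $R$ (invoking the non-negative supermartingale form of optional sampling), whereas your bounded stopping time $\tau\wedge(R-1)$ makes the application of optional sampling completely elementary.
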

    \begin{lemma} \label{lemma M_1^R=0 (2)}
        We have that 
        $$\mathbb{P}\big(X\rr{R}=0\big)\rightarrow 1\quad\text{as }n\rightarrow\infty.$$
    \end{lemma}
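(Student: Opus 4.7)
The plan is to recognize $(X^{(r)})_{r\geq 0}$ as a classical critical Galton--Watson process and apply Kolmogorov's theorem on survival probabilities. First, I would observe that if $Y_1,\dots,Y_k$ are i.i.d.\ Poisson$(1)$, then $\sum_{i=1}^k Y_i\sim\text{Poisson}(k)$. Consequently, conditional on $X^{(0)}$, the sequence $(X^{(r)})_{r\geq 0}$ has the same distribution as the generation sizes of a Galton--Watson process with Poisson$(1)$ offspring distribution started from $X^{(0)}$ independent ancestors. Since Poisson$(1)$ has mean $1$ and variance $\sigma^2=1$, this is a critical Galton--Watson tree.

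Next, I would invoke Kolmogorov's theorem (see, e.g., Athreya--Ney): if $(Z_r)_{r\geq 0}$ is a critical Galton--Watson process with offspring variance $\sigma^2$ started from a single ancestor, then $r\,\mathbb{P}(Z_r>0)\to 2/\sigma^2$ as $r\to\infty$. In our setting $\sigma^2=1$, so there exists a constant $C>0$ such that for all sufficiently large $r$,
\begin{equation*}
\mathbb{P}(Z_r>0\mid Z_0=1)\leq \frac{C}{r}.
\end{equation*}
Starting from $X^{(0)}$ independent ancestors and applying a union bound over these ancestors, I would obtain
\begin{equation*}
\mathbb{P}\big(X^{(R)}>0\,\big|\,X^{(0)}\big)\leq X^{(0)}\cdot\mathbb{P}(Z_R>0\mid Z_0=1)\leq \frac{CX^{(0)}}{R}.
\end{equation*}

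Taking expectations and using the hypothesis $\mathbb{E}[M_1^{(0)}]=\mathbb{E}[X^{(0)}]\leq C_0 n^q$ from Lemma \ref{lemma under-constrained alternative result}, together with $R=\lfloor n^{1-2q}\log^{-3}{n}\rfloor$, yields
\begin{equation*}
\mathbb{P}\big(X^{(R)}>0\big)\leq \frac{CC_0 n^q}{R}\leq 2CC_0\, n^{3q-1}\log^{3}{n},
\end{equation*}
for $n$ large enough. Since $q<1/3$, the exponent $3q-1$ is strictly negative, so the right-hand side tends to zero, proving $\mathbb{P}(X^{(R)}=0)\to 1$.

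The main (and essentially only) non-routine ingredient is invoking Kolmogorov's asymptotic for the survival probability of a critical Galton--Watson process; everything else is a union bound and the bookkeeping tying $R$ and $q$ together. An alternative, self-contained route would be to track the generating function $f_r$ of $Z_r$ via $f_r=f\circ f_{r-1}$ with $f(s)=e^{s-1}$ and derive the $2/R$ asymptotic by hand from the standard differential inequality $1/(1-f_r(0))-1/(1-f_{r-1}(0))\to \sigma^2/2$, but citing Kolmogorov's theorem keeps the argument short.
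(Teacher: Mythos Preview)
Your proposal is correct and follows essentially the same approach as the paper: identify $(X^{(r)})$ as a critical Galton--Watson process with Poisson$(1)$ offspring, use the Kolmogorov-type survival bound $\mathbb{P}(Z_R>0\mid Z_0=1)\leq C/R$, and combine it with $\mathbb{E}[X^{(0)}]\leq C_0 n^q$ and $R\sim n^{1-2q}\log^{-3}n$. The only cosmetic difference is that the paper passes through the extinction probability $(1-C/R)^{X^{(0)}}$ and then applies Jensen's inequality, whereas you use the equivalent union bound $1-(1-p)^{X^{(0)}}\leq X^{(0)}p$ before taking expectations.
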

\begin{proof}[Proof of Lemma \ref{lemma M_1^R=0 (1)}.] Let $X\rr{r}:=X\rr{R}$ for $r>R$ and also define the $\sigma$-algebras $\mathcal{F}\rr{r}=\sigma\big(X\rr{-1},\dotsc,X\rr{r}\big)$ for $r\geq -1$. Then for each $r\geq -1$ we have
$$\mathbb{E}[X\rr{r}|\mathcal{F}\rr{r-1}]=X\rr{r-1},$$
why $(X\rr{r})_{r\geq -1}$ is a martingale w.r.t. the filtration $(\mathcal{F}\rr{r})_{r\geq -1}$. As it is non-negative, we can make use of optional sampling (Thm. 28, Chapter V of \cite{Dellacherie11}). Consider the stopping time
$$\tau=\min\{r\in\mathbb{N}_0\,:\,X\rr{r}=0\text{ or }X\rr{r}\geq n^q\log{n}\},$$
and let $C_0$ be the constant of Lemma \ref{lemma under-constrained alternative result}. Then
$$C_0n^q\geq \mathbb{E}[X\rr{0}]\geq \mathbb{E}\big[X\rr{\tau}\big]\geq n^q\log{n}\mathbb{P}\big(X\rr \tau\geq n^q\log{n}\big)\Rightarrow \mathbb{P}\big(X\rr \tau\geq n^q\log{n}\big)\leq \frac{C_0}{\log{n}}.$$
As $0$ is an absorbing state this implies that
$$\mathbb{P}\big(X\rr{r}\leq n^q\log{n},\;r\in[-1,R]\big)\rightarrow 1\quad\text{as }n\rightarrow\infty,$$
which was the claim.
\end{proof}

\begin{proof}[Proof of Lemma \ref{lemma M_1^R=0 (2)}.]
    Note that the distribution of $(X\rr{r})_{r\in[R]}$ has the same law as a critical Galton-Watson tree with offspring distribution $Poisson(1)$ cut off at depth $R$, see Chapter 1 in \cite{Athreya04}. Thus, using standard results for such processes (see e.g. Thm. 1 in section 1.9 of \cite{Athreya04}), there exists a constant $C>0$ such that 
    $$\mathbb{P}(X\rr{R}=0)\geq \mathbb{E}\bigg[\Big(1-\frac{C}{R}\Big)^{X\rr{0}}\bigg]\geq \Big(1-\frac{C}{n^{1-2q}\log^{-3}{n}}\Big)^{\mathbb{E}[X\rr{0}]}\geq \Big(1-\frac{C n^{3q-1}\log^3{n}}{n^{q}}\Big)^{C_0n^q}\rightarrow 1,\quad\text{as }n\rightarrow\infty,$$ 
    where $C_0$ is the constant of Lemma \ref{lemma under-constrained alternative result} and Jensen's inequality is also used. Thus, the result is established. 
\end{proof}

\subsection{Establishing Definition \ref{def degrees of freedom} (1) using Lemma \ref{lemma under-constrained alternative result}} \label{section proof 4}
We will now couple Lemma \ref{lemma under-constrained alternative result} to our main result in the regime $q<1/3$. We will do this by closely controlling the first couple of rounds in the unit-propagation algorithm. Thus, we will once again repeat the notation used when going through this procedure. However, as this section uses none of the defined elements from the other sections this will not be a problem. 

Let $\Phi\sim F_2(n,n)$ and let $\mathcal{L}\subseteq\pm[n]$ be a consistent set of literals with $|\mathcal{L}|=\lfloor n^q\rfloor$. We need to show that $\liminf_{n\rightarrow\infty}\mathbb{P}(\Phi_\mathcal{L}\inSAT)=\liminf_{n\rightarrow\infty}\mathbb{P}(\Phi\inSAT)$. Let $G$ be the function of Lemma \ref{lemma technical G} and define \begin{align*}
    \Psi_2\rr{0}:=G(&\Phi,\mathcal{L}),\quad N_1\rr{0}=\lfloor n^q\rfloor,\quad T\rr{-1}:=0,\\
    T\rr{0}:=\lfloor n^q\rfloor,&\quad \mathcal{L}\rr{0}:=[n]\backslash[n-T\rr{0}],\quad \mathcal{N}\rr{0}=\{\emptyset,\Omega\}.
\end{align*}
Note that $\Psi_2\rr{0}\sim F_2(n,n)$ and also 
\begin{equation} \label{eq last decomp 1}
    \big\{\Phi_\mathcal{L}\inSAT\big\}=\big\{(\Psi_2\rr{0})_{\mathcal{L}\rr{0}}\inSAT\big\}.
\end{equation}
Unlike previously we now only repeat the unit-propagation procedure twice. Thus, recursively for $r=1,2$ define the following:

Let $G_1$ and $G_2$ be the functions of Lemma \ref{lemma technical dist of M} and define $\Phi_k\rr{r}=G_k(\Psi_k\rr{r-1},\mathcal{L}\rr{r-1})$. Let $N_k\rr{r}$ be the number of clauses in $\Phi_k\rr{r}$ for $k\in\{1,2\}$ and let $N_0\rr{r}$ and $N_\star\rr{r}$ be the number of unsatisfied- and satisfied clauses of $(\Psi_2\rr{r-1})_{\mathcal{L}\rr{r-1}}$, respectively. Define the $\sigma$-algebra $\mathcal{N}\rr{r}=\sigma\big(\mathcal{N}\rr{r-1}\cup\sigma(N_k\rr{r}:k\in K)\big)$. The elements are constructed such that
\begin{equation} \label{eq last decomp 2}
        \begin{aligned}
            \big\{({\Psi}_2\rr{r-1})_{\mathcal{L}\rr{r-1}}\inSAT\big\}=\big\{({\Phi}_2\rr{r})_{{\Phi}_1\rr{r}}\inSAT,\;{\Phi}_1\rr{r}\inSAT,\;N_0\rr{r}=0\big\}.
        \end{aligned}
    \end{equation}
and also 
\begin{align}
    (N_k\rr{r})_{k\in K}|\mathcal{N}\rr{r-1}&\sim\text{Multinomial}\Big(n-T\rr{r-2},\;{p}\big(n-T\rr{r-2},\,N_1\rr{r-1}\big)\Big),\label{eq N_k^r dist}\\
    {\Phi}_k\rr{r}|\mathcal{N}\rr{r}&\sim F_k(n-T\rr{r-1},\;N_k\rr{r}),\quad (k\in\{1,2\}),\notag
\end{align}
and $\Phi_1\rr{r}$ and $\Phi_2\rr{r}$ are conditionally independent. Let $\bar{N}_1\rr{r}$ be the number of distinct variables appearing in $\Phi_1\rr{r}$ and define further
$$T\rr{r}:=T\rr{r-1}+N_1\rr{r},\quad \bar{\mathcal{L}}\rr{r}:=[n-T\rr{r-1}]\backslash[n-T\rr{r-1}-\bar{N}_1\rr{r}],\quad \mathcal{L}\rr{r}:=[n-T\rr{r-1}]\backslash[n-T\rr{r}].$$ 
Also, let $\Psi_2\rr{r}:=G(\Phi_2\rr{r},\;\mathcal{L}(\Phi_1\rr{r}))$, where again $G$ is defined in Lemma \ref{lemma technical G}. Then using Lemma \ref{lemma technical dist of M} we see
\begin{align}
    \Psi_2\rr{r}|\mathcal{N}\rr{r}&\sim F_2\big(n-T\rr{r-1},\;N_2\rr{r}\big), \notag\\
    \big\{(\Phi_2\rr{r})_{\Phi_1\rr{r}}\inSAT\big\}&=\big\{(\Psi_2\rr{r})_{\bar{\mathcal{L}}\rr{r}}\inSAT\big\}\supseteq \big\{(\Psi_2\rr{r})_{{\mathcal{L}}\rr{r}}\inSAT\big\}, \label{eq last decomp 3}
\end{align}
where we lastly used that $\bar{\mathcal{L}}\rr{r}\subseteq\mathcal{L}\rr{r}$. Now, we are in the same setup as initially and our recursive step has ended. Combining (\ref{eq last decomp 1}), (\ref{eq last decomp 2}), and (\ref{eq last decomp 3}), we now see
\begin{equation} \label{eq decomposition lower bound real case}
    \mathbb{P}\big(\Phi_\mathcal{L}\inSAT\big)\geq \mathbb{P}\big((\Psi_2\rr{2})_{\mathcal{L}\rr{2}}\inSAT,\;\Phi_1\rr{2}\inSAT,\;\Phi_1\rr{1}\inSAT,\;N_0\rr{2}=0,\;N_0\rr{1}=0\big).
\end{equation}
The above equation implies that it is sufficient to lower bound the right-hand side of the above expression to establish our main theorem in the case $q<1/3$. We do this by proving the below lemmas.
    \begin{lemma} \label{lemma final lower 2}
        We have
    $$\lim_{n\rightarrow\infty}\mathbb{P}\big(N_0\rr{1}=0\big)=\lim_{n\rightarrow\infty}\mathbb{P}\big(N_0\rr{2}=0\big)=1.$$
    and
    $$\lim_{n\rightarrow\infty}\mathbb{P}\big(\Phi_1\rr{1}\inSAT\big)=\lim_{n\rightarrow\infty}\mathbb{P}\big(\Phi_1\rr{2}\inSAT\big)=1.$$
    \end{lemma}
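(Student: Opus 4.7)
The plan is to establish all four limits by a routine combination of Markov's inequality on the conditional multinomial means from Lemma \ref{lemma technical dist of M} with the 1-SAT lower bound in Lemma \ref{lemma 1-SAT}. The assumption $q<1/3<1/2$ is the only ingredient that drives the vanishing of every error term, and because only two rounds of unit propagation are performed in this section (as opposed to $R$ rounds in Section \ref{Decomposition in under-constrained regime}), no super-martingale, Poisson approximation, or Galton--Watson machinery is needed here.

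First I would show that $N_1\rr{1}\leq n^q\log n$ and $N_1\rr{2}\leq n^q\log^2 n$ with probability tending to $1$. Using \eqref{eq N_k^r dist} and the explicit form of $p_1$, one gets $\mathbb{E}[N_1\rr{1}]\leq n\,p_1(n,\lfloor n^q\rfloor)\leq 2n^q$, so Markov's inequality yields the first bound. Conditioning on $\mathcal{N}\rr{1}$ and applying the same estimate at level $r=2$ gives $\mathbb{E}[N_1\rr{2}\mid\mathcal{N}\rr{1}]\leq 2N_1\rr{1}$ for large $n$; taking unconditional expectation and again applying Markov produces the second bound. As a byproduct, $T\rr{r-1}=\lfloor n^q\rfloor+\sum_{s<r}N_1\rr{s}\leq n/2$ on these events for $r=1,2$.

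Next, for the unsatisfied-clauses statements, the same tools give $\mathbb{E}[N_0\rr{1}]\leq n\,p_0(n,\lfloor n^q\rfloor)=O(n^{2q-1})\to 0$, and on the event $\{N_1\rr{1}\leq n^q\log n\}$ we have $\mathbb{E}[N_0\rr{2}\mid\mathcal{N}\rr{1}]\leq n\,p_0(n-T\rr{0},N_1\rr{1})=O(n^{2q-1}\log^2 n)\to 0$. Markov together with the law of total probability then yields $\mathbb{P}(N_0\rr{1}=0)\to 1$ and $\mathbb{P}(N_0\rr{2}=0)\to 1$. For the 1-SAT satisfiabilities, recall that $\Phi_1\rr{r}\mid\mathcal{N}\rr{r}\sim F_1(n-T\rr{r-1},N_1\rr{r})$, so Lemma \ref{lemma 1-SAT} gives
$$\mathbb{P}\bigl(\Phi_1\rr{r}\inSAT\bigm|\mathcal{N}\rr{r}\bigr)\geq \bigl(1-\tfrac{N_1\rr{r}}{n-T\rr{r-1}}\bigr)^{N_1\rr{r}},$$
which on the event $\{N_1\rr{r}\leq n^q\log^2 n\}\cap\{T\rr{r-1}\leq n/2\}$ is at least $(1-2n^{q-1}\log^2 n)^{n^q\log^2 n}$. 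Taking logarithms shows that this quantity tends to $1$ since $n^{2q-1}\log^4 n\to 0$.

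There is no genuine obstacle in this proof; the only care required is to bookkeep the conditioning at each step so that the multinomial distribution from Lemma \ref{lemma technical dist of M} can be invoked with its exact parameters, and then combine the conditional statements with the high-probability bounds on $N_1\rr{1}$ and $N_1\rr{2}$ established in the first step. This is why the entire argument reduces to four short Markov/monotonicity estimates rather than the more delicate martingale analysis used for the $R$-round sequence in Lemma \ref{lemma five limits}.
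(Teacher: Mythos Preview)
Your proposal is correct and follows essentially the same route as the paper: both arguments bound $N_1\rr{1}$ and $N_1\rr{2}$ (the paper via Lemma~\ref{lemma final lower 1}, you via a direct Markov estimate), then handle $N_0\rr{r}=0$ through the smallness of $p_0$ and handle $\Phi_1\rr{r}\inSAT$ via Lemma~\ref{lemma 1-SAT} on the high-probability event. The only cosmetic differences are that the paper computes $\mathbb{P}(N_0\rr{r}=0)$ directly as $(1-p_0)^{N_2}$ rather than bounding $\mathbb{E}[N_0\rr{r}]$, and uses the cutoff $\tfrac{1}{2}n^q\log n$ instead of your $n^q\log^2 n$; neither change affects the argument.
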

    \begin{lemma} \label{lemma final lower 3}
    We have
    $$\liminf_{n\rightarrow\infty}\mathbb{P}\big((\Psi_2\rr{2})_{\mathcal{L}\rr{2}}\inSAT\big)\geq \liminf_{n\rightarrow\infty}\mathbb{P}\big(\Phi\inSAT\big).$$
    \end{lemma}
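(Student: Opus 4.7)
The plan is to apply Lemma \ref{lemma under-constrained alternative result} with $M_1\rr{-1}:=T\rr{1}$ and $M_1\rr{0}:=N_1\rr{2}$, so that the lemma's set $\mathcal{L}'=[n-T\rr{1}]\setminus[n-T\rr{2}]$ matches our $\mathcal{L}\rr{2}$. The single obstruction is that the lemma requires an input formula $\Phi'$ with exactly $n-T\rr{2}$ clauses, whereas $\Psi_2\rr{2}$ has $N_2\rr{2}$ clauses, which is a different random quantity. I will build an augmentation $\tilde\Phi$ of $\Psi_2\rr{2}$ that has the required distribution and, on a high-probability event $A$, includes every clause of $\Psi_2\rr{2}$.

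First I would set $A:=\{N_2\rr{2}\leq n-T\rr{2}\}$ and define $\tilde\Phi$ as follows. On $A$, take $\tilde\Phi$ to be $\Psi_2\rr{2}$ together with $(n-T\rr{2})-N_2\rr{2}$ extra clauses drawn, conditional on $\mathcal{N}\rr{2}$, i.i.d.\ uniformly on $\mathcal{D}\rr{2}=\{(\ell_1,\ell_2)\in(\pm[n-T\rr{1}])^2:|\ell_1|<|\ell_2|\}$ and independently of $\Psi_2\rr{2}$. On $A^c$, resample $\tilde\Phi$ freshly from $F_2(n-T\rr{1},n-T\rr{2})$ given $\mathcal{N}\rr{2}$. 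In either case $\tilde\Phi\mid\mathcal{N}\rr{2}\sim F_2(n-T\rr{1},n-T\rr{2})$, and since this distribution depends only on $T\rr{1}$ and $N_1\rr{2}$, the tower property gives $\tilde\Phi\mid\sigma(T\rr{1},N_1\rr{2})\sim F_2(n-T\rr{1},n-T\rr{2})$, matching the distributional hypothesis of Lemma \ref{lemma under-constrained alternative result}. The remaining hypotheses $\E[N_1\rr{2}]\leq C_0 n^q$ and $\mathbb{P}(T\rr{1}\leq n^q\log n),\mathbb{P}(N_1\rr{2}\leq n^q\log n)\to 1$ are routine from the super-martingale-type mean bound on $\E[N_1\rr{r}]$ (as in Lemma \ref{lemma mean and var bounds}\,(1)) together with Markov's inequality.

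The main technical point will be showing $\mathbb{P}(A)\to 1$. Using that the four multinomial counts at round $r$ sum to the number of clauses of $\Psi_2\rr{r-1}$, one rearranges $A$ to
$$\lfloor n^q\rfloor \leq N_0\rr{1}+N_\star\rr{1}+N_0\rr{2}+N_\star\rr{2}.$$
From the formulas in Lemma \ref{lemma technical dist of M}, $\E[N_\star\rr{1}]\sim\lfloor n^q\rfloor$ with conditional variance of order $n^q$, while $\E[N_\star\rr{2}\mid\mathcal{N}\rr{1}]\sim N_1\rr{1}$, itself of order $n^q$ with high probability. Thus the positive contribution from $N_\star\rr{2}$ of order $n^q$ dominates the $O(n^{q/2})$ two-sided fluctuation of $\lfloor n^q\rfloor-N_\star\rr{1}$, and two Chebyshev estimates give $\mathbb{P}(A)\to 1$. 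This concentration step is where I expect the calculation to be the most delicate.

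With $\mathbb{P}(A)\to 1$ in hand, the conclusion follows quickly. On $A$ every clause of $\Psi_2\rr{2}$ is a clause of $\tilde\Phi$, so any assignment satisfying $\tilde\Phi_{\mathcal{L}\rr{2}}$ also satisfies $(\Psi_2\rr{2})_{\mathcal{L}\rr{2}}$, whence
$$\mathbb{P}\big((\Psi_2\rr{2})_{\mathcal{L}\rr{2}}\inSAT\big)\geq \mathbb{P}\big(\tilde\Phi_{\mathcal{L}\rr{2}}\inSAT\big)-\mathbb{P}(A^c).$$
Taking $\liminf$ and using Lemma \ref{lemma under-constrained alternative result} to bound the first term, together with $\mathbb{P}(A^c)\to 0$, yields the claim.
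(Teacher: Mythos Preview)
Your proposal is correct and follows essentially the same route as the paper: both augment $\Psi_2\rr{2}$ to a formula with exactly $n-T\rr{2}$ clauses, show via the identity $N_2\rr{2}=n-\sum_{k\in\{0,1,\star\},\,r\in\{1,2\}}N_k\rr{r}$ that the augmentation event $\{N_2\rr{2}\leq n-T\rr{2}\}$ holds w.h.p.\ (the paper packages the needed concentration as Lemma~\ref{lemma final lower 1}, whereas you sketch the Chebyshev argument directly), and then apply Lemma~\ref{lemma under-constrained alternative result} with $M_1\rr{-1}=T\rr{1}$ and $M_1\rr{0}=N_1\rr{2}$. Your pointer to Lemma~\ref{lemma mean and var bounds}\,(1) is slightly off since that lemma treats a different sequence in the $q>1/3$ regime, but the same supermartingale mean bound indeed works here, and the paper carries it out in Lemma~\ref{lemma final lower 1}.
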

These lemmas will imply our main theorem when $q<1/3$
\begin{proof}[Proof of Definition \ref{def degrees of freedom} (1)]
    As $\{\Phi_\mathcal{L}\inSAT\}\subseteq \{\Phi\inSAT\}$ this implies that
    $$\liminf_{n\rightarrow\infty}\mathbb{P}\big(\Phi_\mathcal{L}\inSAT\big)\leq \liminf_{n\rightarrow\infty}\mathbb{P}\big(\Phi\inSAT\big).$$
    On the other hand, equation (\ref{eq decomposition lower bound real case}) along with Lemma \ref{lemma final lower 2} and Lemma \ref{lemma final lower 3} gives
    \begin{align*}
        &\liminf_{n\rightarrow\infty}\mathbb{P}\big(\Phi_\mathcal{L}\inSAT\big)\\
        \geq&\liminf_{n\rightarrow\infty}\mathbb{P}\big((\Psi_2\rr{2})_{\mathcal{L}\rr{2}}\inSAT,\;\Phi_1\rr{2}\inSAT,\;\Phi_1\rr{1}\inSAT,\;N_0\rr{2}=0,\;N_0\rr{1}=0\big)\\
        \geq& \liminf_{n\rightarrow\infty}\mathbb{P}\big(\Phi\inSAT\big).
    \end{align*}
    Combining the above implies that the two limit infimum coincide. 
    
\end{proof}
To prove our main theorem it thus suffices to establish Lemma \ref{lemma final lower 2} and \ref{lemma final lower 3}. To do so we need the following technical result.
\begin{lemma} \label{lemma final lower 1}
         There exists a constant $C_0>0$ such that $\mathbb{E}[N_k\rr{r}]\leq C_0n^q$ for $r\in\{1,2\}$ and $k\in\{1,\star\}$. Furthermore, 
         $$\lim_{n\rightarrow\infty}\mathbb{P}\big(N_k\rr{r}\leq \tfrac{1}{2}n^q\log{n}\big)=1,\quad k\in\{1,\star\},\;r\in\{1,2\},$$
         and
         $$\lim_{n\rightarrow\infty}\mathbb{P}\big(N_\star\rr{1}+N_\star\rr{2}\geq n^q\big)=1.$$
    \end{lemma}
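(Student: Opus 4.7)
The plan is to derive the mean bounds (1) and the Markov tails (2) directly from the multinomial identity \eqref{eq N_k^r dist}, and then to establish the sum estimate (3) via Chebyshev's inequality after a careful second moment calculation. The main obstacle is (3), because each individual $\E[N_\star^{(r)}]$ is only slightly below $n^q$, so crossing the threshold $n^q$ in the sum requires both rounds to contribute together with sharp enough fluctuation control.

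For the expectation bounds I would first observe that in round $1$ the vector $(N_k^{(1)})_{k\in K}$ is multinomial with parameters $(n,\,p(n,\lfloor n^q\rfloor))$, and the explicit formulas for $p_1$ and $p_\star$ from Lemma \ref{lemma technical dist of M} give $\E[N_1^{(1)}] = \lfloor n^q\rfloor - O(n^{2q-1})$ and $\E[N_\star^{(1)}] = \lfloor n^q\rfloor - O(n^{2q-1})$, both $\le n^q$. For round $2$, conditioning on $\mathcal{N}^{(1)}$ and applying \eqref{eq N_k^r dist} again, the identities $p_k(n-\lfloor n^q\rfloor,N_1^{(1)}) \le N_1^{(1)}/(n-\lfloor n^q\rfloor-1)$ for $k\in\{1,\star\}$ yield $\E[N_k^{(2)}\mid\mathcal{N}^{(1)}] \le N_1^{(1)}$, so after iterated expectation $\E[N_k^{(2)}] \le \E[N_1^{(1)}] \le n^q$. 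A single constant $C_0>0$ thus works for all four pairs, proving (1). Markov's inequality then proves (2) with room to spare:
\[
\Prob\bigl(N_k^{(r)} > \tfrac{1}{2} n^q \log n\bigr) \le \frac{2C_0}{\log n} \to 0.
\]

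For (3) the plan is first to sharpen the expectation $\E[N_\star^{(1)}+N_\star^{(2)}]$ and then to control the variance. The iterated formula $\E[N_\star^{(2)}] = \E\bigl[(n-\lfloor n^q\rfloor)\,p_\star(n-\lfloor n^q\rfloor, N_1^{(1)})\bigr]$ combined with the fact that the ratio $(n-\lfloor n^q\rfloor)p_\star(n-\lfloor n^q\rfloor, N_1^{(1)})/N_1^{(1)}$ differs from $1$ by $O(N_1^{(1)}/n)$ gives $\E[N_\star^{(2)}] \ge \E[N_1^{(1)}] - O(n^{2q-1}) = \lfloor n^q\rfloor - o(n^q)$. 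Therefore $\E[N_\star^{(1)}+N_\star^{(2)}] \ge (2-o(1))n^q$, which is at least $\tfrac{3}{2} n^q$ for $n$ large. Next I would bound $\mathbb{V}[N_\star^{(1)}] \le \E[N_\star^{(1)}] = O(n^q)$ directly from the binomial marginal, and use the law of total variance to write $\mathbb{V}[N_\star^{(2)}] \le \E\bigl[\E[N_\star^{(2)}\mid\mathcal{N}^{(1)}]\bigr] + \mathbb{V}\bigl[\E[N_\star^{(2)}\mid\mathcal{N}^{(1)}]\bigr] = O(n^q)$, the first term bounded by the conditional binomial mean and the second by the near-linearity of the conditional mean in $N_1^{(1)}$ together with $\mathbb{V}[N_1^{(1)}] \le \E[N_1^{(1)}] = O(n^q)$. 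Cauchy--Schwarz then gives $|\mathrm{Cov}(N_\star^{(1)},N_\star^{(2)})| = O(n^q)$, so $\mathbb{V}[N_\star^{(1)}+N_\star^{(2)}] = O(n^q)$. Chebyshev's inequality closes the argument:
\[
\Prob\bigl(N_\star^{(1)}+N_\star^{(2)} < n^q\bigr) \le \Prob\bigl(\bigl|N_\star^{(1)}+N_\star^{(2)} - \E[N_\star^{(1)}+N_\star^{(2)}]\bigr| > \tfrac{1}{2} n^q\bigr) = O(n^{-q}) \to 0.
\]
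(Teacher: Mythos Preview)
Your argument for parts (1) and (2) matches the paper's exactly: compute the multinomial means from Lemma~\ref{lemma technical dist of M}, observe they are $O(n^q)$, and apply Markov's inequality.

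For part (3) you take a different and somewhat cleaner route. The paper establishes separate high-probability lower bounds on each summand: for $N_\star^{(1)}$ it uses a Chernoff bound to get $N_\star^{(1)} \ge n^q - 2n^{q_1}$ w.h.p.\ (for some $q_1 \in (q/2,q)$), and for $N_\star^{(2)}$ it computes $\E[(N_\star^{(2)})^2]$ directly, obtaining only the weaker estimate $\mathbb{V}[N_\star^{(2)}] \le Cn^{q+q_1}$, before applying Chebyshev. You instead bound $\mathbb{V}[N_\star^{(1)}+N_\star^{(2)}]$ in one stroke via the law of total variance and Cauchy--Schwarz, reaching the sharper estimate $O(n^q)$, and then apply a single Chebyshev inequality to the sum. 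Your approach avoids the Chernoff step entirely; the cost is that the ``near-linearity'' claim for $\E[N_\star^{(2)}\mid\mathcal{N}^{(1)}]$ needs one more line (the quadratic correction $O((N_1^{(1)})^2/n)$ contributes $O(n^{4q-2})=o(n^q)$ to the variance, using $q<1/3$). One minor caveat: you take the number of trials in round~$2$ to be the deterministic $n-\lfloor n^q\rfloor$, following \eqref{eq N_k^r dist} as written, whereas the paper's own proof uses $N_2^{(1)}$; since $N_2^{(1)}=n-O(n^q)$ w.h.p.\ nothing changes at this level of precision, but if you use $N_2^{(1)}$ the conditional mean also depends (mildly) on $N_2^{(1)}$ and the near-linearity argument needs the same extra line.
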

\begin{proof}
Note that $p_0(n,\,\lfloor n^q\rfloor)\leq p_1(n,\,\lfloor n^q\rfloor)\leq p_\star(n,\,\lfloor n^q\rfloor)\leq n^{q-1}$, see the definitions in Lemma \ref{lemma technical dist of M}, and thus 
    \begin{equation}
        \mathbb{E}[N_k\rr{1}]=n\cdot p_k(n,\lfloor n^q\rfloor)\leq n^q,\quad(k\in\{0,1,\star\}). \label{eq mean N_k^1}
    \end{equation}
    Using the previous observations, we get 
    $$\mathbb{E}\big[p_1(n-T\rr{0},\,N_1\rr{1})\big]\leq \mathbb{E}\big[p_\star(n-T\rr{0},N_1\rr{1})\big]\leq\frac{n^{q}}{n-n^q-2},$$ 
    and thus if we let $q_1\in(\tfrac{q}{2},\,q)$, then 
    \begin{equation} \label{eq mean N_k^2}
        \mathbb{E}[N_k\rr{2}]=\mathbb{E}\big[N_2\rr{1}\cdot p_k(n-T\rr{0},\,N_1\rr{1})\big]\leq \frac{n^{1+q}}{n-n^q-2}\leq n^q+n^{q_1},\quad (k\in\{1,\star\}),
    \end{equation}
    where we used the upper bound $N_2\rr{1}\leq n$. Note that (\ref{eq mean N_k^1}) and (\ref{eq mean N_k^2}) imply the first claim of the lemma. Furthermore, these two equations along with Markov's inequality give
    \begin{equation*}
        \mathbb{P}\big(N_k\rr{r}>\tfrac{1}{2}n^q\log{n}\big)< \frac{\mathbb{E}[N_k\rr{r}]}{\tfrac{1}{2}n^q\log{n}}\leq  \frac{n^q+n^{q_1}}{\tfrac{1}{2}n^q\log{n}}\rightarrow 0\quad\text{as }n\rightarrow\infty,\quad(k\in\{1,\star\},\;r\in\{1,2\}),
    \end{equation*}
    and this is the second claim of the lemma. Next, using (\ref{eq N_k^r dist}), (\ref{eq mean N_k^1}) and a Chernoff bound we get
    \begin{align*}
        \mathbb{P}\big(N_k\rr{1}\geq \mathbb{E}[N_k\rr{1}]+n^{q_1}\big)&\leq \exp\big(-\tfrac{1}{3}n^{2q_1-q}\big),\quad (k\in\{0,1,\star\}),\\
        \mathbb{P}\big(N_k\rr{1}\leq \mathbb{E}[N_k\rr{1}]-n^{q_1}\big)&\leq \exp\big(-\tfrac{1}{3}n^{2{q_1}-q}\big),\quad (k\in\{0,1,\star\}).
    \end{align*}
    Using this, (\ref{eq mean N_k^1}), and also that
    $\mathbb{E}[N_k\rr{1}]\geq n^q-n^{q_1}$ for $k\in\{1,\star\}$ (this is a direct consequence of the definition)
    we see that
    \begin{equation} \label{eq Chernoff on N_k^1}
        \begin{aligned}
            \mathbb{P}\big(N_k\rr{1}\geq n^q+n^{q_1}\big)&\leq \exp\big(-\tfrac{1}{3}n^{2{q_1}-q}\big),\quad (k\in\{0,1,\star\}),\\
        \mathbb{P}\big(N_1\rr{1}\leq n^q-2n^{q_1}\big)&\leq \exp\big(-\tfrac{1}{3}n^{2q_1-q}\big),\quad (k\in\{1,\star\}).
        \end{aligned}
    \end{equation}
    Next, we again use that for $X\sim \text{Binomial}(n,p)$ we have $\mathbb{E}[X^2]\leq \mathbb{E}[X]+\mathbb{E}[X^2]$, see (\ref{eq second moment of binomial}). Then we get the bound:
    \begin{equation} \label{eq mean of (N_1^r)^2}
        \begin{aligned}
            \mathbb{E}\big[(N_\star\rr{2})^2\big]&\leq \mathbb{E}\Big[\mathbb{E}[N_\star\rr{2}|\mathcal{N}\rr{1}]+\big(\mathbb{E}[N_\star\rr{2}|\mathcal{N}\rr{1}]\big)^2\Big)\\
        &=\mathbb{E}\Big[N_2\rr{1}\cdot p_\star(n-T\rr{0},\;N_1\rr{1})+\big(N_2\rr{1}\cdot p_\star(n-T\rr{0},\;N_1\rr{1})\big)^2\Big]\\
        &\leq  \frac{n}{n-n^q-2}\mathbb{E}[N_1\rr{1}]+\left(\frac{n}{n-n^q-2}\right)^2\mathbb{E}\big[(N_1\rr{1})^2\big]\\
        &\leq \frac{n}{n-n^q-2}n^q+\left(\frac{n}{n-n^q-2}\right)^2\big(\mathbb{E}[N_1\rr{1}]+(\mathbb{E}[N_1\rr{1}])^2\big)\\
        &\leq 2\left(\frac{n}{n-n^q-2}\right)^2n^q+\left(\frac{n}{n-n^q-2}\right)^4n^{2q}\\
        &\leq n^{2q}+Cn^q,
        \end{aligned}
    \end{equation}
    for a constant $C$ chosen large enough. We also want to lower bound the mean. Using that
    $$N_2\rr{2}=n-N_0\rr{2}-N_1\rr{2}-N_\star\rr{2},$$
    and that $$N_\star\rr{2}|\mathcal{N}\rr{1}\sim\text{Binomial}\big(N_2\rr{1},\,p_\star(n-T\rr{0},N_1\rr{1})\big),$$
    where $p_\star(n,l)\geq p_1(n,l)\geq \frac{(n-l)l}{n^2}$, we get that
    \begin{align*}
        &\mathbb{E}\big[ N_\star\rr{2}\;\big|\;N_1\rr{1}>n^q-2n^{q_1}\text{ and }N_k\rr{1}< n^q+n^{q_1}\text{ for }k\in\{0,1,\star\}\big]\\
        \geq & \big(n-3(n^q+n^{q_1})\big)\frac{\big(n-n^q-(n^q+n^{q_1})\big)(n^q-2n^{q_1})}{n^2}\geq n^q-Cn^{q_1},
    \end{align*}
    again for $C$ chosen large enough. This and (\ref{eq Chernoff on N_k^1}) now implies that
    \begin{align*}
        \mathbb{E}[N_\star\rr{2}]\geq& (n^q-Cn^{q_1})\mathbb{P}\big(N_1\rr{1}>n^q-2n^{q_1}\text{ and }N_k\rr{1}< n^q+n^{q_1}\text{ for }k\in\{0,1,\star\}\big)\\
        \geq &(n^q-Cn^{q_1})\Big(1-\mathbb{P}\big(N_1\rr{1}\leq n^q-2n^{q_1}\big)-\sum_{k\in\{0,1,\star\}}\mathbb{P}\big(N_k\rr{1}\geq n^q+n^{q_1}\Big)\\
        \geq &(n^q-Cn^{q_1})\big(1-4\exp\big({-\tfrac{1}{3}n^{2p_1-q}}\big)\big),
    \end{align*}
and by redefining $C$ we get that
\begin{equation} \label{eq mean N_star^2 lower bound}
    \mathbb{E}\big[N_\star\rr{2}\big]\geq n^q-Cn^{q_1}.
\end{equation}
Combining this with (\ref{eq mean of (N_1^r)^2}) we now get 
\begin{align*}
    \mathbb{V}\big(N_\star\rr{2}\big)=\mathbb{E}[(N_\star\rr{2})^2]-\big(\mathbb{E}[N_\star\rr{2}]\big)^2\leq Cn^{q+q_1},
\end{align*}
where $C$ is again redefined. Now, let $q_2\in(\tfrac{q+q_1}{2},q)$. Then
$$\mathbb{P}\big(\big|N_\star\rr{2}-\mathbb{E}[N_\star\rr{2}]\big|>n^{q_2}\big)\leq\frac{\mathbb{V}\big(N_\star\rr{2}\big)}{n^{2q_2}}\leq C\frac{n^{q+q_1}}{n^{2q_2}}\rightarrow 0\quad\text{as }n\rightarrow\infty.$$
    Therefore, using this and (\ref{eq mean N_star^2 lower bound}) we see
    $$\mathbb{P}\big(N_\star\rr{2}\geq n^q-Cn^{q_1}-n^{q_2}\big)\rightarrow 1\quad\text{as }n\rightarrow\infty$$
    and this along with (\ref{eq Chernoff on N_k^1}) gives
    \begin{align*}
    \mathbb{P}\big(N_\star\rr{1}+N_\star\rr{2}>n^q\big)\geq \mathbb{P}\big(N_\star\rr{1}\geq n^q-2n^{q_1},\;N_\star\rr{2}\geq n^q-Cn^{q_1}-n^{q_2}\big)\rightarrow 1\quad\text{as }n\rightarrow\infty,
    \end{align*}
    which finishes the proof.
\end{proof}

Now, we can prove our two remaining lemmas of this section.

\begin{proof}[Proof of Lemma \ref{lemma final lower 2}.]
Using Lemma \ref{lemma technical dist of M} we note that $p_0(n,l)\leq \tfrac{l^2}{4(n-1)^2}$, why
$$\mathbb{P}\big(N_0\rr{1}=0\big)\geq \bigg(1-\frac{n^{2q}}{4(n-1)^2}\bigg)^n\rightarrow 1,\quad\text{as }n\rightarrow\infty,$$
and
\begin{align*}
    \mathbb{P}\big(N_0\rr{2}=0\big)&\geq\mathbb{P}\big(N_0\rr{2}=0\big|\,N_1\rr{1}\leq \tfrac{1}{2}n^q\log{n} \big)\mathbb{P}\big(N_1\rr{1}\leq \tfrac{1}{2}n^q\log{n}\big)\\
    &\geq \bigg(1-\frac{\big(\tfrac{1}{2}n^q\log{n}\big)^2}{4(n-n^q-2)^2}\bigg)^{n}\mathbb{P}\big(N_1\rr{1}\leq \tfrac{1}{2}n^q\log{n}\big)\rightarrow 1\quad\text{as }n\rightarrow\infty,
\end{align*}
where we lastly used Lemma \ref{lemma final lower 1}. Now, using this Lemma again along with Lemma \ref{lemma 1-SAT} we get
\begin{equation*}
\begin{aligned}
    \mathbb{P}\big(\Phi_1\rr{1}\inSAT\big)&\geq \mathbb{P}\big(\Phi_1\rr{1}\inSAT\big|N_1\rr{1}\leq \tfrac{1}{2}n^q\log{n}\big)\mathbb{P}\big(N_1\rr{1}\leq \tfrac{1}{2}n^q\log{n}\big)\\
    &\geq \bigg(1-\frac{\tfrac{1}{2}n^q\log{n}}{n-n^q-2}\bigg)^{\tfrac{1}{2}n^q\log{n}}\mathbb{P}\big(N_1\rr{1}\leq \tfrac{1}{2}n^q\log{n}\big)\rightarrow 1\quad\text{as }n\rightarrow\infty.
\end{aligned}
\end{equation*}
A similar argument gives that
\begin{equation*}
    \begin{aligned}
        \mathbb{P}\big(\Phi_1\rr{2}\inSAT\big)&\geq \mathbb{P}\big(\Phi_1\rr{2}\inSAT\big|N_1\rr{r}\leq \tfrac{1}{2}n^q\log{n},\;r\in\{1,2\}\big)\mathbb{P}\big(N_1\rr{r}\leq \tfrac{1}{2}n^q\log{n},\;r\in\{1,2\}\big)\\
        &\geq \bigg(1-\frac{\tfrac{1}{2}n^q\log{n}}{n-n^q\log{n}}\bigg)^{\frac{1}{2}n^q\log{n}}\mathbb{P}\big(N_1\rr{r}\leq \tfrac{1}{2}n^q\log{n},\;r\in\{1,2\}\big)\rightarrow 1\quad\text{as }n\rightarrow\infty.
    \end{aligned}
\end{equation*}
\end{proof}
\begin{proof}[Proof of Lemma \ref{lemma final lower 3}]
    Remember that 
    $$\mathcal{L}\rr{2}=[n-T\rr{1}]\backslash[n-T\rr{2}]=[n-\lfloor n^q\rfloor -N_1\rr{1}]\backslash[n-\lfloor n^q\rfloor -N_1\rr{1}-N_1\rr{2}],$$ 
    and using that $N_2\rr{2}=n-\sum_{k\in\{0,1,\star\},r\in\{1,2\}}N_k\rr{r}$ we see
    $$\Psi_2\rr{2}|\mathcal{N}\rr{2}\sim F_2(n-T\rr{1},\;N_2\rr{2})=F_2(n-T\rr{1},\;n-T\rr{2}+\lfloor n^q\rfloor -N_0\rr{1}-N_\star\rr{1}-N_0\rr{2}-N_\star\rr{2}).$$
    Let the literals of $\Psi_2\rr{2}$ be given by $(L_{j,1},L_{j,2})$ for $j\in[N_2\rr{2}]$. If $n-T\rr{2}>N_2\rr{2}$ define additional random variables $(L_{j,1},L_{j,2})$ for $j\in[N_2\rr{2}+1,n-T\rr{2}]$, where conditional on $\mathcal{N}\rr{2}$ the pairs of random variables are independent and uniformly distributed in $\{(\ell_1,\ell_2)\in(\pm[n-T\rr{1}])^2:|\ell_1|<|\ell_2|\}$. Define
    $$\Phi'=\bigwedge_{j=1}^{n-T\rr{2}}(L_{j,1}\vee L_{j,2}),$$
    and let also $\mathcal{L}':=\mathcal{L}\rr{2}$. Then
    \begin{align*}
        \mathbb{P}\big((\Psi_2\rr{2})_{\mathcal{L}\rr{2}}\inSAT\big)\geq \mathbb{P}\big(\Phi'_{\mathcal{L}'}\inSAT,\;n-T\rr{2}\geq N_2\rr{2}\big)\geq \mathbb{P}\big(\Phi'_{\mathcal{L}'}\inSAT\big)+\mathbb{P}\big(n-T\rr{2}\geq N_2\rr{2}\big)-1.
    \end{align*}
    Note that
    $$\big\{n-T\rr{2}\geq N_2\rr{2}\big\}\supseteq \big\{N_\star\rr{1}+N_\star\rr{2}\geq n^q\big\},$$
    and thus this along with Lemma \ref{lemma final lower 1} implies that $\lim_{n\rightarrow\infty}\mathbb{P}\big(n-T\rr{2}\geq N_2\rr{2}\big)=1$. Thus 
    \begin{equation} \label{eq liminf first bound}
        \liminf_{n\rightarrow\infty}\mathbb{P}\big((\Psi_2\rr{2})_{\mathcal{L}\rr{2}}\inSAT\big)\geq\liminf_{n\rightarrow\infty}\mathbb{P}\big(\Phi'_{\mathcal{L}'}\inSAT\big).
    \end{equation}
    Define now further
    $$M_1\rr{-1}:=\lfloor n^q\rfloor+N_1\rr{1},\quad M_1\rr{0}:=N_1\rr{2},\quad\mathcal{M}\rr{0}:=\sigma\big(M_1\rr{-1},\,M_1\rr{0}\big).$$
    Note that as $\mathcal{M}\rr{0}\subseteq \mathcal{N}\rr{2}$ and
    $$\Phi'|\mathcal{N}\rr{2}\sim F_2\big(n-T\rr{1},\,n-T\rr{2}\big)=F_2\big(n-M_1\rr{-1},\,n-M_1\rr{-1}+M_1\rr{0}\big),$$
    we get that
    $$\Phi'|\mathcal{M}\rr{0}\sim F_2\big(n-M_1\rr{-1},\,M_1\rr{-1}+M_1\rr{0}\big)$$
    Moreover, the definitions imply that 
    $$\mathcal{L}'=[n-M_1\rr{-1}]\backslash[n-M_1\rr{-1}-M_1\rr{0}].$$
    Lemma \ref{lemma final lower 3} also gives that $\mathbb{E}[M_1\rr{0}]\leq C_0n^q$ for some $C_0>0$ and also that
    $$\lim_{n\rightarrow\infty}\mathbb{P}\big(M_1\rr{-1}\leq n^q\log{n}\big)=\lim_{n\rightarrow\infty}\mathbb{P}\big(M_1\rr{0}\leq n^q\log{n}\big)=1$$
    Thus, our defined elements satisfy all assumptions of Lemma \ref{lemma under-constrained alternative result}. Therefore
    $$\liminf_{n\rightarrow\infty}\mathbb{P}\big(\Phi'_{\mathcal{L}'}\inSAT\big)\geq \liminf_{n\rightarrow\infty}\mathbb{P}\big(\Phi\inSAT\big).$$
    Combining this with (\ref{eq liminf first bound}) establishes the lemma.
\end{proof}

\subsection{Proof of technical lemmas} \label{section proof 5}

We begin by proving Lemma \ref{lemma technical G} and this Lemma is established by a coupling argument where literals are swapped. The swapping will not change the distribution of the resulting formula as the clauses are uniformly distributed.
\begin{proof}[Proof of Lemma \ref{lemma technical G}]
Let $\varphi$ be a non-random SAT-formula with $n$ variables and $m$ clauses and let $(\ell_{j,i})_{j\in[m],i\in[2]}$ denote its literals. Write $\mathcal{L}=\{\ell_1,\dotsc,\ell_m\}$, where $|\ell_1|<\dotsb<|\ell_l|$ and let $\mathcal{L}_{abs}=\{|\ell_j|:j\in[f]\}$. Also let $0<\ell_{f+1}<\dotsc<\ell_{n}\in[n]$ be defined such that $\{|\ell_1|,\dotsc,|\ell_n|\}=[n]$. Define a function $\Gamma:[n]\rightarrow[n]$ by letting $\Gamma(|\ell_i|)=n-i$ for $i\in[n]$. Then $\Gamma$ is a permutation satisfying that $\Gamma(\mathcal{L}_{abs})=\mathcal{L}'$, where $\mathcal{L}'=[n]\backslash[n-f]$. Define another function $\theta:[n]\rightarrow\{\pm1\}$ where $\theta(|\ell_i|)=\sign(\ell_i)$. Define a new SAT-formula $\varphi'$ with literals $(\ell_{j,i}')_{j\in[m],i\in[2]}$, where
$$\{\ell_{j,1}',\ell_{j,2}'\big\}=\big\{\theta(|\ell_{j,i}|)\cdot \sign(\ell_{j,i})\cdot \Gamma(|\ell_{j,i}|):i\in[2]\big\}.$$
 Then we define $G(\varphi,\mathcal{L}):=\varphi'$. Let $x=(x_1,\dotsc,x_n)\in\mathbb{B}^{n}$ and define $x'=(x_1',\dotsc,x_n')\in\mathbb{B}^n$ by letting $x'_v=x_{\Gamma^{-1}(v)}$ for $v\in[n]$. Note that $x\mapsto x'$ is a bijection. Let for $j\in[m]$, $i\in[2]$ be chosen such that $\ell'_{j,1}=\theta(|\ell_{j,i}|)\cdot \text{sign}(\ell_{j,i})\cdot \Gamma(|\ell_{j,i}|)$. 
 
 Now, if $\ell_{j,i}\in \mathcal{L}$ then $(\ell_{j,i})_\mathcal{L}(x)=\texttt{true}$. Also, there exists a $v\in[f]$ such that $\ell_{j,i}=\ell_{v}$ and also $\Gamma(|\ell_{v}|)\in \mathcal{L}'$. Thus
 $$(l_{j,1}')_{\mathcal{L}'}(x')=\big[\theta(|\ell_{v}|)\cdot \sign(\ell_{v})\cdot \Gamma(|\ell_{v}|)\big]_{\mathcal{L}'}(x')=\Gamma(|\ell_{v}|)_{\mathcal{L}'}(x')=\texttt{true}.$$
 If $-\ell_{j,i}\in \mathcal{L}$ then $(\ell_{j,i})_\mathcal{L}(x)=\texttt{false}$. Again there exists $v\in[l]$ such that $\ell_{j,i}=-\ell_v$ and also $\Gamma(|\ell_v|)\in \mathcal{L}'$ and thus
$$(l_{j,1}')_{\mathcal{L}'}(x')=\big[\theta(|\ell_{v}|)\cdot \sign(-\ell_{v})\cdot \Gamma(|\ell_{v}|)\big]_{\mathcal{L}'}(x')=-\Gamma(|\ell_{v}|)_{\mathcal{L}'}(x')=\texttt{false}.$$
Lastly, if $\pm \ell_{j,i}\notin \mathcal{L}$ then $(\ell_{j,i})_\mathcal{L}(x)=\ell_{j,i}(x)$ and also $\pm\Gamma(|\ell_{j,i}|)\notin \mathcal{L}'$. Therefore
$$(l_{j,1}')_{\mathcal{L}'}(x')=\big[\sign(\ell_{j,i})\cdot \Gamma(|\ell_{j,i}|)\big](x')=\sign(\ell_{j,i})\cdot x'_{\Gamma(|\ell_{j,i}|)}=\sign(\ell_{j,i})\cdot x_{|\ell_{j,i}|}=\ell_{j,i}(x).$$
Repeating the argument on $\ell_{j,2}'$ implies that
$$(l_{j,1}\vee l_{j,2})_\mathcal{L}(x)=(\ell{j,1}'\vee l_{j,2}')_{\mathcal{L}'}(x'),$$
and thus $\varphi_\mathcal{L}\inSAT$ if and only if $\varphi'_{\mathcal{L}'}\inSAT$. 

Let $\Phi\sim F_2(n,m)$ and define $\Phi'=G(\Phi,L)$. Then the above argument implies that 
$$\big\{G(\Phi,L)\inSAT\big\}=\big\{\Phi'_{[n]\backslash[n-l]}\inSAT\big\}.$$
Let $(L_{j,i})_{j\in[m],i\in[2]}$ be the random literals of $\Phi$ and let $(L_{j,i}')_{j\in[m],i\in[2]}$ be the random literals of $\Phi'$. Note that as the clause $\big(L_{j,1}',L_{j,2}'\big)$ is constructed from $\big(L_{j,1},L_{j,2}\big)$ for each $j\in[m]$ the clauses of $\Phi'$ are independent. Let $j\in[m]$ and assume WLOG that $\Gamma(|L_{j,1}|)<\Gamma(|L_{j,2}|)$. Then, for $(\ell_1',\ell_2')\in(\pm[n])^2$ with $|\ell_1'|<|\ell_2'|$ we have
\begin{align*}
    \mathbb{P}\big(L_{j,i}'=\ell_i',\;i\in[2]\big)&=\mathbb{P}\Big(\theta(|L_{j,i}|)\cdot\sign(L_{j,i})\cdot\Gamma(|L_{j,i}|)=\ell_i',\;i\in[2]\Big)\\
    &=\mathbb{P}\Big(|L_{j,i}|=\Gamma^{-1}(|\ell_i'|),\;\sign(L_{j,i})=\theta\big(\Gamma^{-1}(|\ell_i'|)\big)\cdot\sign(\ell_i'),\;i\in[2]\Big)\\
    &=\mathbb{P}\big(L_{j,i}=\ell_i,\;i\in[2]\big),
\end{align*}
where $\ell_i=\theta\big(\Gamma^{-1}(\ell_i')\big)\cdot\sign(\ell_i')\cdot\Gamma^{-1}(|\ell_i'|)$ for $i\in[2]$ and as the clause $(L_{j,1},L_{j,2})$ is uniformly distributed, the result follows.  
\end{proof}
More or less direct calculations imply the next lemma. We recall that equation (\ref{eq technical A sets}) defines the sets
\begin{equation*} 
    \begin{aligned}
        \mathcal{A}_0(n,f):=-\mathcal{L}\times -\mathcal{L},\quad& \mathcal{A}_1(n,f):=\pm[n-f]\times -\mathcal{L},\\
    \mathcal{A}_2(n,f):=\pm[n-f]\times \pm[n-f],\quad& \mathcal{A}_\star(n,f):=\pm[n]\times \mathcal{L}.
    \end{aligned}
\end{equation*}
\begin{proof}[Proof of Lemma \ref{lemma technical dist of M}]
Let $\Phi\sim F_2(n,m)$ and let ${L}=(L_{j,i})_{j\in[m],i\in[2]}$ be its literals. As the clauses are i.i.d. and
$$M_k=\big|\big\{j\in [m]:(L_{j,1},L_{j,2})\in\mathcal{A}_k(n,f)\big\}\big|,\quad (k\in K),$$
where each clause belongs to exactly one of the sets $\mathcal{A}_k(n,f)$ for $k\in K$ this implies that 
$${M}:=(M_0,M_1,M_2,M_\star)\sim\text{Multinomial}\big(m,\;{p}(n,f)\big),$$
where ${p}=(p_0,p_1,p_2,p_\star)$ and $p_k(n,f)=\mathbb{P}\big((L_{1,1},L_{1,2})\in\mathcal{A}_k(n,l)\big)$ for $k\in K$. As the clauses are uniformly distributed on $\mathcal{D}=\{(\ell_1,\ell_2)\in(\pm[n])^2:|\ell_1|<|\ell_2|\}$ we further get that $p_k(n,l)=\frac{|\mathcal{A}_k(n,f)\cap\mathcal{D}|}{|\mathcal{D}|}$ for $k\in K$, so
\begin{align*}
    p_0(n,f)&=\frac{\binom{f}{2}}{2^2\binom{n}{2}}=\frac{f(f-1)}{4n(n-1)},\\
    p_1(n,f)&=\frac{2(n-f)f}{2^2\binom{n}{2}}=\frac{(n-f)f}{n(n-1)},\\
    p_2(n,f)&=\frac{2^2\binom{n-f}{2}}{2^2\binom{n}{2}}=\frac{(n-f)(n-f-1)}{n(n-1)},\\
    p_\star(n,f)&=1-p_0(n,f)-p_1(n,f)-p_2(n,f)=\frac{f(n-\tfrac{f}{4}-\tfrac{3}{4}))}{n(n-1)},
\end{align*}
We will need the following result to establish the last part of the lemma. For $X,Y$ independent random functions, sets $A,B$, and elements $x,y$ with $x\in A$ and $y\in B$ we have
$$\mathbb{P}\big(X=x,Y=y|X\in A,Y\in B\big)=\mathbb{P}\big(X=x|X\in A\big)\mathbb{P}\big(Y=y|Y\in B\big).$$
Define $\mathcal{C}_k=\big\{j\in[m]:(L_{j,1},L_{j,2})\in \mathcal{A}_k(n,f)\big\}$ for $k\in K$ and let ${\mathcal{C}}=(\mathcal{C}_k)_{k\in K}$. For elements $\ell_{j,1}\in\pm[n-f]$ for $j\in\mathcal{C}_1$ and $(\ell_{j,1},\ell_{j,2})\in\mathcal{A}_2(n,f)$ for $j\in\mathcal{C}_2$ we use the independence of the clauses and the above equation and get
\begin{equation} \label{eq proof Phi1 Phi2 (1)}
\begin{aligned}
    \mathbb{P}\Big(\Phi_1=\bigwedge_{j\in\mathcal{C}_1}(\ell_{j,1})\Big|\mathcal{C}\Big)&=\prod_{j\in\mathcal{C}_1}\mathbb{P}\big(L_{j,1}=\ell_{j,1}\big|j\in\mathcal{C}_1\big),\\
    \mathbb{P}\Big(\Phi_2=\bigwedge_{j\in\mathcal{C}_2}(\ell_{j,1}\vee \ell_{j,2})\Big|\mathcal{C}\Big)&=\prod_{j\in\mathcal{C}_2}\mathbb{P}\big((L_{j,1},L_{j,2})=(\ell_{j,1},\ell_{j,2})\big|j\in\mathcal{C}_2\big),
\end{aligned}
\end{equation}
and
\begin{equation} \label{eq proof Phi1 Phi2 (2)}
\begin{aligned}
    &\mathbb{P}\Big(\Phi_1=\bigwedge_{j\in\mathcal{C}_1}(\ell_{j,1}),\;\Phi_2=\bigwedge_{j\in\mathcal{C}_2}(\ell_{j,1}\vee \ell_{j,2})\Big|\mathcal{C}\Big)\\
    =&\prod_{j\in\mathcal{C}_1}\mathbb{P}\big(L_{j,1}=\ell_{j,1}|j\in\mathcal{C}_1\big)\prod_{j\in\mathcal{C}_2}\mathbb{P}\big((L_{j,1},L_{j,2})=(\ell_{j,1},\ell_{j,2})|j\in\mathcal{C}_2\big).
\end{aligned}
\end{equation}
Now, using that the clauses are uniformly distributed on $\mathcal{D}$ along with the definitions of the sets $\mathcal{A}_k(n,f)$, $k\in\{1,2\}$ we first get for $j\in\mathcal{C}_1$
\begin{align*}
    \mathbb{P}\big(L_{j,1}=\ell_{j,1}\big|j\in\mathcal{C}_1\big)&=\sum_{\ell_{j,2}\in-\mathcal{L}}\mathbb{P}\big((L_{j,1},L_{j,2})=(\ell_{j,1},\ell_{j,2})\big|j\in\mathcal{C}_1\big)\\
    &=\sum_{\ell_{j,2}\in-\mathcal{L}}\frac{\mathbb{P}\big((L_{j,1},L_{j,2})=(\ell_{j,1},\ell_{j,2})\big)}{\mathbb{P}\big((L_{j,1},L_{j,2})\in\mathcal{A}_1(n,f)\big)}=l\cdot \frac{\frac{1}{2^2\binom{n}{2}}}{p_1(n,f)}=\frac{1}{2\binom{n-f}{1}},
\end{align*}
and next for $j\in\mathcal{C}_2$
\begin{align*}
    \mathbb{P}\big((L_{j,1},L_{j,2})=(\ell_{j,1},\ell_{j,2})\big|j\in\mathcal{C}_2\big)=\frac{\mathbb{P}\big((L_{j,1},L_{j,2})=(\ell_{j,1},\ell_{j,2})\big)}{\mathbb{P}\big((L_{j,1},L_{j,2})\in\mathcal{A}_2(n,l)\big)}=\frac{\frac{1}{2^2\binom{n}{2}}}{p_2(n,f)}=\frac{1}{2^2\binom{n-f}{2}}.
\end{align*}
Inserting this in (\ref{eq proof Phi1 Phi2 (1)}) gives
\begin{align*}
    \mathbb{P}\Big(\Phi_1=\bigwedge_{j\in\mathcal{C}_1}(\ell_{j,1})\Big|\mathcal{C}\Big)=\bigg(\frac{1}{2\binom{n-f}{1}}\bigg)^{M_1}\quad\text{and}\quad \mathbb{P}\Big(\Phi_2=\bigwedge_{j\in\mathcal{C}_2}(\ell_{j,1}\vee \ell_{j,2})\Big| \mathcal{C}\Big)=\bigg(\frac{1}{2^2\binom{n-f}{2}}\bigg)^{M_2}.
\end{align*}
Thus for $\varphi_1$ a $1$-SAT formula with $n-l$ variables and $M_1$ clauses and $\varphi_2$ a $2$-SAT formula with $n-l$ variables and $M_2$ clauses we get
\begin{align*}
    \mathbb{P}\big(\Phi_1=\varphi_1\big|M\big)&=\mathbb{E}\big[\mathbb{P}\big(\Phi_1=\varphi_1\big|\mathcal{C}\big)\big|M\big]=\bigg(\frac{1}{2\binom{n-l}{1}}\bigg)^{M_1},\\
    \mathbb{P}\big(\Phi_2=\varphi_2\big|M\big)&=\mathbb{E}\big[\mathbb{P}\big(\Phi_2=\varphi_2\big|\mathcal{C}\big)\big|M\big]=\bigg(\frac{1}{2^2\binom{n-l}{2}}\bigg)^{M_2},
\end{align*}
and this corresponds to having $\Phi_k|M\sim F_k(n-l,M_k)$ for $k\in \{1,2\}$. Repeating this argument with equation  (\ref{eq proof Phi1 Phi2 (2)}) gives that
$$\mathbb{P}\big(\Phi_1=\varphi_1,\;\Phi_2=\varphi_2\big|M\big)=\mathbb{P}\big(\Phi_1=\varphi_1\big|M\big)\mathbb{P}\big(\Phi_2=\varphi_2\big|M\big)$$
which implies the conditional independence. 
\end{proof}

\bibliographystyle{alpha}
\bibliography{bib.bib}

\end{document}